\theoremstyle{plain}
\newtheorem{thm}{Theorem}[section]  
\newtheorem{lem}[thm]{Lemma}
\theoremstyle{definition}
\newtheorem{ex}{Example}[section]
\theoremstyle{remark}
\newtheorem{rem}{Remark}[section]
\markboth{\today}{\today}
\newcommand\DN{\newcommand}
\DN\lref[1]{Lemma~\ref{#1}}
\DN\tref[1]{Theorem~\ref{#1}}
\DN\pref[1]{Proposition~\ref{#1}}
\DN\sref[1]{Section~\ref{#1}}
\DN\dref[1]{Definition~\ref{#1}}
\DN\rref[1]{Remark~\ref{#1}} 
\DN\corref[1]{Corollary~\ref{#1}}
\DN\eref[1]{Example~\ref{#1}}
\numberwithin{equation}{section}
\newcounter{Const} \setcounter{Const}{0}
\DN\Ct[1]{\refstepcounter{Const}c_{\theConst}\label{#1}} %% isde-09の中の定義
\numberwithin{Const}{section}
\DN\cref[1]{c_{\ref{#1}}}
\DN\SSSm{S_m}
\DN\PP{\mathrm{P}} %[]%[] 09-11-17 変更
\DN\EE{\mathrm{E}}%[] 09-11-17 変更
\DN\R{\mathbb{R}}\DN\N{\mathbb{N}}
\DN\Q{\mathbb{Q}}\DN{\Z}{\mathbb{Z}}
\DN\limi[1]{\lim_{#1\to\infty}} 	
\DN\limz[1]{\lim_{#1\to0}}
\DN\limsupi[1]{\limsup_{#1\to\infty}} 	
\DN\limsupz[1]{\limsup_{#1\to0}}
\DN\liminfi[1]{\liminf_{#1\to\infty}} 	
\DN\liminfz[1]{\liminf_{#1\to0}}
\DN\sumii[1]{\sum_{#1=1}^{\infty}}
\DN\sumi[1]{\sum_{#1=0}^{\infty}}
\DN\map[3]{#1\!:\!#2\!\to\!#3}
\DN\ot{\otimes} 
\DN\ts{\!\times\!}\DN\PD[2]{\frac{\partial#1}{\partial#2}}
\DN\Rd{\R ^d}
\title{Tagged particle processes and their non-explosion criteria } 
\author{{Hirofumi Osada } } 
\DN\hsym{h _{\mathrm{sym}}}
\DN\Akr{\mathfrak{A}^{k}_{r}}
\DN\Akrr{\mathfrak{A}^{k}_{r+1}}
\DN\AkrN{\mathfrak{A}^{k}_{r,N}}
\DN\Srk{S _{r}^{k}}
\DN\Srr{S _{r}^{k}}
\DN\Sri{S _{r(i)}^{k}}
\DN\xsss{(x,\mathfrak{s})}
\DN\ysss{(y,\mathfrak{s})}
\DN\Cz{C^{\infty}_0 (\R^d)} %%
\DN\Czk{C^{\infty}_0 (\Rdk )} 
\DN\Czi{C([0,\infty); \R^d)} 
\DN\CD{\dione }
\DN\CY{C^{\infty} (\Rd )\ot\dY } 
\DN\CzY{C^{\infty}_0 (\Rd )\ot\dY } 
\DN\di{\mathcal{D}_{\circ}}
\DN\dione{\di ^{1}}
\DN\dik{\di ^{k}}
\DN\dnu{\mathcal{D}^{\nu }}
\DN\dnuk{\mathcal{D}^{\nuk }}
\DN\dnuik{\di ^{\nuk }}
\DN\dnurD{\mathcal{D}^{\nuk }_{r,\mathrm{D}}}
\DN\dnurDN{\mathcal{D}^{\nuk ,N}_{r,\mathrm{D}}}
\DN\dnui{\di ^{\nu }}
\DN\dnuirD{\dnuk _{\circ ,r,\mathrm{D}}}
\DN\dnuirkD{\mathcal{D}_{\circ ,r,\mathrm{D}}^{\nukr } }
\DN\dnuirkDN{\mathcal{D}_{\circ ,r,\mathrm{D}}^{\nukr ,N} }
\DN\dnuirDN{\mathcal{D}_{\circ ,r,\mathrm{D}}^{\nuk ,N} }
\DN\dmirD{\mathcal{D}_{\circ ,r,\mathrm{D}}^{\mu }}
\DN\dmirDN{\mathcal{D}_{\circ ,r,\mathrm{D}}^{\mu ,N}}
\DN\dmikrDN{\mathcal{D}_{\circ ,r,\mathrm{D}}^{\mukr ,N}}
\DN\dmi{\di ^{\mu }}
\DN\dmzi{\di ^{\mu _0}}
\DN\dm{\mathcal{D}^{\mu}}
\DN\dmrD{\mathcal{D}_{r,\mathrm{D}}^{\mu }}
\DN\dmrrD{\mathcal{D}_{r,\mathrm{D}}^{\murk }}
\DN\dmrDN{\mathcal{D}_{r,\mathrm{D}}^{\mu ,N}}
\DN\dmrrDN{\mathcal{D}_{r,\mathrm{D}}^{\murk ,N}}
\DN\dY{\mathcal{D}^{Y}}
\DN\dYi{\di ^{Y}}
\DN\dXY{\mathcal{D}^{XY}}
\DN\dXYi{\di ^{XY}}
\DN\Omegaik{\Omega _{i}^{a}}
\DN\Omegaione{\Omega _{i}^{1}}
\DN\Omegaiz{\Omega _{i}^{0}}
\DN\Omegaionestar{\Omega _{i*}^{1}}
\DN\Omegaizstar{\Omega _{i*}^{0}}
\DN\Omegak{\Omega _{\infty }^{a}}
\DN\Omegaone{\Omega _{\infty }^{1}}
\DN\Omegaz{\Omega _{\infty }^{0}}
\DN\Omegaiastar{\Omega _{i*}^{a}}
\DN\Omegaa{\Omega _{\infty }^{a}}
\DN\mur{\mu _{r}}
\DN\murN{\mu _{r}^{N}}
\DN\murk{\mu _{r}^{k}}
\DN\nur{\nu _{r}}
\DN\nurk{\nu _{r}^{k}}
\DN\murkN{\mu _{r}^{k,N}}
\DN\nurN{\nu _{r}^{k,N}}
\DN\muz{\mu _{0}}
\DN\dxmu{dx \ts \mu }
\DN\mux{\mu _{\mathbf{x} }}
\DN\muk{\mu ^{{k}}}
\DN\mukr{\mu ^{{k}}_r}
\DN\nuk{\nu ^{{k}}}
\DN\nukr{\nu ^{{k}}_r}
\DN\Lm{L^2(\mu )}
\DN\Lmr{L^2(\murk )}
\DN\Lmz{L^2(\muz )}
\DN\LXY{L^2(\0 )}
\DN\LXr{L^2(\R^d\!\otimes\!\mathfrak{S},\rho^2 dx\!\times\!\mu)}
\DN\Lnu{L^2(\nu )}
\DN\Lnuk{L^2(\nuk )}
\DN\Lnuz{L^2(\0 )}
\DN\RdT{\Rd \ts \mathfrak{S}}
\DN\zaY{\PP _{\vartheta _{x}(\mathfrak{s}-\delta _x)}^{Y}}
\DN\zXY{\PP ^{XY}_{(x,\vartheta _{x}(\mathfrak{s}-\delta _x))}}
\DN\szir{\bar{\sigma }^{0}_{i}}
\DN\soir{\bar{\sigma }^{1}_{i}}
\DN\sojr{\bar{\sigma }^{1}_{j}}
\DN\sair{\bar{\sigma }^{a}_{i}}
\DN\sziir{\bar{\sigma }^{0}_{i-1}}
\DN\soiir{\bar{\sigma }^{1}_{i-1}}
\DN\szr{\sigma ^{0}_{r}}
\DN\sor{\sigma ^{1}_{r}}
\DN\szrN{\sigma ^{0}_{r,N}}
\DN\sorN{\sigma ^{1}_{r,N}}
\DN\tauzrx{\tau ^{0}_{r,x}}
\DN\tauor{\tau ^{1}_{r,x}}
\DN\E{\mathcal{E}}
\DN\Enu{\E ^{\nu }}
\DN\Enuk{\E ^{\nuk }}
\DN\Enukr{\E ^{\nukr }}
\DN\Enur{\Enu _{r}}
\DN\Em{\E ^{\mu }}
\DN\DDD{\mathbb{D}}
\DN\D{\mathbf{D}}
\DN\PPY{\PP ^{Y}}
\DN\EY{\E ^{Y}}
\DN\Ya{(\EY ,\dY )}
\DN\Xa{(\EXY ,\dXY )}
\DN\TY{T_{Y,t}}
\DN\xx{\nabla }%generator for x-direction
\DN\Ext{\EE_{(x,\mathfrak{s}) }^{\nu}}
\DN\ExtXY{\EE _{(x,\mathfrak{s}) }^{XY}}
\DN\Pxt{\PP _{(x,\mathfrak{s}) }}
\DN\Pxtnu{\PP _{(x,\mathfrak{s}) }^{\nuk }}
\DN\Pm{\PP ^{\mu}}
\DN\Pmt{\Pm _{\mathfrak{s}}}
\DN\Done{\mathbb{D}^{1}}
\DN\EXY{\mathcal{E}^{XY}}
\DN\DXY{\mathbb{D}^{XY}}
\DN\DY{\mathbb{D}^{Y}}
\begin{document}  \maketitle 
\begin{flushright}
\footnote{{\bf Address}:  
Faculty of Mathematics, Kyushu University, 
\\
Fukuoka 812-8581, JAPAN \\
{\bf E-mail}: osada@math.kyushu-u.ac.jp \\ 
{\bf Phone and Fax}: 0081-92-642-4349 \\
{\bf MSC 2000 subj. class.  60J60, 60K35, 82B21, 82C22}:  \\
{\bf Key words}: interacting Brownian particles, infinitely dimensional diffusions, infinitely many particle systems, Dirichlet forms. }
\end{flushright}

\begin{abstract}
We give a derivation of tagged particle processes from unlabeled interacting Brownian motions. We give a criteria 
of the non-explosion property of tagged particle processes. 
We prove the quasi-regularity of Dirichlet forms describing the environment seen from the tagged particle, which were used in previous papers 
to prove the invariance principle of tagged particles of interacting Brownian motions.
\end{abstract}
\section{ Introduction }\label{s:1}
 
Interacting Brownian motions (IBMs) in infinite dimensions 
 are diffusions $ \mathbf{X}_t = (X_t^i)_{i\in \Z} $ consisting of infinitely many particles moving in $ \Rd $ with  the effect of the external force coming from a self potential $ \map{\Phi }{\Rd }{\R \cup \{ \infty \}}$ and that of the mutual interaction coming from an interacting potential $ \map{\Psi }{\Rd \ts \Rd }{\R \cup \{ \infty \}} $ such that $ \Psi (x,y) = \Psi (y,x)$. 

Intuitively, IBMs are described by the infinitely dimensional 
stochastic differential equation (SDE) of the form 
\begin{align} \label{:11} 
& dX_t^i = dB_t^i - \frac{1}{2} \nabla \Phi (X_t^i) dt 
-\frac{1}{2} 
\sum _{j \in \Z , j\not =  i } \nabla \Psi (X_t^i , X_t^j ) dt 
\quad (i \in \Z )
.\end{align}
The state space of the process 
$\mathbf{X}_t = (X_t^i)_{i\in \Z}$ is $(\Rd )^{\Z}$ by construction. 
Let $\mathfrak{X}$ be the configuration valued process given by 
\begin{align} \label{:12} 
& \mathfrak{X}_t = \sum _{i\in \Z } \delta _{X_t^i}
.\end{align}
Here $ \delta _a $ denotes the delta measure at $ a $ and 
a configuration is a Radon measure consisting of 
a sum of delta measures. 
We call $ \mathbf{X}$ the labeled dynamics and 
$ \mathfrak{X}$ the unlabeled dynamics.

The SDE \eqref{:11} was initiated by Lang \cite{La1}, \cite{La2}. 
He studied the case $ \Phi = 0 $, and 
$ \Psi (x,y) = \Psi (x-y) $, $ \Psi $ 
is of $ C^3_0(\Rd ) $, superstable and regular 
in the sense of Ruelle \cite{ruelle2}. 
With the last two assumptions, 
the corresponding unlabeled dynamics 
$ \mathfrak{X}$ has Gibbsian equilibrium states. 
See \cite{shiga}, \cite{Fr} and \cite{T2} 
for other works concerning on the SDE \eqref{:11}. 

In \cite{o.dfa} the unlabeled diffusion was constructed by the 
Dirichlet form approach. This method gives a general and simple proof of construction, and allows us to apply singular interaction potentials such as Lennard-Jones 6-12 potential, hard core potential and so on. 
See \cite{yoshida}, \cite{ark} \cite{tane.udf}, 
and \cite{yuu.05} for other works concerning 
on the Dirichlet form approach to IBMs.

In this paper we are interested in the property 
of each labeled particle of the unlabeled particle system given by the Dirichlet form. 
Such labeled particles are called tagged particles. By construction the unlabeled IBMs $ \mathfrak{X} $ 
are conservative since 
they have invariant probability measures and 
their state spaces are 
equipped with the vague topology. 
However, each labeled particle may explode 
under the Euclidean metric on $ \Rd $ in general. 
The first purpose of the paper is 
to give a criteria for the 
non-explosion  of the labeled particles 
(\tref{l:25}).

Let us next assume the total system is translation invariant in space. More precisely, we assume 
the stationary measure $ \mu $ 
and the energy form $ \Em $ 
of the Dirichlet space are translation invariant. 
Then the process $ \mathfrak{X}$ starting from $ \mu $ is 
translation invariant in space. 
The above assumption means, for Ruelle's class potentials \cite{ruelle2}, 
$ \Phi =0 $ and $ \Psi (x,y)= \Psi (x-y) $. 

This type of infinite-dimensional diffusions 
has been studied by the motivation 
from the statistical physics. 
One of the archetypical problem in this field is 
to investigate the large time property (the diffusive scaling limit, say) of tagged particles in the stationary system. This problem was solved for the simple exclusion process, 
which is a lattice analog of 
the hard core Brownian balls, 
by Kipnis-Varadhan \cite{KV}. 
For this they establish the celebrated 
Kipnis-Varadhan invariance principle. 

As for the tagged particle problem of IBMs, 
Guo \cite{guo}, Guo-Papanicolau \cite{gp} 
initiate the problem. 
Later De Masi {\em et al} \cite{De} study the problem for IBMs 
by using the Kipnis-Varadhan invariance principle. 
In \cite{o.inv2}, we convert the Kipnis-Varadhan invariance principle to the Dirichlet form theory. 
As a result, we weaken the assumption on the $ L^2 $-integrability of the mean forward velocity. 
This enables us to apply the invariance principle to hard core Brownian balls \cite{o.inv2} 
and \cite{o.p}. 

In \cite{o.inv2} we consider Dirichlet forms describing the tagged particle process and the environment process. 
These two Dirichlet forms are necessary to apply 
the Kipnis-Varadhan theory to IBMs. 
Although we gave the out line of the proof 
of the quasi-regularity of these Dirichlet forms and the relation between these two processes and the original unlabeled diffusion, the details were postponed. The second purpose of the paper is to give these details 
(Theorems \ref{l:24}, \ref{l:26} and \ref{l:27}).  

We establish the quasi-regularity of the Dirichlet forms of 
$ k $-labeled dynamics (\lref{l:23}) 
and prove the identity between $ k $-labeled dynamics  
and additive functionals of unlabeled dynamics (\tref{l:24}). 
The $ 0 $-labeled dynamics are simply the unlabeled dynamics; the $ k $-labeled dynamics are the processes of the form 
$ (X_t^{1},\ldots,X_t^{k},\sum_{j\not= 1,\ldots,k}\delta _{X_t^{j}}) $. 
The quasi-regularity of the Dirichlet form of 
the $ 0 $-labeled dynamics has been already proved in \cite{o.dfa}. 
Although \lref{l:23} is a straightforward generalization of it, 
we give a proof here for reader's convenience. 
On the other hand, the proof of \tref{l:24} is 
complicated because there is no simple transformation between Dirichlet spaces 
of the $ 0 $-labeled dynamics and the $ k $-labeled dynamics. 
\tref{l:24} plays an important role not only in the present paper but also in \cite{o.sde}. In \cite{o.sde} \tref{l:24} is used to solve 
the infinite-dimensional SDE \eqref{:11} describing IBMs.  

The organization of the paper is as follows:
In \sref{s:2} we give a set up and main results. 
In \sref{s:3} we introduce a transformation of Dirichlet spaces. 
In \sref{s:4} we prove the identity between 
unlabeled dynamics and the labeled dynamics (\tref{l:24}). 
In \sref{s:5} we prove the quasi-regularity of tagged particle processes and environment processes (\tref{l:26} and \tref{l:27}). 
In \sref{s:6} we study a non-explosion criteria and prove \tref{l:25}. 
In \sref{s:7} we prove the quasi-regularity of Dirichlet forms describing the $ k $-labeled and other unlabeled particles.

\section{ Set up and main results } 
\label{s:2}
Let $ {S}$ be a connected closed set in $ \Rd $ such that 
$ {S}= \overline{({S}^{\mathrm{int}})} $; that is, 
$ {S}$ coincides with the closure of the open kernel of $ {S}$. 
Let $ \mathfrak{S}$ be the set of the configurations on $ {S}$, that is, 
\begin{align}\label{:20}
\mathfrak{S} = \{ \mathfrak{s} = \sum _i \delta _{s_i}\, ;\, 
& \ \mathfrak{s}(K )< \infty \text{ for all compact sets } 
K \subset {S}\} 
.\end{align}
We endow $ \mathfrak{S}$ with the vague topology. 
Then $ \mathfrak{S}$ becomes a Polish space because $ {S}$ is a Polish space 
(see \cite{resnick}). 
Let $ \mu $ be a probability measure on $ (\mathfrak{S},\mathcal{B}({S}) )$. 

We say a non-negative permutation invariant function 
$ \rho ^n $ on $ {S}^{k}$ is the $ n $-correlation function of $ \mu $ if 
\begin{align}\label{:rm}&
\int_{A_1^{k_1}\ts \cdots \ts A_m^{k_m}} 
\rho ^n (x_1,\ldots,x_n) dx_1\cdots  dx_n 
= \int _{\mathfrak{S}} \prod _{i=1}^{m} 
\frac{\mathfrak{s}  (A_i) ! }
{(\mathfrak{s}  (A_i) - k_i )!} d\mu
 \end{align}
for any sequence of disjoint bounded measurable subsets $ A_1,\ldots,A_m \subset {S}$ and a sequence of natural numbers 
$ k_1,\ldots,k_m $ satisfying $ k_1+\cdots + k_m = n $. 

For a subset $ A \subset {S}$ we define the map 
$ \map{\pi _{A }}{\mathfrak{S}}{\mathfrak{S}} $ by 
$ \pi _{A } (\mathfrak{s} ) = 
\mathfrak{s}( A \cap \cdot )  $. 
We say a function $ \map{f}{\mathfrak{S}}{\R } $ 
is local if $ f $ is $ \sigma[\pi _{ A }]$-measurable 
for some compact set $ A \subset {S}$. 
We say $ f $ is smooth if $ \tilde{f} $ is smooth, 
where $ \tilde{f}((s_i)) $ is the permutation invariant function in $ (s_i) $ such that 
$ f (\mathfrak{s} ) = \tilde{f} ((s_i)) $ for $ \mathfrak{s} = \sum _i \delta _{s_i} $. 

Let $ \di $  be the set of all local, smooth functions on $ \mathfrak{S}$. 
For $ f,g \in \di $ we set 
$ \map{\DDD [f,g]}{\mathfrak{S}}{\R } $ by 
\begin{align} \label{:20z} & 
\DDD [f,g](\mathfrak{s}) = 
\frac{1}{2} 
\sum _{ i } 
(\nabla _{s_i} \widetilde{f},
\nabla _{s_i} \widetilde{g})_{\Rd }
.\end{align}
Here 
$ \nabla _{s_i} =
(\PD{}{s_{i1}},\ldots,\PD{}{s_{id}})$ and 
$ s_i = (s_{i1},\ldots,s_{id}) \in {S}$ and 
$ \mathfrak{s} = \sum _i \delta _{s_i} $. Moreover, 
$ (\ , \ )_{\Rd } $ is 
the standard inner product of $ \Rd $.  
For given $ f $ and $ g $ in $ \di $, 
it is easy to see that the right hand side 
depends only on $ \mathfrak{s}$. 
So $ \DDD [f,g]$ is well defined. %

Let $ \Lm = L^2(\mathfrak{S}, \mu )$. 
We consider the bilinear form $ (\Em , \dmi ) $ defined by  
\begin{align}\label{:20a} &
\Em (f,g) = \int _{\mathfrak{S}}  \DDD [f,g] d\mu ,\quad 
\\ \label{:20b} &
\dmi = \{ f \in \di \cap \Lm ; \ \Em (f,f)< \infty  \} 
.\end{align}
We now assume \\
\thetag{M.1.0} 
$ \3 $ is closable on $ \Lm $. 
\\
\thetag{M.2} 
The $ n $-correlation function $ \rho ^n $ 
of $ \mu $ is locally bounded for all $ n $. 

We collect some known results. 
\begin{lem}[\cite{o.dfa}] \label{l:20} 
Assume \thetag{M.1.0} and \thetag{M.2}. 
Let $ \4 $ be the closure of $ \3 $ on $ \Lm $. 
Then we have the following. \\
\thetag{1} $ \7 $ is 
a quasi-regular Dirichlet space. 
\\
\thetag{2} There exists a diffusion 
$\PP ^{\mu }=(\{\PP ^{\mu }_{\mathfrak{s}}\}_{\mathfrak{s}\in \mathfrak{S}},\mathfrak{X})$ 
associated with 
$ \7 $. \\
\thetag{3} The diffusion $\PP ^{\mu }$ is reversible 
with respect to $ \mu $. 
\end{lem}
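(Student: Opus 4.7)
Since \lref{l:20} is attributed to \cite{o.dfa}, the plan is to recapitulate the structural argument there. The first step is to upgrade the closable form supplied by (M.1.0) to a symmetric Dirichlet form: take the closure $\4$ of $\3$ in $\Lm$; the Markov property of the closure follows because the carré-du-champ \eqref{:20z} is a sum of gradient-squared terms, so any normal contraction $\phi$ applied to $f\in\dmi$ satisfies $\DDD[\phi\circ f,\phi\circ f]\le\DDD[f,f]$ pointwise, and this property passes to the closure. Symmetry is built in, and the assumption $\dmi\subset\Lm$ ensures the form is densely defined. Thus $\4$ is a symmetric Dirichlet form on $\Lm$.

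The heart of the proof is quasi-regularity. I would follow the standard Ma--R\"ockner program for configuration spaces. First, fix an increasing exhaustion $\{K_r\}$ of $S$ by compact sets and consider the cylinder functions $\pi_{K_r}$. The particle-number function $\mathfrak{s}\mapsto\mathfrak{s}(K_r)$ is $\mu$-integrable by (M.2) (the one-correlation bound), and using a suitable smoothing one can build cutoff functions $\chi_{r,N}\in\dmi$ with $\chi_{r,N}=1$ on $\{\mathfrak{s}(K_r)\le N\}$ whose energies are controlled by the local boundedness of the second correlation function $\rho^2$. The resulting level sets $F_{r,N}=\{\mathfrak{s}(K_r)\le N\}$ are compact in the vague topology (by Prokhorov-type arguments for $\mathfrak{S}$) and their complements have capacity tending to zero; concatenating $(F_{r,N_r})$ diagonally yields an $\E^\mu$-nest of compact sets. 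The second requirement, a countable $\dmi$-core separating the points of $\mathfrak{S}$ quasi-everywhere off this nest, is obtained from a countable family of smooth local functions of the form $f_{\varphi,k}(\mathfrak{s})=\langle\varphi,\mathfrak{s}\rangle^k$ with $\varphi\in\Cz$ running through a countable dense set; these separate configurations on compact sets, hence on each $F_{r,N}$.

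Granting quasi-regularity, part (2) follows from the general Ma--R\"ockner correspondence: every quasi-regular symmetric Dirichlet form on a Polish space admits a properly associated $\mu$-symmetric Hunt process. To promote the Hunt process to a diffusion (continuous paths), I would verify the local property of $(\E^\mu,\dm)$: for $f,g\in\dmi$ with disjoint supports in $\mathfrak{S}$, $\DDD[f,g]\equiv0$ by inspection of \eqref{:20z}, and the local property extends to the closure. Part (3), $\mu$-reversibility, is then automatic from the symmetry of $\E^\mu$ together with the $\mu$-symmetry of the associated semigroup.

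I expect the decisive obstacle to be the $\E^\mu$-nest construction, specifically obtaining the energy estimates on the cutoff functions $\chi_{r,N}$: one must show that the Dirichlet energy of these cutoffs (which contain derivatives of indicator-like functions in the particle count) can be made summable, and this is precisely where the local bound on $\rho^2$ from (M.2) is consumed. The separating family and Markov/local property are routine by comparison, and part (3) is essentially formal once (1)--(2) are established.
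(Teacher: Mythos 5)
Your proposal is a faithful reconstruction of the argument that the paper itself merely cites: the paper's proof of \thetag{1} just invokes \cite[Theorem 1]{o.dfa} (remarking that the generalization from $S=\Rd$ to general $S$ is easy), part \thetag{2} is deduced from \thetag{1} via the general Ma--R\"ockner correspondence \cite{mr}, and \thetag{3} from the $\mu$-symmetry of the form together with $\mu$ being an invariant probability. Your outline matches the structure made explicit later in \lref{l:71}, where the compact nest $\{K_n\}$ in $\mathfrak{S}$ with $\cup_n\di(K_n)$ dense and the countable separating family from $\dmi$ are recorded as the content of \cite{o.dfa}.

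One wording correction worth flagging: the individual sets $F_{r,N}=\{\mathfrak{s}:\mathfrak{s}(K_r)\le N\}$ are only \emph{closed} in the vague topology, not compact, since relative compactness in $\mathfrak{S}$ requires a uniform bound on $\mathfrak{s}(K)$ for \emph{every} compact $K\subset S$, not just on one fixed $K_r$. Your ``diagonal concatenation'' is therefore doing essential work rather than being cosmetic: the nest must consist of sets like $G_m=\bigcap_{r\le m}F_{r,N_r}$ (with $N_r$ growing and chosen via the cutoff-energy estimates controlled by $\rho^1,\rho^2$ so that $\mathrm{Cap}^{\mu }(G_m^c)\to 0$), and only these intersections are compact. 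With that adjustment the plan is sound and follows the cited construction.
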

\begin{proof}
\thetag{1} follows from \cite[Theorem 1]{o.dfa}. 
In \cite[Theorem 1]{o.dfa} we assume 
$ {S}= \Rd $; the generalization to the present case 
is easy. 
\thetag{2} follows from \thetag{1} and the general theory of Dirichlet forms \cite{mr}. \thetag{3} is clear because $ \PP ^{\mu }$ has 
an invariant probability measure $ \mu $ and 
the Dirichlet form $ \4 $ is $ \mu $-symmetric. 
\end{proof}

Let $ \mathrm{Cap}^{\mu } $ denote the capacity 
associated with the Dirichlet space $ \7 $. 
We refer to \cite[64 p.]{fot} for the definition 
of the capacity. 
We remark that the diffusion $ \PP ^{\mu }$ in \lref{l:20} \thetag{2} 
is unique up to quasi everywhere (q.e.). Namely, if 
$\hat{\PP }^{\mu }  = 
(\{\hat{\PP }^{\mu }_{\mathfrak{s}} \}_{\mathfrak{s}\in \mathfrak{S}},
\mathfrak{X})$ 
is another diffusion associated with 
$ \7 $, then there exists a set $ \hat{\mathfrak{S}}$ such that 
$ \mathrm{Cap}^{\mu } (\hat{\mathfrak{S}}^c) = 0 $ and that 
$ \PP ^{\mu }_{\mathfrak{s}}= \hat{\PP }^{\mu }_{\mathfrak{s}} $ 
for all $ \mathfrak{s}\in \hat{\mathfrak{S}} $.

We assume: 

\medskip
\noindent 
\thetag{M.3} 
$ \mathrm{Cap}^{\mu } 
(\mathfrak{S}_{\mathrm{single}}^c )=0  $. 

\medskip 
\noindent 
Here 
$ \mathfrak{S}_{\mathrm{single}}=
\{ \mathfrak{s}\in \mathfrak{S}\, ;\, 
\mathfrak{s}(x)\le 1 \text{ for all }x \in {S},\ 
\mathfrak{s}(x)\not= 0 \text{ for some }x \in {S}
\}
$. 

\medskip
\begin{lem} \label{l:22}
Assume \thetag{M.1.0}, \thetag{M.2}, and 
\thetag{M.3}. Then there exists 
a subset $ \hat{\mathfrak{S}}_{\mathrm{single}} $ 
such that 
\begin{align} \label{:22g}&
\hat{\mathfrak{S}}_{\mathrm{single}} \subset 
\mathfrak{S}_{\mathrm{single}}
,\\\label{:22a}&
\mathrm{Cap}^{\mu } 
(\hat{\mathfrak{S}}_{\mathrm{single}}^c )=0 
,\\ \label{:22d} &
\PP ^{\mu }_{\mathfrak{s}}(\mathfrak{X}_t\in 
\hat{\mathfrak{S}}_{\mathrm{single}} 
\text{ for all } t)= 1 
\quad \text{ for all } 
\mathfrak{s}\in \hat{\mathfrak{S}}_{\mathrm{single}} 
.\end{align}
\end{lem}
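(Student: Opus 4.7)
The plan is to reduce the statement to the standard construction of a properly exceptional set from a set of zero capacity, applied to the capacity-zero set $\mathfrak{S}\setminus\mathfrak{S}_{\mathrm{single}}$ given by hypothesis \thetag{M.3}. Observe first that $\mathfrak{S}_{\mathrm{single}}$ is Borel in $(\mathfrak{S},\mathcal{B}(\mathfrak{S}))$: the simplicity condition $\mathfrak{s}(\{x\})\le1$ cuts out a $G_{\delta}$ set by a standard argument based on countable refinements of relatively compact covers of $S$, and the non-emptiness condition is clearly Borel. Hence $N_0:=\mathfrak{S}\setminus\mathfrak{S}_{\mathrm{single}}$ is a Borel subset of $\mathfrak{S}$ with $\mathrm{Cap}^{\mu}(N_0)=0$.

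Next I would invoke the theorem from the theory of quasi-regular Dirichlet forms (see Ma--R\"ockner, Theorem IV.5.30; or the analogue in \cite{fot}) that every Borel set of zero capacity is contained in a Borel properly exceptional set: there is a Borel set $B\subset\mathfrak{S}$ with $N_0\subset B$, $\mathrm{Cap}^{\mu}(B)=0$, and
\begin{align*}
\PP^{\mu}_{\mathfrak{s}}\bigl(\mathfrak{X}_t\in\mathfrak{S}\setminus B\ \text{ for all } t\ge 0\bigr)=1\quad\text{for every }\mathfrak{s}\in\mathfrak{S}\setminus B.
\end{align*}
Setting $\hat{\mathfrak{S}}_{\mathrm{single}}:=\mathfrak{S}\setminus B$, we immediately get \eqref{:22g} because $B\supset N_0$, \eqref{:22a} because $\mathrm{Cap}^{\mu}(B)=0$, and \eqref{:22d} because the defining property of the properly exceptional set $B$ is precisely that the process starting from outside $B$ never enters $B$. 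The freedom to replace the a priori q.e.-defined diffusion by a version adapted to this enlarged exceptional set is guaranteed by the q.e.-uniqueness remark following Lemma~\ref{l:20}.

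The main obstacle is really only bookkeeping: one has to verify that the proper-exceptional-set theorem applies in the present quasi-regular, non-locally-compact setting on the Polish space $\mathfrak{S}$. The construction in Ma--R\"ockner goes through verbatim because Lemma~\ref{l:20}\thetag{1}--\thetag{2} gives us a quasi-regular Dirichlet form on a Polish space together with its associated $\mu$-symmetric diffusion, which is all the ingredients needed. No further information about the potentials $\Phi,\Psi$ or the correlation functions enters beyond what is already encoded in \thetag{M.1.0}, \thetag{M.2}, and the capacity bound \thetag{M.3}.
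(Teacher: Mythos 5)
Your proposal is correct and takes essentially the same route as the paper: both arguments invoke the standard fact from the theory of quasi-regular Dirichlet forms that a Borel set of zero capacity is contained in a Borel properly exceptional set, and both then define $\hat{\mathfrak{S}}_{\mathrm{single}}$ as the complement of that set. Your version is simply more explicit in naming the Ma--R\"ockner theorem and in noting the Borel measurability of $\mathfrak{S}_{\mathrm{single}}$, details the paper compresses into ``by the general theory of Dirichlet forms'' plus ``taking a suitable version of $\PP^{\mu}$.''
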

\begin{proof}
By \thetag{M.3} and the general theory of Dirichlet forms we have 
\begin{align*}&
\PP ^{\mu }_{\mathfrak{s}}(\mathfrak{X}_t\in 
\mathfrak{S}_{\mathrm{single}} 
\text{ for all } t)= 1 
\quad \text{ for q.e.\ } 
\mathfrak{s}\in \mathfrak{S}_{\mathrm{single}} 
.\end{align*}
Hence by taking a suitable version of 
$ \PP ^{\mu }$ we get a subset 
$ \hat{\mathfrak{S}}_{\mathrm{single}} $ 
satisfying \eqref{:22g}, \eqref{:22a}, and \eqref{:22d}.  
\end{proof}

We now introduce Dirichlet forms describing 
$ k $-labeled dynamics. 
For this we recall the definition of Palm measures. 
Let $  x =(x_1,\ldots,x_k)\in {S}^{k}$. 
We set 
\begin{align}\label{:23a}&
\mu _{ x  } = 
\mu (\cdot - \sum_{i=1}^{k} \delta _{x_i} | \ 
\mathfrak{s}( x_i )\ge 1 \text{ for }i=1,\ldots,k)
.\end{align}
Let $ \nuk $ be the measure defined by 
\begin{align}\label{:23b}& 
\nuk = \mu _{ x } \rho ^{k}( x ) 
d x  
.\end{align}
Here $ \map{\rho ^{k}}{{S}^{k}}{\R ^{+}} $ 
is the $ k $-correlation function of $ \mu $ 
as before,  and $ d x =dx_1\cdots dx_k  $ 
is the Lebesgue measure on $ {S}^{k} $. 
We set $ \nu = \nu ^{1} $ when $ k=1 $.  

Let $ \dik = C_0^{\infty}({S}^{k})\ot\di $. 
For $ f,g \in \dik $ 
let $ \nabla ^k[f,g] $ be such that 
\begin{align}\label{:23c}&
\nabla ^k[f ,g ] ( x ,\mathfrak{s})
= 
\frac{1}{2} \sum _{i=1}^{k} (
\nabla _{x_i} f ( x  , \mathfrak{s}),%\cdot 
\nabla _{x_i} g ( x  , \mathfrak{s}) )_{\Rd }
.\end{align}
where 
$ \nabla _{x_i} =
(\PD{}{x_{i1}},\ldots,\PD{}{x_{id}})$ and 
$  x =(x_1,\ldots,x_k)\in {S}^{k}$. 
We set $ \DDD ^{k} $ by 
\begin{align}\label{:23d}&   
\DDD ^{k}[f ,g ]( x  ,\mathfrak{s}) = 
\nabla ^k[f ,g ] ( x ,\mathfrak{s})
+
\DDD [f ( x ,\cdot ),g ( x ,\cdot )](\mathfrak{s})
.\end{align}

Let $ \Lnuk = L^2({S}^{k} \ts \mathfrak{S},\nuk ) $. 
We set $ (\Enuk ,\dnuik )$ by replacing $ \DDD $, 
$ \mu $ and $ \di $ in \eqref{:20a} 
and \eqref{:20b} with $ \DDD ^{k} $, $ \nuk $ and 
$ \dik $, respectively. 
For $ k\in \N $ we consider the assumption analogous to \thetag{M.1.0}. 

\medskip
\noindent 
\thetag{M.1.$ k $} \quad 
$ (\Enuk ,\dnuik )$ is closable on $ \Lnuk $. 

\medskip 

\begin{lem}\label{l:23}
Assume \thetag{M.1.0}, \thetag{M.1.$ k $}, and 
\thetag{M.2}. Let $ (\Enuk ,\dnuk )$ be 
the closure of $ (\Enuk ,\dnuik )$ on $ \Lnuk $. 
Then $ (\Enuk ,\dnuk )$ is 
a quasi-regular Dirichlet form on $ \Lnuk $. 
\end{lem}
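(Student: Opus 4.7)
The plan is to verify directly the three conditions for quasi-regularity of \cite{mr}, mimicking the strategy of \cite[Thm.~1]{o.dfa} for the $0$-labeled case. Since \thetag{M.1.$k$} already supplies closability, only (i) the existence of a compact $\Enuk$-nest, (ii) a dense set of $\Enuk$-quasi-continuous elements in $\dnuk$, and (iii) a countable family separating points quasi-everywhere remain to be constructed.

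First I build the nest. Fix an exhaustion $K_n \uparrow {S}^{k}$ by compact sets and set $\Sigma_r = {S}\cap \overline{B_r(0)}$. For $r,N\in \N$ let
\begin{align*}
\mathfrak{S}_{r,N}:= \{\mathfrak{s}\in \mathfrak{S}: \mathfrak{s}(\Sigma_r)\le N\},
\end{align*}
which is vaguely compact; consequently $F_{n,r,N}:= K_n \times \mathfrak{S}_{r,N}$ is compact in ${S}^{k}\times \mathfrak{S}$. To show that a suitable diagonal sequence $(F_{n_\ell,r_\ell,N_\ell})_\ell$ is an $\Enuk$-nest, I produce cut-offs in $\dnuik$ as follows: choose $\varphi_n\in C_0^\infty({S}^{k})$ with $\varphi_n\equiv 1$ on $K_{n-1}$ and $\operatorname{supp}\varphi_n\subset K_n$, pick $j_r\in C_0^\infty({S})$ with $j_r\equiv 1$ on $\Sigma_r$, and set $\psi_{r,N}(\mathfrak{s})= \chi\bigl(\int j_r\, d\mathfrak{s}\bigr)$ where $\chi\in C^\infty(\R)$ is a smooth step from $1$ on $(-\infty,N-1]$ to $0$ on $[N,\infty)$, exactly as in \cite{o.dfa}. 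Then $\chi_{n,r,N}(x,\mathfrak{s}):= \varphi_n(x)\psi_{r,N}(\mathfrak{s})$ lies in $\dnuik$, equals $1$ on $F_{n-1,r-1,N-1}$, and
\begin{align*}
\Enuk(1-\chi_{n,r,N},1-\chi_{n,r,N})\le 2\int |\nabla_x \varphi_n|^2\, d\nuk + 2\int \DDD[\psi_{r,N},\psi_{r,N}]\, d\nuk.
\end{align*}
The Campbell--Palm identity implicit in \eqref{:23a}--\eqref{:23b},
\begin{align*}
\int g(x,\mathfrak{s})\,d\nuk = \int_{\mathfrak{S}} \sum_{\text{distinct }(s_{i_1},\ldots,s_{i_k})} g\Bigl(s_{i_1},\ldots,s_{i_k},\mathfrak{s}-\textstyle\sum_j \delta_{s_{i_j}}\Bigr)\, d\mu,
\end{align*}
reduces the second integral to the unlabeled energy estimate of \cite{o.dfa}, while the first is a standard $C_0^\infty$-estimate weighted by $\rho^{k}$, which is locally bounded by \thetag{M.2}. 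A diagonal choice of $(n_\ell,r_\ell,N_\ell)$ makes $\sum_\ell \Enuk(\chi_{n_\ell,r_\ell,N_\ell},\chi_{n_\ell,r_\ell,N_\ell})^{1/2}$ convergent, which gives the $\Enuk$-nest property.

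For (ii) and (iii), each $f\in \dnuik$ is $C^\infty$ in $x$ and cylindrical in $\mathfrak{s}$, hence possesses a genuine continuous version; density in $\dnuk$ is by definition of the closure. A countable point-separating family is obtained by tensoring a countable separating subset of $C_0^\infty({S}^{k})$ with the countable separating family for $\di$ supplied by \cite[Thm.~1]{o.dfa}. The main obstacle is the $x$-dependence of the Palm measure $\mux$ entering $\nuk$: the configuration-side energy $\DDD[\psi_{r,N},\psi_{r,N}]$ must be integrated uniformly in $x$. The Campbell--Palm identity converts this to an integral against $\mu$ weighted by $\rho^{k}$, and the local boundedness of $\rho^{k}$ from \thetag{M.2} makes the bound uniform on compact $x$-regions, so the $0$-labeled proof of \cite{o.dfa} transfers with only notational changes.
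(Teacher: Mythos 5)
Your plan reproduces the spirit of the paper's proof, but deviates in execution and has a gap at the final step. The paper proceeds in two modular stages: first it applies the nest result from \cite{o.dfa} to the Dirichlet space over the \emph{finite} measure $\murk = \nukr \circ \kappa^{-1}$ (this is \lref{l:72}; $\murk$ is finite by \thetag{M.2} and \eqref{:30a}), and then it transfers compact-nest density from $\di^{\murk}$ to $\dnuik$ via the identity $\murk = \nukr\circ\kappa^{-1}$ and the bound
$\E^{\nuk}_1(\varphi\ot(f-f_{r,n}),\varphi\ot(f-f_{r,n}))\le 2c\,\E^{\murk}_1(f-f_{r,n},f-f_{r,n})$
valid for $\varphi$ supported in $\Srk$ (this is \lref{l:73}). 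Your route — construct cut-offs $\chi_{n,r,N} = \varphi_n\ot\psi_{r,N}$ by hand and push the configuration-side energy through the Campbell--Palm disintegration — lands in the same place, because the Campbell--Palm identity you write produces precisely $\murk$ rather than $\mu$: the sum over distinct $k$-tuples is what generates the combinatorial weight $m^{[k]}$ in \eqref{:30a}. You should keep explicit track of that weight; the reduction is not to \lq\lq the unlabeled energy estimate of \cite{o.dfa}\rq\rq\ over $\mu$ but to the estimate over the finite measure $\murk$, and it is only for the latter that \lref{l:71} applies with the measure normalized.

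The genuine gap is your last step. Summability of $\E^{\nuk}(\chi_{n_\ell,r_\ell,N_\ell},\chi_{n_\ell,r_\ell,N_\ell})^{1/2}$ does not give the nest property; quasi-regularity condition \thetag{Q.1} requires that $\cup_\ell\dnuk(F_\ell)$ be $\{\E^{\nuk}_1\}^{1/2}$-dense in $\dnuk$. To prove density one should take $f=\varphi\ot g\in\dnuik$ and show $\chi_{n,r,N}\,f\to f$ in $\{\E^{\nuk}_1\}^{1/2}$; the product rule for $\DDD^k$ reduces this to controlling $\int f^2\,\DDD^k[\chi_{n,r,N},\chi_{n,r,N}]\,d\nuk$. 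Here the compact $x$-support of $\varphi$ is what saves you: for $n$ large, $\varphi_n\equiv 1$ on $\mathrm{supp}_x f$ and the $\nabla_x\varphi_n$ term drops out, so only the configuration-side cut-off $\psi_{r,N}$ survives, and that is exactly the estimate you reduce to $\murk$. The summability criterion you invoke also has a separate flaw: $\int|\nabla_x\varphi_n|^2\,d\nuk=\int_{{S}^k}|\nabla_x\varphi_n|^2\rho^k\,dx$ need not tend to zero as $n\to\infty$ (local boundedness of $\rho^k$ gives no decay at infinity), so even the summability you claim is not justified. None of this is fatal — replacing the summability argument by the density argument $\chi_{n,r,N}f\to f$ (with the compact-$x$-support observation) repairs the proof — but as written the last step does not follow.
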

By \lref{l:23} there exists a diffusion 
$ \PP ^{\nuk }= 
(\{\PP ^{\nuk }_{\xsss }\}_{\xsss \in {S}^{k}\ts \mathfrak{S}}, 
\mathfrak{X}^{1}) $ associated with 
the Dirichlet space $\9 $. 
Here we set $ \mathfrak{X}^{1}= (X,\mathfrak{X}) \in 
C([0,\infty);{S}^{k}\ts \mathfrak{S}) $. 
By construction 
$(x,\mathfrak{s})=\mathfrak{X}^{1}_0=(X_0,\mathfrak{X}_0)$ 
$ \PP ^{\nuk }_{\xsss }$-a.s.. 

Let $ \map{\kappa }{{S}^{k}\ts \mathfrak{S}}{\mathfrak{S}} $ 
be such that 
$ \kappa (x ,\mathfrak{s}) = 
\sum_{j=1}^{k}\delta _{x_j}+ \mathfrak{s}$, where 
$ x=(x_1,\ldots,x_k) $. 
By the correspondence 
$ \kappa ((X,\mathfrak{X}))=
\{\sum_{j=1}^{k}\delta _{X_t^{j}}+\mathfrak{X}_t\} $ 
we regard $ \kappa $ as the map from 
$ C([0,\infty);{S}^{k}\ts \mathfrak{S}) $ to 
$ C([0,\infty);\mathfrak{S}) $. 
We also denote by $ \kappa $ the map 
$ \map{\kappa }
{{S}^{\infty }\cup \sum_{k=1}^{\infty} {S}^{k}} 
{\mathfrak{S}}$ 
such that 
$ \kappa ((x_i)) = \sum _{i} \delta _{x_i}$, 
and regard $ \kappa $ as the map from 
$ C([0,\infty);
{S}^{\infty }\cup \sum_{k=1}^{\infty}{S}^{k}) $ to 
$ C([0,\infty);\mathfrak{S}) $.
For simplicity we denote these maps 
by the same symbol $ \kappa $.

Let $ \map{\jmath }{\mathfrak{S}_{\mathrm{single}}}
{{S}^{\infty }\cup \sum_{k=1}^{\infty}{S}^{k}} $ 
be a measurable map such that 
$ \kappa \circ \jmath $ is the identity map. 
We call this map a label map. 
Indeed, this map means labeling all the particles. 
We remark that plural maps satisfy 
the condition as above. 
So we choose any $ \jmath $ of such maps 
in the sequel. 

Once we fix a label map $ \jmath $, 
we can naturally extend the label map $ \jmath $ 
to the map from  
$ C([0,\infty);\mathfrak{S}_{\mathrm{single}}) $ to 
$ C([0,\infty);
{S}^{\infty }\cup \sum_{k=1}^{\infty}{S}^{k})$. 
Indeed, 
for a path $ \mathfrak{X}=\{ \mathfrak{X}_t  \}
\in C([0,\infty);\mathfrak{S}_{\mathrm{single}}) $, 
there exists a unique 
$ \{(X_t^i)\}\in C([0,\infty);
{S}^{\infty }\cup \sum_{k=1}^{\infty}{S}^{k})$ 
such that 
$ (X_0^i)=\jmath (\mathfrak{X}_0) $ and that 
$ \sum _i \delta _{X_t^i}=  \mathfrak{X}_t $ 
for all $ t \in [0,\infty ) $. 
We write this map by the same symbol $ \jmath $. 

\begin{thm} \label{l:24}
Assume \thetag{M.1.0}, \thetag{M.1.$ k $}, 
\thetag{M.2}, and \thetag{M.3}. Assume 
\begin{align}\label{:24c}&
\Pmt (\sup _{0\le t \le u}|X_t^{i}| < \infty 
\text{ for all } u, i \in \N )=1 
\quad \text{ for q.e.\ } \mathfrak{s}
.\end{align}
Here we initially label the process 
$ \mathfrak{X} $ as 
$ \mathfrak{X}_0 = 
\sum_{i=0}^{\infty}\delta _{X_0^i} $. 
Let $ \kappa $ and $ \jmath $ be maps given 
before \tref{l:24}. 
Let $ \hat{\mathfrak{S}}_{\mathrm{single}}  $ 
be as in \lref{l:22}. 
Then there exists a set $ \tilde{\mathfrak{S}} $ satisfying  
\begin{align}\label{:24e}&
\tilde{\mathfrak{S}} \subset 
\hat{\mathfrak{S}}_{\mathrm{single}}
,\\\label{:24a}&
\mathrm{Cap}^{\mu }(\tilde{\mathfrak{S}}^c)= 0 
,
\\\label{:24f}&
\PP ^{\mu }_{\mathfrak{s}}(\mathfrak{X}_t\in \tilde{\mathfrak{S}} 
\text{ for all } t)= 1 
\quad \text{ for all } \mathfrak{s}\in \tilde{\mathfrak{S}} 
,\end{align}
and for all $ k\in\N $ 
\begin{align}
\label{:24b} &
\PP _{\mathfrak{s}^{k}}^{\nuk }  = 
\Pm _{\kappa (\mathfrak{s}^{k})} \circ \jmath ^{-1}
\quad \text{ for all }\mathfrak{s}^{k}\in 
\jmath (\tilde{\mathfrak{S}})
\\ 
\label{:24}&
 \Pm _{\mathfrak{s}} = 
\PP _{\jmath (\mathfrak{s})}^{\nuk }\circ \kappa ^{-1} 
\quad  \text{ for all } \mathfrak{s}\in \tilde{\mathfrak{S}} 
.\end{align}
\end{thm}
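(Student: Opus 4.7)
The plan is to use uniqueness of the diffusion associated with a quasi-regular Dirichlet form to identify $\PP^{\nuk}$ with a push-forward of $\Pm$ under the label map $\jmath$. The proof proceeds in three stages.

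First, I would construct $\tilde{\mathfrak{S}}$. Start from $\hat{\mathfrak{S}}_{\mathrm{single}}$ of Lemma \ref{l:22}; the hypothesis \eqref{:24c} says the set of configurations from which some labeled particle explodes in finite time is $\Pm$-null for q.e.\ starting point, so its $\mathrm{Cap}^{\mu}$-capacity is zero. A standard argument (passing to the $\Pm$-invariant refinement of the complement of the exceptional set) then produces a subset $\tilde{\mathfrak{S}} \subset \hat{\mathfrak{S}}_{\mathrm{single}}$ of full $\mathrm{Cap}^{\mu}$-capacity for which \eqref{:24e}, \eqref{:24a}, \eqref{:24f} hold and on which the path-wise extension of $\jmath$ described in the excerpt is well-defined and continuous in $t$.

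Second, and this is the core, I would show that the image of $(\Em,\dm)$ under the $k$-labeling map coincides with $(\Enuk,\dnuk)$ on $\Lnuk$. The decisive identity is the Palm-type formula
\begin{align*}
\int_{S^{k}\ts\mathfrak{S}} f(x,\mathfrak{s})\, d\nuk = \int_{\mathfrak{S}} \sum_{(i_1,\ldots,i_k)\,\text{distinct}} f\bigl(s_{i_1},\ldots,s_{i_k},\,\mathfrak{s}-\textstyle\sum_{j}\delta_{s_{i_j}}\bigr)\, d\mu,
\end{align*}
which rewrites $\nuk$-integrals as ordered-tuple sums against $\mu$. Combined with the chain-rule decomposition \eqref{:23d} of $\DDD^{k}$ into gradients at the tagged particles plus the configurational $\DDD$ at the remainder, this identity matches the two Dirichlet energies: for $f\in\dnuik$, $\Enuk(f,f)$ equals the sum over choices of tagged $k$-tuples of the $\Em$-energies of the corresponding labeled restrictions of $f$. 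The transformation of Dirichlet spaces introduced in Section 3 supplies the rigorous framework for this matching.

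Third, having matched the forms, uniqueness of the diffusion associated with the quasi-regular Dirichlet space $(\Enuk,\dnuk,\Lnuk)$ (Lemma \ref{l:23}) up to exceptional sets forces $\Pm_{\kappa(\mathfrak{s}^k)}\circ \jmath^{-1}=\PP^{\nuk}_{\mathfrak{s}^k}$ on $\jmath(\tilde{\mathfrak{S}})$, which is \eqref{:24b}. The companion identity \eqref{:24} follows by applying $\kappa$ to both sides and using $\kappa\circ\jmath=\mathrm{id}$ on $\mathfrak{S}_{\mathrm{single}}$.

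The main obstacle is Step 2: there is no measure-preserving isomorphism between $(\mathfrak{S},\mu)$ and $(S^{k}\ts\mathfrak{S},\nuk)$ — indeed $\nuk$ is typically an infinite measure and the $k$-labeling is many-to-one at the symmetric level — so one cannot simply invoke an off-the-shelf Dirichlet-form transformation theorem. One must instead localize (to a bounded region in $S$ with a bounded particle count in a growing window), verify the Palm matching on each localized piece, and glue back to recover the full closure $(\Enuk,\dnuk)$. This is precisely the source of difficulty the author flags in the introduction.
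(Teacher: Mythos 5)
Your overall strategy — relate $(\Enuk,\dnuk)$ to $(\Em,\dm)$ by a Palm-type identity, localize because $\nuk$ is infinite, and then invoke uniqueness of the associated diffusion — points in roughly the right direction, and your global Palm identity is the correct starting observation. But the proposal has a genuine gap exactly where the paper's work lies. Two issues. First, the global energy identity you assert does not hold for general $f\in\dnuik$, since $f$ need not be permutation-invariant in its arguments; one first has to symmetrize, and the paper shows (via Schwarz) that symmetrization only \emph{decreases} energy, so the correspondence is between the symmetric part of $\dnuirD$ and $\dmrrD$, not a naive pairing of the full spaces. Second, and more seriously, even after localizing to a bounded ball $S_r$ with bounded particle number, "glue back to recover the full closure" fails as stated: the exit time $\tau^0_{r,x}$ of the tagged particle from $S_r$ is \emph{not} the hitting time of any Borel subset of the unlabeled state space $\mathfrak{S}$ (this is Remark~\ref{r:41} in the paper), so one cannot identify the stopped unlabeled process with a part of the Dirichlet form $(\Em,\dm)$ and apply uniqueness. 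Your Step 3 tacitly assumes this identification.

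The paper resolves this by working instead with the exit time $\szr$ of the configuration from $\partial\mathfrak{S}_r$ — which \emph{is} a hitting time on $\mathfrak{S}$ and hence defines a part — and showing the parts of $(\Em,\dm)$ and $(\Enuk,\dnuk)$ coincide there after the $N$-truncation and symmetrization (Lemma~\ref{l:42}). It then recovers the tagged-particle stopping time by an interleaved sequence of hitting times $\szir,\soir$ alternating between radii $r$ and $r+1$, uses $\mathrm{Cap}^{\mu}(\partial\mathfrak{S}_r\cap\partial\mathfrak{S}_{r+1})=0$ to rule out accumulation of exit times (Lemma~\ref{l:43}), and applies the strong Markov property repeatedly to stitch the local identifications up to $\tau^0_{r,x}$ and then $\tau^0_{\infty,x}$ (Lemma~\ref{l:44}). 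Hypothesis \eqref{:24c} then makes $\tau^0_{\infty,x}=\infty$ a.s., completing \eqref{:24b}--\eqref{:24}. Your proposal contains no counterpart to this interleaving construction, which is the substance of the proof; without it the localization-and-gluing step does not go through.
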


\begin{rem}\label{r:24} 
\thetag{1} Since $ \jmath $ is any measurable map satisfying 
$ \kappa \circ \jmath = \mathrm{id.} $, 
we see by \eqref{:24} that 
$ \Pm _{\mathfrak{s}} = 
\PP _{\mathfrak{s}^{k}}^{\nuk }\circ \kappa ^{-1}  $ 
for all $ \mathfrak{s}^{k} \in \kappa ^{-1} (\mathfrak{s}) $. 
\\\thetag{2} 
Let $ \mathrm{Cap}^{\nuk } $ denote the capacity 
associated with $ \9 $. Then by \eqref{:24a} and \lref{l:41}, 
we deduce 
\begin{align}\label{:24d}&
\mathrm{Cap}^{\nuk }(\kappa ^{-1}(\tilde{\mathfrak{S}} )^c)= 0
.\end{align}
\end{rem}

We recall that $ \PP ^{\mu }$ is conservative 
as a diffusion on $ \mathfrak{S}$ equipped 
with the vague topology. 
However, each of the tagged particles may explode 
under the usual metric on $ \Rd $. 
So \eqref{:24c} does not hold in general. 
Next we prepare a sufficient condition for \eqref{:24c}. 

\begin{thm} \label{l:25}
Assume 
\thetag{M.1.0}, \thetag{M.1.1}, \thetag{M.2}, and 
\thetag{M.3}. 
Assume there exists $ T>0 $ such that 
for each $ R>0 $ 
\begin{align}\label{:25a}&
\liminf_{r\to \infty} \{ 
\int_{{S}_{r+R}} \rho ^1 (x)dx \}\cdot \ell 
(\frac{r}{\sqrt{(r+R )T}}) = 0
,\end{align}
where $ \ell (x)=(2\pi )^{-1/2}
\int_x^{\infty}e^{-x^2/2}dx $. 
Then we obtain \eqref{:24c}. 
\end{thm}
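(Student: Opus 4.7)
The plan is to reduce the statement to a single-tagged-particle exit estimate and then sum over labeled particles via the Palm intensity $\rho^1$. By \tref{l:24} the measure $\Pmt$ is, up to the measurable label map $\jmath$, identical to the pushforward of the 1-labeled law $\Pxtnu$, so \eqref{:24c} will follow once I control the probability that a given tagged particle $X^i$ moves farther than $r$ in time $T$ and sum this control against the intensity of initial positions in $S_{r+R}$. Throughout I fix $T$ as in the hypothesis and argue on $[0,T]$; the extension to all $u>0$ is immediate by iterating the Markov property of $\PP^{\mu}$ on blocks $[nT,(n+1)T]$.

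The heart of the matter is a single-particle exit bound of the form
\begin{align*}
\Pxtnu\bigl(\sup_{0\le t\le T}|X_t-x|\ge r\bigr) \le C\,\ell\bigl(r/\sqrt{(r+R)T}\bigr)
\end{align*}
valid uniformly in $x$ with $|x|\le r+R$ and in the environment $\mathfrak{s}$. For this I would exploit reversibility of $\nu$ (which follows from \lref{l:20}(3) applied to the 1-labeled dynamics of \lref{l:23}) and apply a Lyons--Zheng type decomposition to the coordinate functions $f(y,\mathfrak{s})=y_j$. Because the energy measure of $f$ along the tagged-particle direction is Lebesgue, the associated martingale additive functional has quadratic variation $t$, and a standard Gaussian reflection-principle estimate converts this into a tail of the desired form. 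The factor $\sqrt{(r+R)T}$ arises from chaining $O(r+R)$ short-time Gaussian increments required to traverse distance $r$ while the particle remains inside $S_{r+R}$.

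With this pointwise bound, the Palm identity \eqref{:rm} with $n=1$ yields
\begin{align*}
\Pmt\bigl(\exists\,i \colon X_0^i\in S_{r+R},\ \sup_{0\le t\le T}|X_t^i - X_0^i|\ge r\bigr) \le C\,\ell\bigl(r/\sqrt{(r+R)T}\bigr)\int_{S_{r+R}} \rho^1(x)\,dx,
\end{align*}
and hypothesis \eqref{:25a} forces the right-hand side to vanish along some subsequence $r_n\to\infty$. A Borel--Cantelli argument then shows that $\Pmt$-a.s.\ every labeled particle stays in a bounded set throughout $[0,T]$, which is \eqref{:24c} on $[0,T]$; Markov iteration delivers the full conclusion.

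The main obstacle is the single-particle Gaussian tail. The zero-energy drift in the Fukushima decomposition of $y_j$ admits no pointwise bound in the environment; the only lever is $\nu$-reversibility, which via Lyons--Zheng rewrites the drift as a time-reversal of a martingale of the same law. Making this rewriting yield precisely the $\sqrt{(r+R)T}$ normalization — and not merely $\sqrt{T}$ — requires the chaining argument sketched above, with constants kept uniform in $\mathfrak{s}$ so that the integration against $\mu_x$ in the Palm formula is harmless. Measurability of the labeling with respect to exit times is a secondary concern that is handled by \tref{l:24}.
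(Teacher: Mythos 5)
Your plan is, in spirit, a re-derivation of the single ingredient the paper simply cites: the conservativeness criterion in Fukushima--Oshima--Takeda (Theorem 5.7.2), which is proved there precisely by the Lyons--Zheng decomposition applied under the reversible measure (Takeda's method). The paper's proof of \tref{l:25} is therefore much shorter than yours: it applies that theorem verbatim to the one-tagged diffusion $\PP^{\nu}$ to conclude $\PP^{\nu}$ is conservative (\lref{l:61}), then translates this back to the unlabeled process via the Palm formula and the identity of \lref{l:44} (\lref{l:62}), and finally upgrades from $\mu$-a.e.\ to quasi-everywhere by a separate capacity argument (\lref{l:63}). You are thus taking a genuinely more hands-on route to one of the three steps, while omitting the other two.

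There are two concrete gaps. First, the \emph{pointwise} tail bound you posit,
\begin{align*}
\Pxtnu\bigl(\sup_{0\le t\le T}|X_t-x|\ge r\bigr)\le C\,\ell\bigl(r/\sqrt{(r+R)T}\bigr)
\quad\text{``uniformly in the environment $\mathfrak{s}$''},
\end{align*}
is not achievable by Lyons--Zheng. That decomposition writes $u(X_t)-u(X_0)$ as a combination of a forward martingale and the time-reversal of a martingale \emph{only when the law of the path is $\nu$-stationary}; the quadratic-variation Gaussian estimate you invoke therefore produces an inequality for $\int\Pxtnu(\cdots)\,d\nu$, not for $\Pxtnu(\cdots)$ at a single $(x,\mathfrak{s})$. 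The zero-energy drift has no uniform-in-$\mathfrak{s}$ control, as you yourself flag, and there is no lever to obtain one. You should replace the pointwise bound by the $\nu_r$-integrated bound throughout; that is in fact exactly what Theorem 5.7.2 delivers and what \lref{l:62} consumes.

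Second, your conclusion lands at a statement holding $\mu$-a.e., whereas \eqref{:24c} is a statement for \emph{quasi-every} $\mathfrak{s}$. Passing from $\mu(\mathfrak{A}_r)=0$ to $\mathrm{Cap}^{\mu}(\mathfrak{A}_r)=0$ is not automatic and is where the paper does real work in \lref{l:63}: one observes that for $\mathfrak{s}\notin\mathfrak{A}_r$ the process never hits any compact $K\subset\mathfrak{A}_r$, integrates against $\mu$, and uses inner regularity of capacity. Your proposal stops at the $\mu$-a.s.\ level and also invokes Borel--Cantelli unnecessarily; since the explosion event is contained in the exit-from-$S_{r}$ event for every $r$, having the bound vanish along one subsequence already suffices, provided the bound is the integrated one. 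The heuristic ``chaining $O(r+R)$ short-time increments'' offered to explain $\sqrt{(r+R)T}$ is also not a proof and should be replaced by the precise test-function argument from Takeda's criterion, where that normalization comes from the cutoff of the distance function, not from a chaining count.
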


\begin{rem}\label{r:25}
\eqref{:25a} is satisfied if 
there exists a positive constant 
$ \Ct{;24} $ such that 
\begin{align}\label{:25b}&
\sup _{x \in {S}}\rho ^{1} (x) e^{-\cref{;24}|x|} < \infty  
.\end{align}
\end{rem}

We next proceed to the environment process. 
So we assume $ {S}= \Rd $. 
Let $ \map{\vartheta _{a}}{\mathfrak{S}}{\mathfrak{S}} $ denote 
the translation defined by 
$ \vartheta _{a}(\sum_i \delta_{x_i} )= 
\sum_i \delta_{x_i-a} $. 
We assume: 

\medskip
\noindent 
\thetag{M.4} $ \mu $ is translation invariant, 
that is, 
$ \mu \circ \vartheta _{a}^{-1} = \mu  $ 
for all $ a\in \Rd $. 

\medskip

\noindent 
By \thetag{M.4} we can and do choose 
the version $\mu _x $ in such a way that 
$\mu _x=\mu_0\circ\vartheta _x^{-1}$ 
for all $ x\in \R^d $. 
Here $ \mu _x $ is the conditional probability 
given by \eqref{:23a} with $ x \in \Rd $.

Let 
$\nabla _{i}=
(\PD{}{s_{i1}},\ldots,\PD{}{s_{id}})$. 
Let $\map{D}{\di }{(\di )^d}$ such that 
\begin{align}\label{:26a}&
D f(\mathfrak{s}) = 
\{ \sum_{i}\nabla _{i}\widetilde{f} \}  
\quad\text{ for }\mathfrak{s}\in \mathfrak{S}
\quad 
\text{($ \widetilde{f}$ is same as \eqref{:20z})}
.\end{align}
Note that $ D $ is the generator of 
the group of the unitary operators on $ \Lm $ 
generated by the translation $ \{\vartheta _a \}$.  
Let 
$ \xx = (\PD{}{x_{1}},\ldots,\PD{}{x_{d}})$ be the 
nabla on $ \Rd $. 
Let $\map{(D\!-\!\xx )}{\dione }{(\dione )^d}$ 
be such that 
\begin{align}\label{:26b}&
(D\!-\!\xx) f (x,\mathfrak{s}) = 
\{D f (x,\cdot )\} (\mathfrak{s}) - 
\{ \xx  f (\cdot ,\mathfrak{s}) \} (x)
\quad\text{ for }(x,\mathfrak{s})\in \mathfrak{S}^{1}
\end{align}
We set 
\begin{align}\label{:26c}&
\DY [f,g]=
\frac{1}{2}(D f ,D g )_{\Rd }
+\DDD [f,g]
\quad \text{ for $ f,g \in \di $}
\\ \notag &
\DXY [f,g]=
\frac{1}{2}
((D\!-\!\xx) f ,(D\!-\!\xx) g )_{\Rd }+ 
\DDD [f,g]
\quad \text{ for $ f,g \in \dione $}
.\end{align}
Here for $ f,g \in \dione $ we set 
$ \DDD [f,g](x,\mathfrak{s})= 
\DDD [f(x,\cdot ),g(x,\cdot )](\mathfrak{s}) $. 
Let 
\begin{align} \label{:26d}& 
\EY (f ,g )=\int_{\mathfrak{S}}\DY [f,g] d\mu_0 
\\ \label{:26e}& 
\EXY ( f , g )=\int_{\Rd \ts \mathfrak{S}}\DXY [f,g] dxd\mu_0
.\end{align}
Let $ \Lmz = L^2(\mathfrak{S},\mu_0)$ and 
$\LXY = L^2(\Rd \!\times\!\mathfrak{S},\0 )$. 
Let 
\begin{align}\label{:26f}&
\dYi = \{ g \in \di \cap \Lmz  \, ;\, 
\EY ( g , g )< \infty  \} 
\\\label{:26g}&
\dXYi = \{ h \in \CD \cap \LXY \, ;\, \EXY ( h , h )< \infty  \} 
.\end{align}

\begin{thm}   \label{l:26} 
Assume \thetag{M.1.0}, \thetag{M.1.1}, 
\thetag{M.2}, \thetag{M.3}, and \thetag{M.4}. 
Then \\
\thetag{1} 
The form $(\EY ,\dYi )$ is closable on $ \Lmz $. 
There exists a diffusion $ \PP ^{Y}$ 
associated with its closure $(\EY ,\dY )$ on $ \Lmz $. 
Moreover, 
$(\EY ,\dY )$ is a quasi-regular Dirichlet form on $ \Lmz $. 
\\\thetag{2} 
The form $(\EXY ,\dXYi )$ is closable on $ \LXY $. 
There exists a diffusion 
$\PP ^{XY}$ associated with its closure $\Xa $ on $ \LXY $. 
Moreover, 
$(\EXY ,\dXY )$ is a quasi-regular Dirichlet form on $ \LXY $. 
\end{thm}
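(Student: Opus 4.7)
The key observation is that translation invariance \thetag{M.4} forces $\rho^1$ to be a positive constant and $\mu_x = (\vartheta_x)_* \mu_0$ for every $x\in\Rd$, so the measure $\nu$ on $\Rd\ts\mathfrak{S}$ differs from $\rho^1\,dx\ts\mu_0$ only by an $x$-dependent translation of the $\mathfrak{s}$-variable. My plan is to deduce both parts from the quasi-regular Dirichlet form $(\Enu,\dnu)$ of the $1$-labeled dynamics produced by \lref{l:23} (with $k=1$), rather than re-running its proof from scratch.

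\emph{Part \thetag{2}.} I would define $U\colon \LXY \to \Lnu$ by $(UG)(x,\mathfrak{s}) := G(-x,\vartheta_{-x}\mathfrak{s})$. A Fubini computation using $\mu_x = (\vartheta_x)_*\mu_0$ gives $\|UG\|_{\Lnu}^{2} = \rho^1 \|G\|_{\LXY}^{2}$, so $U$ is a scalar multiple of a unitary. Since $D$ is the infinitesimal generator of the pull-back $f\mapsto f(\vartheta_{-a}\cdot)$ of translations on $\di$, the chain rule yields $\nabla_x(UG)(x,\mathfrak{s}) = ((D-\nabla)G)(-x,\vartheta_{-x}\mathfrak{s})$, and the translation invariance of $\DDD$ on $\mathfrak{S}$ yields $\DDD[UG(x,\cdot),UG(x,\cdot)](\mathfrak{s}) = \DDD[G(-x,\cdot),G(-x,\cdot)](\vartheta_{-x}\mathfrak{s})$. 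Combining these and integrating gives $\Enu(UG,UG) = \rho^1 \EXY(G,G)$ on $\dXYi$, with $U$ bijecting $\dXYi$ onto $\dnui$. Hence closability of $(\EXY,\dXYi)$ and quasi-regularity of the closure $\Xa$ transfer from \lref{l:23}, and the diffusion $\PP^{XY}$ is produced by the standard Ma--R\"ockner correspondence.

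\emph{Part \thetag{1} and main obstacle.} There is no isometric embedding of $\Lmz$ into $\LXY$, since constant-in-$x$ functions are not in $\LXY$. My plan is to pass via a cutoff: for $\phi \in C_0^{\infty}(\Rd)$ and $g\in\dYi$ the product $\phi\ot g$ lies in $\dXYi$, and one has
\begin{align*}
 \EXY(\phi\ot g,\phi\ot g) = \|\phi\|_2^{\,2}\,\EY(g,g) + \tfrac{1}{2}\|g\|_{\Lmz}^{2}\|\nabla\phi\|_2^{\,2} - \int_{\Rd\ts\mathfrak{S}} \phi(\nabla\phi)\,g(Dg)\, dx\, d\mu_0.
\end{align*}
Taking $\phi=\phi_n$ that approximates $1$ on the ball of radius $n$ with $\|\nabla\phi_n\|_2^{\,2}/\|\phi_n\|_2^{\,2}\to 0$, controlling the cross term by Cauchy--Schwarz, and invoking closability of $(\EXY,\dXYi)$ from Part \thetag{2} yields closability of $(\EY,\dYi)$. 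Quasi-regularity of $\Ya$ then follows from a nest-of-compacts construction with a countable separating family in $\dYi$, in the spirit of \lref{l:23}, and the diffusion $\PP^Y$ comes from Ma--R\"ockner. The main obstacle is this Part \thetag{1}: because $\mu_0$ is not translation invariant, $D$ is not the generator of any unitary group on $\Lmz$, so the symmetric-generator route available for $\Enu$ cannot be applied directly; the cutoff detour through $\EXY$ demands uniform control of the cross term and of $\|g\|^{2}\|\nabla\phi_n\|_2^{\,2}$ along a Cauchy sequence in $\EY$, which is where the real technical work lies.
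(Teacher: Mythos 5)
Your Part \thetag{2} is essentially the paper's argument: the paper uses the map $\iota(x,\mathfrak{s})=(x,\vartheta_x\mathfrak{s})$ and the pull-back $\iota^*(f)=f\circ\iota$ (Lemma~\ref{l:51}), which is your $U$ up to an irrelevant reflection $x\mapsto -x$; both give an isometry between $(\Enu,\dnui,\Lnu)$ and $(\EXY,\dXYi,\LXY)$, after which closability, the diffusion and quasi-regularity transfer from Lemma~\ref{l:23}.

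Your Part \thetag{1}, however, has a real gap in the closability step. You correctly compute
\begin{align*}
\DXY[\phi\ot g,\phi\ot g]=\phi^2\ot\DY[g,g]+\tfrac{1}{2}|\nabla\phi|^2\ot g^2-(\phi\nabla\phi,\,g\,Dg)_{\Rd},
\end{align*}
but you then propose to ``control the cross term by Cauchy--Schwarz'' along a sequence $\phi_n\to1$, and you flag that controlling this term and $\|g\|^2\|\nabla\phi_n\|_2^2$ ``is where the real technical work lies.'' There is no such work: because the integrand factorizes over $\0=dx\times\mu_0$ and $\int_{\Rd}\phi\nabla\phi\,dx=\tfrac{1}{2}\int_{\Rd}\nabla(\phi^2)\,dx=0$ for any $\phi\in\Cz$, the cross term is \emph{identically zero}. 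That gives the exact identity \eqref{:52e},
\begin{align*}
\EXY(\phi\ot g,\phi\ot g)=\|\phi\|^2_{L^2(dx)}\EY(g,g)+\tfrac{1}{2}\|\nabla\phi\|^2_{L^2(dx)}\|g\|^2_{\Lmz},
\end{align*}
so a \emph{single fixed} $\phi$ suffices and closability of $(\EY,\dYi)$ is an immediate consequence of closability of $(\EXY,\dXYi)$; no limit $\phi_n\to1$ is needed.

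For quasi-regularity of $(\EY,\dY)$ and the diffusion $\PPY$ you take a genuinely different route: prove quasi-regularity directly by a nest-of-compacts argument ``in the spirit of Lemma~\ref{l:23}'' and then invoke Ma--R\"ockner. The paper instead constructs $\PPY$ concretely as the $\mathfrak{S}$-marginal of $\PP^{XY}$: Lemma~\ref{l:52} shows the $\mathfrak{X}$-marginal law is independent of the $x$-coordinate on a set of full capacity, Lemma~\ref{l:53} uses this to define a diffusion $\PPY$, and Lemma~\ref{l:54} verifies via the weight $\rho$, the auxiliary form $\EXY_{\rho,\lambda}$, and a resolvent/Laplace-transform computation that this $\PPY$ is associated with $(\EY,\dY)$; quasi-regularity is then read off from Albeverio--Ma--R\"ockner. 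Your short-cut is not clearly available here: the carré du champ $\DY=\tfrac12(Df,Dg)+\DDD[f,g]$ involves the non-local derivation $D$, so the nest-of-compacts and cut-off estimates of Lemma~\ref{l:71}/Section~\ref{s:7} do not transfer automatically, and even if quasi-regularity were obtained directly, Ma--R\"ockner would give \emph{some} diffusion but not automatically the one identified as the $\mathfrak{S}$-marginal of $\PP^{XY}$, which is what Theorem~\ref{l:27} needs. You would need to supply the analogues of Lemmas~\ref{l:52}--\ref{l:54} to close this part.
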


\begin{rem}\label{r:22}
Fattler and Grothaus \cite{fg} prove the quasi-regularity of 
$(\EY ,\dY , \Lmz )$ and $(\EXY ,\dXY ,\LXY )$ 
for grand canonical Gibbs measures $ \mu $ with translation invariant interaction potentials which are differentiable outside the origin. 
Their method is different from ours. 
\end{rem}

By \eqref{:22d} we can write 
$\mathfrak{X}\in C([0,\infty);\mathfrak{S})$ as 
\begin{align}\label{:22b}&
\mathfrak{X}_t=\sum_{i} \delta _{X^i_t} 
\quad \PP ^{\mu }_{\mathfrak{s}}\text{-a.s.\ for all }\mathfrak{s}
\in 
\hat{\mathfrak{S}}_{\mathrm{single}}
,\end{align}
where $ X^i \in C(I_i ; \R^d)$ 
and $ I_i$ is the maximal interval in $[0,\infty)$ 
of the form $[0,b)$ or $(a,b)$ satisfying 
the representation \eqref{:22b}. 
Write $ \mathfrak{s}(x)=\mathfrak{s}(\{x\})$ and let 
\begin{align}\label{:22c}&
\mathfrak{S}_x = \{\mathfrak{s}\in \tilde{\mathfrak{S}} \ ;\ 
\mathfrak{s}(x)=1 \} 
.\end{align}
If $\mathfrak{X}_0=\mathfrak{s}\in \mathfrak{S}_x $, then 
there exists an $ i(x,\mathfrak{s})$ such that 
$ X_0^{i(x,\mathfrak{s})}=x $ and such 
$\R^d$-valued path 
$ X^{i(x,\mathfrak{s})}=\{ X_t^{i(x,\mathfrak{s})}\}$ is unique. 
For each $\mathfrak{s}\in \mathfrak{S}_x $ we regard 
$(X^{i(x,\mathfrak{s})},\PP ^{\mu }_{\mathfrak{s}})$ 
as the tagged particle starting at $ x $. 
Let $ \mathfrak{Y}^{x} $ be the process defined by 
\begin{align}\label{:26h}&
\mathfrak{Y}^{x}_t:=
\sum_{i\not= i(x,\mathfrak{s})} 
\delta _{X_t^i-X_t^{i(x,\mathfrak{s})}}
\quad\text{ under $\PP ^{\mu }_{\mathfrak{s}}$ } 
\text{ for  }\mathfrak{s}\in \mathfrak{S}_x 
.\end{align}
The process $ \mathfrak{Y}^{x} $ describes 
the environment seen from 
the tagged particle $ X ^{i(x,\mathfrak{s})} $.

Let $\PP ^{XY} $ be the diffusion associated with 
$(\EXY ,\dXY ,\LXY )$. 
The following clarifies the relations among 
the diffusions $\PP ^{\mu }$, $\PP ^{XY} $ and $ \PP ^{Y}$. 
\begin{thm} \label{l:27}
Assume \thetag{M.1.0}, \thetag{M.1.1}, 
\thetag{M.2}, \thetag{M.3}, and \thetag{M.4}. 
Let $ X^{i(x,\mathfrak{s})} $ and $ \mathfrak{Y}^{x} $ 
be as above. 
Let $\mathfrak{X}^{1}=(X ,\mathfrak{X} )\in {C([0,\infty); \RdT )} $. 
Then (a version of) 
$\PP ^{XY}$ satisfies for each $ x \in \Rd $ 
\begin{align}\label{:27a}&
\PP ^{\mu }_{\mathfrak{s}} ( X^{i(x,\mathfrak{s})} \in \cdot )
= 
\zXY 
( X \in \cdot ) \quad \text{ for all }
\mathfrak{s}\in \mathfrak{S}_x 
,\\ \label{:27b}&
\PP ^{\mu }_{\mathfrak{s}}(\mathfrak{Y}^{x} \in \cdot ) = 
\zXY 
(\mathfrak{X} \in \cdot ) = \zaY 
\quad \text{ for all }
\mathfrak{s}\in \mathfrak{S}_x 
.\end{align}
\end{thm}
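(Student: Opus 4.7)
The plan is to deduce \tref{l:27} from \tref{l:24} (at $k=1$) via a measure-preserving transformation that intertwines the Dirichlet forms $\9$ and $\Xa$. Define
\begin{align*}
\Phi(x, \mathfrak{s}) = (x, \vartheta_x \mathfrak{s})
\end{align*}
on $\{(x, \mathfrak{s}) \in \R^d \ts \mathfrak{S} : x \notin \mathfrak{s}\}$. By \thetag{M.4}, $\rho^1$ is constant and $\mu_x$ is a translate of $\mu_0$, so $\Phi_\ast \nu = \rho^1 \cdot dx \ts \mu_0$; thus $\tilde f \mapsto \tilde f \circ \Phi$ is, up to the positive constant $\rho^1$, a unitary isomorphism $\LXY \to \Lnu$.

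The next step is to verify that this isomorphism intertwines the two forms. For $\tilde f \in \dXYi$ and $f = \tilde f \circ \Phi$ the configuration part is preserved because $\vartheta_x$ is an isometric translation: $\DDD[f(x,\cdot), f(x,\cdot)](\mathfrak{s}) = \DDD[\tilde f(x,\cdot), \tilde f(x,\cdot)](\vartheta_x \mathfrak{s})$. The essential chain-rule identity is $\nabla_x f(x, \mathfrak{s}) = [(\xx - D)\tilde f](\Phi(x,\mathfrak{s}))$---the extra $-D\tilde f$ term arising because shifting $x$ while keeping $\mathfrak{s}$ fixed translates every point of $\vartheta_x \mathfrak{s}$ in the opposite direction. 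Squaring and integrating against $\nu$ yields
\begin{align*}
\Enu(f, f) = \rho^1 \cdot \EXY(\tilde f, \tilde f),
\end{align*}
which extends to the closures by a density argument. General Dirichlet form theory \cite{mr} then produces the path-level identity $\PP^{XY}_{\Phi(x,\mathfrak{s})} = \PP^{\nu}_{(x,\mathfrak{s})} \circ \Phi^{-1}$ for q.e.\ $(x,\mathfrak{s})$, where $\Phi$ acts on paths as $(X_t, \mathfrak{X}_t) \mapsto (X_t, \vartheta_{X_t}\mathfrak{X}_t)$. Combining this with \tref{l:24} at $k=1$---choosing the label map $\jmath$ so that the first coordinate tags the particle initially at $x$---gives \eqref{:27a} and the first equality of \eqref{:27b}, since $\Phi$ sends $(X^{i(x,\mathfrak{s})}_t, \mathfrak{X}_t - \delta_{X^{i(x,\mathfrak{s})}_t})$ to $(X^{i(x,\mathfrak{s})}_t, \mathfrak{Y}^x_t)$.

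For the remaining equality $\zXY(\mathfrak{X} \in \cdot) = \zaY$ I would exploit the translation invariance of $\EXY$ in the $x$-variable (both $D$ and $\xx$ commute with $x$-translations, hence so does $D - \xx$). This forces the $\mathfrak{X}$-marginal of $\PP^{XY}_{(x,\mathfrak{s}')}$ to be independent of $x$, hence itself a Markov process on $\mathfrak{S}$; its generator, computed via a cut-off in $x$ on configuration-only functions (where the $\xx$-derivatives vanish), coincides with the generator of $\Ya$, identifying the marginal with $\PPY$ by uniqueness. The main obstacle I anticipate is the bookkeeping of q.e.\ exceptional sets when transferring \tref{l:24} through $\Phi$---one must compare the capacities $\mathrm{Cap}^{\mu}$, $\mathrm{Cap}^{\nuk}$, and the analogous $\mathrm{Cap}^{XY}$ (the first two related by \eqref{:24a} and \eqref{:24d})---together with the cut-off/localization step in the marginal identification, since constant-in-$x$ functions do not lie in $\LXY$.
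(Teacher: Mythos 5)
Your proposal reproduces the paper's argument: your map $\Phi$ is exactly the transformation $\iota$ of \eqref{:51z}, your chain-rule identity is \eqref{:51d}, the resulting path-level intertwining is \eqref{:51a} from \lref{l:51}\thetag{2}, and composing with \tref{l:24} at $k=1$ gives \eqref{:27a} and the first equality of \eqref{:27b}. For the last equality the paper likewise starts from translation invariance in $x$ to get $x$-independence of the $\mathfrak{X}$-marginal (\lref{l:52}), and it implements precisely the cut-off you flag as an obstacle via the $\rho$-weighted form $\EXY_{\rho,\lambda}$ and the test function $\psi$ in \lref{l:54}, reaching \eqref{:52j}. (One cosmetic discrepancy: \eqref{:51} states $\nu\circ\iota^{-1}=\0$ without your constant $\rho^1$; since that constant rescales $\LXY$ and $\EXY$ identically it does not affect the associated diffusion.)
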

\begin{rem}\label{r:12} 
The total system of interacting Brownian motions is 
a priori given by the diffusion $\PP ^{\mu }$. 
The diffusion $ \PP ^{Y}$ associated with $(\EY ,\dY , \Lmz )$ 
describes the motion of the environment seen from the tagged particle, and 
the diffusion $\PP ^{XY} $ associated with 
$(\EXY ,\dXY ,\LXY )$ corresponds to the motion of the coupling of the tagged particle and the environment seen from the tagged particle. \tref{l:26} and \tref{l:27} were used 
for the proof of the diffusive scaling limit of 
tagged particles of such translation invariant interacting Brownian motions in \cite{o.inv2}. 
\end{rem}

\begin{ex}\label{d:2}
\thetag{1} Let $ \mu $ be a canonical Gibbs measure 
with upper semicontinuous potentials. 
Assume the interaction potentials are 
super stable and regular in the sense of Ruelle. 
We refer to the reader \cite{ruelle2}. 
Then $ \mu $  satisfies 
\thetag{M.1.k} for all $ k $ and \thetag{M.2}. 
\thetag{M.3} is satisfies if $ d\ge 2 $ or the 
interaction potential has repulsive enough. 
See \cite{inu} for the necessary and sufficient condition for this when the number of particles are finite. 
Since the Dirichlet forms of the infinite particle systems are 
decreasing limits of the finite particle systems 
\cite{o.dfa}, Inukai's result gives a sharp sufficient condition of \thetag{M.3}. 
\\
\thetag{2} 
Let $ \mu $ be the Dyson's model in infinite dimension. 
This is a translation invariant probability measure on the one dimensional configuration space. Its correlation functions are given by the determinant of the sine kernel and related to the random matrices called GUE (see \cite{soshi.drpf}, \cite{mehta}). 
This measure satisfies 
\thetag{M.1}--\thetag{M.4}.  
Here \thetag{M.1} is the assumption that means 
\thetag{M.1.$ k $} hold for all $ k=0,1,\ldots $. 
We refer to \cite{o.rm} and \cite{o.col} for the proof of \thetag{M.1} and \thetag{M.2}, respectively. 
\\
\thetag{3} Let $ \mu $ be the Ginibre random point field. 
This is a translation invariant probability measure 
on the two  dimensional configuration space. 
Its correlation functions are given by the determinant of the exponential kernel 
and related to the random matrices called Ginibre Ensemble (see \cite{soshi.drpf}). 
This measure satisfies \thetag{M.1}--\thetag{M.4}. 
(see \cite{o.rm},\cite{o.col}). 
\\
\thetag{4} In \cite{o.rm} we introduce the notion of 
quasi Gibbs measures. This class contains all above examples. 
Measures in this class satisfies \thetag{M.1}. 
\end{ex}

\section{Transfer of Dirichlet spaces. } 
\label{s:3}
This section is devoted to the preparation 
of the proof of \tref{l:24}. 
We begin by considering 
the relation $ \mu $ and $ \nuk $ under 
the map $ \map{\kappa }{{S}^{k}\ts \mathfrak{S}}{\mathfrak{S}}$ 
defined before \tref{l:24}. 
Since these measures $ \mu $ and $ \nuk $ 
are not directly related by 
the map $ \map{\kappa }{{S}^{k}\ts \mathfrak{S}}{\mathfrak{S}}$, 
we consider the finite volume cut off of 
these measures instead.

Let $ S _{r}=\{ x \in {S}\, ;\, |x|<r \}  $ and 
$ \mathfrak{S}_{r,m}=\{ \mathfrak{s}\in \mathfrak{S}\, ;\, 
\mathfrak{s}(S _{r})= m \}$. We define the meaures 
$ \nukr $, $ \nurN $, $ \murk $, and 
$ \murkN $ by 
\begin{align}\label{:30}&
\nukr = \int _{\cdot }1_{S _{r}}(x) d\nuk ,\quad 
&&\nurN = \int _{\cdot }1_{\Srk }(x)
\sum _{m=1}^{N-1} 1_{\mathfrak{S}_{r,m}}(\mathfrak{s}) d\nuk 
\\\label{:!1}&
\murk = \nukr \circ \kappa ^{-1} ,\quad 
&&\murkN = \nurN \circ \kappa ^{-1}
.\end{align}
Let $ m^{[k]}=m(m-1)\cdots(m-k+1) $. 
Then it is not difficult to see that 
\begin{align}\label{:30a} &
\murk = \sum _{m=k}^{\infty} m^{[k]} \ 
\mu (\cdot \cap \mathfrak{S}_{r,m})
,\quad 
\murkN = 
\sum _{m=k}^{N} m^{[k]} \ 
\mu (\cdot \cap \mathfrak{S}_{r,m})
.\end{align}

Let $ \partial S _{r}=\{ |x|=r \}$ and 
$ \partial \mathfrak{S}_{r}= \{ \mathfrak{s}\in \mathfrak{S}; \ 
\mathfrak{s}(\partial S _{r})\ge 1  \}$. 
We remark $ \mathfrak{S}_{r,m}$ are open sets 
and their boundaries $ \partial \mathfrak{S}_{r,m} $ 
are contained in $ \partial \mathfrak{S}_{r}$. 
We define  $ \di ^{\mukr }$ 
in a similar fashion to $ \dnuik $ 
by replacing $ \nuk $ by $ \mukr $. 
Let 
\begin{align}\label{:31a}&
\dmirD = \{ f \in \dmi \, ;\, f(\mathfrak{s})=0 
\text{ if } \mathfrak{s}\in \partial \mathfrak{S}_{r}\}
,\\ \notag &
\dmirDN = 
\{ f \in \dmirD \ ; 
f (\mathfrak{s})=0 \text{ if } 
\mathfrak{s}\not\in  \sum _{m=1}^{N}\mathfrak{S}_{r,m}\ \}
,\\ \notag &
\dmikrDN = 
\{ f \in \di ^{\mukr } \ ; 
f (\mathfrak{s})=0 \text{ if } 
\mathfrak{s}\in \partial \mathfrak{S}_{r}\text{ or }
\mathfrak{s}\not\in  \sum _{m=1}^{N}\mathfrak{S}_{r,m}\ \}
.\\ \label{:31b}&
\dnuirD = 
\{ h \in \dnuik , ;\, 
h (x,\mathfrak{s})= 0\ \text{ if } 
x \not\in \Srk \text{ or } 
\mathfrak{s}\in \partial \mathfrak{S}_{r}\ \} 
,\\ \notag &
\dnuirDN = \{ h \in \dnuirD \, ;\, 
h (x,\mathfrak{s})= 0\ \text{ if } 
\mathfrak{s}\not\in  \sum _{m=0}^{N-1}\mathfrak{S}_{r,m}  \ \} 
.\end{align}
Let $ (\Em ,\dmrD )$ denote the closure of 
$ (\Em ,\dmirD )$ on $ \Lm $. 
We define the closures 
$ (\Em ,\dmrDN )$, 
$ (\E ^{\murk } ,\dmrrDN )$, 
$ (\Enuk ,\dnurD )$, and 
$ (\Enuk ,\dnurDN )$ similarly. 

For an $ h \in \dnuirD $ we set 
$ \hsym \in \dnuirD $ by 
\begin{align}\label{:31c}&
\hsym (x ,\mathfrak{s})= \frac{1}{m!} 
\sum _{\sigma \in \SSSm }
h (x_{\sigma (1)},\ldots,x_{\sigma (k)},
\sum_{i=k+1}^{m} \delta _{ x_{\sigma (i)}}) 
\quad \text{ if }\mathfrak{s}\in \mathfrak{S}_{r}^{m-k}
.\end{align}
Here $ x=(x_1,\ldots,x_k)\in \Srk $, 
$ \mathfrak{s}= 
\sum_{j=k+1}^m \delta _{x_j} \in \mathfrak{S}_{r}^{m-k}$, 
and $ \SSSm  $ is the set 
consisting of the permutations of 
$ (1,\ldots,m) $. 

If $ h = \hsym \in \dnuirD $, 
then one can regard $ h $ as $ h \in \dmirD  $, 
and we denote it by $ h ^{0} $. 
Indeed, $ h ^{0} $ is defined by
$ h ^{0}(\sum_{j=1}^{k}
\delta _{x_j}+\mathfrak{s}):=
h (x , \mathfrak{s}) $ on 
$ \sum_{m=k}^{\infty}\mathfrak{S}_{r}^m $
and by $ h ^{0}= 0  $ if 
$ \mathfrak{s}\not\in \sum_{m=k}^{\infty}\mathfrak{S}_{r}^m $. 
We remark $ \hsym ^{0}\circ \kappa = \hsym  $ by construction. 

Let $ h _{1}$ and $h _{2} \in \dnuirD $. 
Assume $ h _{2,\mathrm{sym}}=h _{2} $. 
Then we have 
\begin{align}\label{:31e}&
\int _{{S}^{k}\ts \mathfrak{S}}h _{1} h _{2} d\nuk = 
\int _{{S}^{k}\ts \mathfrak{S}}h _{1,\mathrm{sym}} h _{2} 
d\nuk =
\int _{\mathfrak{S}}h _{1,\mathrm{sym}} ^{0} h _{2} ^{0} d\murk 
\\ \label{:31h} &
\E ^{\nuk }(h _{1},h _{2})=
\E ^{\nuk }(h _{1,\mathrm{sym}},h _{2})=
\E ^{\murk }(h _{1,\mathrm{sym}}^{0},h _{2} ^{0})
.\end{align}
Let us take $ h _{1}=h $ and 
$ h _{2} = h _{\mathrm{sym}} $ in \eqref{:31h}. 
Then we have  
\begin{align}\label{:3i}&
\E ^{\nuk }(h ,h _{\mathrm{sym}})=
\E ^{\nuk }(h _{\mathrm{sym}},h _{\mathrm{sym}})=
\E ^{\murk }(h _{\mathrm{sym}} ^{0},h _{\mathrm{sym}} ^{0})
.\end{align}
Applying Schwarz's inequality 
to the first equality of \eqref{:3i} yields 
$$ 
\E ^{\nuk }(h ,h ) \ge 
\E ^{\nuk }
(h _{\mathrm{sym}},h _{\mathrm{sym}}). 
$$
Hence we can define 
$ h _{\mathrm{sym}} $ not only for $ h \in \dnuirD $ but also 
for $ h \in \dnurD $ as the limit of the 
$ \{ \E ^{\nuk }_{1}\}^{1/2}$-norm. 
Moreover, by \eqref{:31e} and \eqref{:3i} we have 
\begin{align}\label{:3j}&
\{ h _{\mathrm{sym}} ^{0} ; h \in \dnurD \} = 
\dmrrD 
.\end{align}
Similarly as \eqref{:3j} we have 
\begin{align}\label{:3n}&
\{ h _{\mathrm{sym}} ^{0} ; h \in \dnurDN \} = \dmrrDN 
.\end{align}
Since 
$\mu (\cdot )\le \murk (\cdot )\le N\mu (\cdot )$ 
on $ \sum _{m=1}^{N}\mathfrak{S}_{r,m}$ by 
\eqref{:30a}, we obtain 
\begin{align}\label{:3p}&
\dmrDN = \dmrrDN 
.\end{align}

\section{Identities 
among $ k $-labeled diffusions. } 
\label{s:4}
In this section we assume 
\thetag{M.1.0}, \thetag{M.1.$ k $}, \thetag{M.2} 
and \thetag{M.3}. 
The purpose of this section is to prove 
the identity between the diffusions 
associated with 
the Dirichlet spaces $ \7 $ and 
$ \9 $ introduced in \sref{s:2}. 
This identity is a key to 
the proof of \tref{l:26}. 

\begin{lem} \label{l:41}
Let $ \mathfrak{A}\subset {S}^{k}\ts \mathfrak{S}$ be such that 
$ \kappa ^{-1}(\kappa (\mathfrak{A}))= \mathfrak{A}$. Then 
$ \mathrm{Cap}^{\mu }(\kappa (\mathfrak{A}))= 0 $ implies 
$ \mathrm{Cap}^{\nuk }(\mathfrak{A})= 0 $. 
Here we regard $ \kappa $ as 
$ \map{\kappa }{{S}^{k}\ts \mathfrak{S}}{\mathfrak{S}} $ 
.\end{lem}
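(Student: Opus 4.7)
The plan is to pull back a $\mu$-equilibrium potential of $\kappa(\mathfrak{A})$ on $\mathfrak{S}$ along the continuous map $\kappa$: the saturation hypothesis $\kappa^{-1}(\kappa(\mathfrak{A})) = \mathfrak{A}$ guarantees that the pull-back of any open set containing $\kappa(\mathfrak{A})$ still contains $\mathfrak{A}$, so the geometric side of the capacity bound is automatic. What must be controlled is the $\nu^k$-energy of the pull-back, and for this I will use the energy identities \eqref{:31h} and \eqref{:3n} together with the measure comparison \eqref{:30a}. Since $\kappa$ is not proper for configurations with infinitely many particles, the pull-back only works after localizing to sets of bounded particle number.

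First I would localize: by countable subadditivity of $\mathrm{Cap}^{\nu^k}$ and the exhaustion
\[
S^k \times \mathfrak{S} = \bigcup_{r, N \in \mathbb{N}} \Bigl(S_r^k \times \bigcup_{m=0}^{N} \mathfrak{S}_{r, m}\Bigr),
\]
it suffices to show $\mathrm{Cap}^{\nu^k}(\mathfrak{A}_{r, N}) = 0$ for each $r, N$, where $\mathfrak{A}_{r, N} := \mathfrak{A} \cap (S_r^k \times \bigcup_{m=0}^{N} \mathfrak{S}_{r, m})$. By monotonicity of capacity, $\mathrm{Cap}^\mu(\kappa(\mathfrak{A}_{r, N})) = 0$, and $\kappa(\mathfrak{A}_{r, N}) \subset \bigcup_{m=k}^{k+N} \mathfrak{S}_{r, m}$.

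Given any sequence $f_n \in \dm$ with $f_n \geq 1$ on open $U_n \supset \kappa(\mathfrak{A}_{r, N})$ and $\mathcal{E}^\mu_1(f_n, f_n) \to 0$, I would multiply by a local smooth cutoff $\chi \in \di$ of the form $\chi(\mathfrak{s}) = \phi\bigl(\sum_i \psi(s_i)\bigr)$, with $\psi \in C_0^\infty(S)$ approximating $1_{S_r}$ and $\phi$ a smooth bump equal to $1$ on $[0, k+N]$ and supported in $[0, k+N+1]$; a standard product-rule estimate gives $\chi f_n \in \dmrDN$ with $\mathcal{E}^\mu_1(\chi f_n, \chi f_n) \le C_{r, N}\, \mathcal{E}^\mu_1(f_n, f_n)$, while $\chi f_n \geq 1$ persists on an open neighborhood of $\kappa(\mathfrak{A}_{r, N})$. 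By the domain identity $\dmrDN = \dmrrDN$ from \eqref{:3p} combined with \eqref{:3n}, I then lift $\chi f_n$ to a symmetric $h_n \in \mathcal{D}^{\nu^k, N}_{r, \mathrm{D}}$ via $h_n(x, \mathfrak{s}) := (\chi f_n)(\kappa(x, \mathfrak{s}))$ on the localized region. The energy identity \eqref{:31h} gives $\mathcal{E}^{\nu^k}_1(h_n, h_n) = \mathcal{E}^{\mu^k_r}_1(\chi f_n, \chi f_n)$, and the bound $\mu^k_r \leq (k+N)^k\, \mu$ on the support of $\chi$ from \eqref{:30a} dominates this by $C'_{r, N}\, \mathcal{E}^\mu_1(f_n, f_n) \to 0$. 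Since $h_n \geq 1$ on the open set $\kappa^{-1}(U_n) \cap (S_r^k \times \bigcup_{m=0}^N \mathfrak{S}_{r, m})$, which contains $\mathfrak{A}_{r, N}$ by saturation, we conclude $\mathrm{Cap}^{\nu^k}(\mathfrak{A}_{r, N}) = 0$.

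The main obstacle will be the cutoff construction and the bookkeeping of two distinct particle-count parameters: $N$ on the $\nu^k$-side, counting only the environment particles of $\mathfrak{s}$, versus $k + N$ on the $\mu$-side, counting all particles of the image $\kappa(x, \mathfrak{s})$. Care is also needed at the boundary $\partial S_r$, where the Dirichlet subscript ``$r, \mathrm{D}$'' demands that the cutoff vanish smoothly; this can be enforced by the same type of smoothing in the spatial variable, but introduces a slight shrinking of $S_r$ that must be tracked through the bound.
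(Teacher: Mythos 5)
Your proposal follows the same overall route as the paper's proof: localize to a bounded region, express the capacity with test functions in the Dirichlet-part domain, pull those back along $\kappa$, and control the $\nu^k$-energy via the identities of Section~3 and the explicit form \eqref{:30a} of $\mu^k_r$. The one genuinely different move is that you explicitly localize in the particle-count parameter $N$, whereas the paper's proof only restricts to $S^k_{r-1}\times\mathfrak{S}$ away from $\partial\mathfrak{S}_r$ and then asserts $f\in\dmirD\Rightarrow f\circ\kappa\in\mathcal{D}^{\nu^k}$ without further comment. I think your instinct is right: since $\mu^k_r=\sum_{m\ge k}m^{[k]}\mu(\cdot\cap\mathfrak{S}_{r,m})$ has unbounded Radon--Nikodym density with respect to $\mu$, the bound $\mathcal{E}^{\mu^k_r}_1(f,f)\lesssim\mathcal{E}^\mu_1(f,f)$ used implicitly to transfer the vanishing capacity fails without some restriction to $m\le k+N$, so your $N$-localization fills a genuine gap left terse in the paper. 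This is the most valuable observation in the proposal.

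Where your argument is not yet watertight is the cutoff construction, and the problem is more serious than the boundary-smoothing you flag at the end. With $\chi(\mathfrak{s})=\phi\bigl(\sum_i\psi(s_i)\bigr)$ and $\psi\in C_0^\infty(S)$ ``approximating'' $1_{S_r}$, you cannot simultaneously get $\chi\equiv 1$ on a neighborhood of $\kappa(\mathfrak{A}_{r,N})$ and $\chi\equiv 0$ on $\{\mathfrak{s}(S_r)>k+N+1\}$: if $\psi\le 1_{S_r}$ then $\sum_i\psi(s_i)$ underestimates $\mathfrak{s}(S_r)$ and $\chi$ fails to vanish for configurations with many particles in $S_r\setminus\mathrm{supp}\,\psi$; if $\psi\ge 1_{S_r}$ then $\sum_i\psi(s_i)$ can exceed $k+N$ on $\kappa(\mathfrak{A}_{r,N})$ from particles in $\mathrm{supp}\,\psi\setminus S_r$, so $\chi$ is not identically $1$ there. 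The function $\mathfrak{s}\mapsto\mathfrak{s}(S_r)$ separating the open shells $\mathfrak{S}_{r,m}$ is local but not smooth, and there is no single smooth local $\chi$ that is $1$ on $\bigcup_{m\le k+N}\mathfrak{S}_{r,m}$ and supported in $\bigcup_{m\le k+N+1}\mathfrak{S}_{r,m}$. The cleaner device is the one the paper signals by taking $\mathfrak{A}\subset S_{r-1}^k\times\mathfrak{S}$ with a buffer: localize $\mathfrak{A}$ to $\mathfrak{A}_{r-1,N}$ and build $\chi$ with $\psi$ satisfying $1_{S_{r-1}}\le\psi\le 1_{S_r}$, so the mismatch occurs only in the annulus $S_r\setminus S_{r-1}$ where you have room to absorb it, and the Dirichlet condition at $\partial\mathfrak{S}_r$ is inherited from $\psi$ vanishing there. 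Alternatively, work directly with the parts $(\Em,\dmrDN)$ and use that, for a set at positive distance from $\partial\mathfrak{S}_r\cup\bigcup_{m>N}\mathfrak{S}_{r,m}$, the part-form capacity vanishes iff the full-form capacity does (cf.~\cite[Theorems 4.4.3--4.4.4]{fot}); this is essentially what \eqref{:41b} encodes and avoids building the cutoff by hand.
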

\begin{proof}
Without loss of the generality we can and do assume 
$ \mathfrak{A}\subset {S}^{k}_{r-1} \ts \mathfrak{S}$ and 
$ \mathfrak{A}\cap ({S}^{k}\ts \partial \mathfrak{S}_{r})= \emptyset $ 
for some $ r \in \N $.  
Since the capacity of a set $ B $ is given by the infimum of the capacity of the open sets including $ B $, we can assume without loss of generality that 
$ \kappa (\mathfrak{A}) $ is an open set. 
Then $ \mathfrak{A}$ becomes an open set. So by definition we have 
\begin{align}\label{:41}&
\mathrm{Cap}^{\mu }(\kappa (\mathfrak{A}))=
\inf \{ \E ^{\mu}_1(f,f); f\in \mathcal{D}^{\mu},\ 
f\ge 1 \text{ $ \mu $-a.e.\ on }\kappa (\mathfrak{A})\} 
,\\ \label{:41a} &
\mathrm{Cap}^{\nuk }(\mathfrak{A})=
\inf \{ \E ^{\nuk }_1(g,g); g \in \mathcal{D}^{\nuk },\ 
g \ge 1 \text{ $ \nuk $-a.e.\ on } \mathfrak{A}\} 
.\end{align}
Here $ \E ^{\mu}_1(f,f)= \E ^{\mu}(f,f)+
(f,f)_{\Lm } $ as usual and 
we set $ \E ^{\nuk }_1 $ similarly.

Since 
$ \mathfrak{A}\subset {S}^{k}_{r-1} \ts \mathfrak{S}$ and 
$ \mathfrak{A}\cap ({S}^{k}\ts \partial \mathfrak{S}_{r})= \emptyset $, 
we deduce that 
\begin{align}\label{:41b}&
\mathrm{Cap}^{\mu }(\kappa (\mathfrak{A}))=
\inf \{ \E ^{\mu}_1(f,f); f\in \dmirD ,\ 
f\ge 1 \text{ $ \mu $-a.e.\ on }\kappa (\mathfrak{A})\} 
.\end{align}
If $ f \in \dmirD $, then 
$ f \circ \kappa \in \mathcal{D}^{\nuk } $. Combining this with \eqref{:41}--\eqref{:41b} and 
the assumption 
$ \mathrm{Cap}^{\mu }(\kappa (\mathfrak{A}))= 0 $ 
completes the proof. 
\end{proof}

We consider parts of 
$ \PP ^{\mu }$ and $ \PP ^{\nuk }$. 
We refer to \cite{fot} for the definition of 
a part of Dirichlet space and related results.

Let $ \partial S _{r}=\{ |x|=r \}$ and 
$ \partial \mathfrak{S}_{r}= \{ \mathfrak{s}\in \mathfrak{S}; \ 
\mathfrak{s}(\partial S _{r})\ge 1  \}$ as before. 
Let 
\begin{align}\label{:41e}
\szr (\mathfrak{X})&=\inf \{ t>0 ; 
\mathfrak{X}_t\in \partial \mathfrak{S}_{r}\} 
\\\label{:42ee}
\szrN (\mathfrak{X})&= 
\inf \{ t>0 ; 
\mathfrak{X}_t\in \partial \mathfrak{S}_{r}\text{ or }
\mathfrak{X}_t \not\in  \sum _{m=1}^{N}\mathfrak{S}_{r,m}
\}
\\\label{:41f}
\sor (\mathfrak{X}^{1})&=
\inf \{ t>0 ; X_t \not\in \Srk \text{ or } 
\mathfrak{X}_t\in \partial \mathfrak{S}_{r}\} 
\\\label{:42o}
\sorN (\mathfrak{X}^{1})&=
\inf \{ t>0 ; X_t \not\in \Srk \text{ or } 
\mathfrak{X}_t\in \partial \mathfrak{S}_{r}\text{ or } 
\mathfrak{X}_t \not\in \sum _{m=0}^{N-k}\mathfrak{S}_{r,m}\ \} 
,\end{align}
where 
$ \mathfrak{X}\in C([0,\infty);\mathfrak{S})$ 
and 
$ \mathfrak{X}^{1}=(X,\mathfrak{X})
\in C([0,\infty);{S}^{k}\ts \mathfrak{S})$. 

Let $ \mathfrak{X}^{\szr }=
\{ \mathfrak{X}_{t\wedge \szr } \}$ and 
define $ \mathfrak{X}^{\szrN } $, 
$ \mathfrak{X}^{1,\sor } $, and 
$ \mathfrak{X}^{1,\sorN } $ in a similar fashion.  
Let $ \PP ^{\mu ,\szr } = 
(\{\PP ^{\mu }_{\mathfrak{s}}\}_{\mathfrak{s}\in\mathfrak{S}}
,\mathfrak{X}^{\szr })$. Set 
$ \PP ^{\mu ,\szrN }$, 
$ \PP ^{\nuk ,\sor }$, and 
$ \PP ^{\nuk ,\sorN }$ similarly. 
Then 
$ \PP ^{\mu ,\szr }$, 
$ \PP ^{\mu ,\szrN }$, 
$ \PP ^{\nuk ,\sor }$, and 
$ \PP ^{\nuk ,\sorN }$ are diffusions associated with the Dirichlet spaces 
$(\Em , \dmrD ,\Lm )$, $(\Em , \dmrDN ,\Lm )$, 
$ (\Enuk , \dnurD ,\Lnuk )$, 
and 
$ (\Enuk , \dnurDN ,\Lnuk )$, respectively.  
Let 
$ \PP ^{\mu ,\szr }_{\mathfrak{s}}= 
\PP ^{\mu }_{\mathfrak{s}}(\mathfrak{X}^{\szr }\in \cdot )$.  We set 
$ \PP ^{\mu ,\szrN }_{\mathfrak{s}}$, 
$ \Pxt ^{\nuk ,\sor }$, 
and 
$ \Pxt ^{\nuk ,\sorN }$ similarly. 
We note that these are the distributions of 
$ \PP ^{\mu ,\szr }$, 
$ \PP ^{\mu ,\szrN }$, 
$ \PP ^{\nuk ,\sor }$, and 
$ \PP ^{\nuk ,\sorN }$, respectively. 
Let 
$ \mathrm{Cap}^{\mu ,\szr } $ and  
$ \mathrm{Cap}^{\mu ,\szrN } $ 
be the capacities of 
$ \PP ^{\mu ,\szr }$ and  
$ \PP ^{\mu ,\szrN }$, respectively. 
\begin{lem} \label{l:42} 
Assume 
\thetag{M.1.0}, \thetag{M.1.$ k $}, 
\thetag{M.2}, and \thetag{M.3}. 
Then there exists 
$ \Akr \subset \Srk \ts \mathfrak{S}$ such that 
\begin{align}\label{:42a}&
 \kappa ^{-1}(\kappa (\Akr ))= \Akr ,\ 
\kappa (\Akr )\subset \mathfrak{S}_{\mathrm{single}}
,\\\label{:42d}&
\mathrm{Cap}^{\mu ,\szr }
(\kappa (\Srk \ts \mathfrak{S})\backslash 
\kappa (\Akr ))= 0 
,\\\label{:42zz}&
\PP ^{\mu ,\szr } _{\kappa (x,\mathfrak{s})}
(\mathfrak{X}_t\in \kappa (\Akr ) 
\text{ for all } t)= 1 
\quad \text{ for all }
(x,\mathfrak{s})\in \Akr 
,\\\label{:42z}&
\PP ^{\mu ,\szr } _{\kappa (x,\mathfrak{s})}
= 
\Pxt ^{\nuk ,\sor } \circ \kappa ^{-1} 
\quad \text{ for all }
(x,\mathfrak{s})\in \Akr 
.\end{align}
\end{lem}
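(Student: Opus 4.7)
The plan is to establish the identity first at the $N$-particle cut-off level on a set $\AkrN$, and then pass to the limit $N\to\infty$. Denote by $\PP^{\mu,\szrN}$ and $\PP^{\nuk,\sorN}$ the part diffusions associated with $(\Em,\dmrDN,\Lm)$ and $(\Enuk,\dnurDN,\Lnuk)$, respectively.

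Observe first that on the event $\{t<\szrN\}$ the particle count $\mathfrak{X}_t(S _{r})$ is constant in $t$, since any change would force $\mathfrak{X}_t\in\partial\mathfrak{S}_r$. Hence the killed process decomposes into independent pieces on the cells $\mathfrak{S}_{r,m}$ for $1\le m\le N$. On each such cell $\mukr=m^{[k]}\mu$ by \eqref{:30a}, so $\E^{\mukr}=m^{[k]}\Em$ and $\mukr=m^{[k]}\mu$ there; together with \eqref{:3p} this identifies $(\Em,\dmrDN,\Lm)$ and $(\E^{\mukr},\dmrrDN,L^2(\mukr))$ as Dirichlet spaces of the same Markov diffusion, only the reference measure being rescaled cellwise.

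Next, relate the $\nuk$-side to the $\mukr$-side via the symmetrization of \sref{s:3}. For $h\in\dnurDN$ one has $\hsym=\hsym^{0}\circ\kappa$, and \eqref{:31h}--\eqref{:3i} together with the surjectivity \eqref{:3n} make $h\mapsto\hsym^{0}$ a contraction from $(\Enuk,\dnurDN)$ onto $(\E^{\mukr},\dmrrDN)$ whose adjoint is pullback by $\kappa$. By the standard theory of image Dirichlet forms under projection maps \cite{fot}, the $\kappa$-image of $\PP^{\nuk,\sorN}$ is the diffusion associated with $(\E^{\mukr},\dmrrDN,L^2(\mukr))$, which by the previous step equals $\PP^{\mu,\szrN}$ outside a $\mu$-capacity-zero set. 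Using \thetag{M.3} and \lref{l:22} to confine the unlabeled process to $\hat{\mathfrak{S}}_{\mathrm{single}}$, and \lref{l:41} to lift $\mu$-polar exceptional sets to $\nuk$-polar ones, we obtain a $\kappa$-saturated $\AkrN\subset\Srk\ts\mathfrak{S}$ with $\kappa(\AkrN)\subset\mathfrak{S}_{\mathrm{single}}$ on which $\PP^{\mu,\szrN}_{\kappa(x,\mathfrak{s})}=\Pxt^{\nuk,\sorN}\circ\kappa^{-1}$ holds.

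Finally, let $N\to\infty$. Assumption \thetag{M.2} gives $\mathfrak{s}(S _{r})<\infty$ for $\mu$-a.e.\ $\mathfrak{s}$, whence $\szrN\nearrow\szr$ and $\sorN\nearrow\sor$ quasi-everywhere, and the corresponding part processes converge on every finite time interval. Setting $\Akr=\bigcup_{N\ge k}\AkrN$ and excluding a further capacity-zero set delivers \eqref{:42a}--\eqref{:42z}. The main obstacle is the second step: identifying the $\kappa$-image of $\PP^{\nuk,\sorN}$ as the diffusion generated by $(\E^{\mukr},\dmrrDN)$ is delicate because $\kappa$ is non-injective and the image measure weights configurations by $m^{[k]}$; one must argue that quasi-regularity and path continuity are both inherited by the image, so that the full law equality \eqref{:42z}, not merely the equality of Dirichlet forms, holds outside an explicitly controlled polar set.
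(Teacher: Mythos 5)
Your overall architecture matches the paper's proof exactly: cut off to the $N$-particle pieces $\mathfrak{S}_{r,m}$, use the cellwise proportionality $\mukr = m^{[k]}\mu$ on $\mathfrak{S}_{r,m}$ together with \eqref{:3p} to identify $\PP^{\mu,\szrN}$ with the diffusion of $(\E^{\murk},\dmrrDN)$, transfer via the symmetrization operator of \sref{s:3}, and send $N\to\infty$ (the paper takes $\liminf_N\AkrN$ rather than $\bigcup_N\AkrN$, but this is cosmetic since, for a fixed $(x,\mathfrak{s})$ with $m_0$ particles in $S_r$, the cutoff is immaterial once $N\ge m_0$).

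The genuine gap is precisely the one you flag at the end: you invoke a ``standard theory of image Dirichlet forms under projection maps'' from \cite{fot} to conclude that the $\kappa$-image of $\PP^{\nuk,\sorN}$ is the diffusion associated with $(\E^{\murk},\dmrrDN,\Lmr)$. No such off-the-shelf theorem applies here, because $\kappa$ is not a projection in the usual sense, and the relevant transfer goes through the symmetrization $h\mapsto\hsym^0$ (a conditional expectation onto symmetric functions followed by relabelling), not a direct pushforward. The paper closes this gap by a concrete computation: it writes the weak form of the semigroup $T^{\nuk,N}_{r,\mathrm{D},t}$ as in \eqref{:42f}, applies the symmetrization identities \eqref{:31e}, \eqref{:31h}, \eqref{:3n} to obtain \eqref{:42g}, and reads off the explicit intertwining
\begin{align*}
T^{\murk,N}_{r,\mathrm{D},t}\bigl(f_{\mathrm{sym}}^{0}\bigr)
= \bigl(T^{\nuk,N}_{r,\mathrm{D},t}f\bigr)_{\mathrm{sym}}^{0},
\end{align*}
which is equation \eqref{:42v}. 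This semigroup identity, combined with \thetag{M.3} to control the polar exceptional sets, is what yields the law-level equality \eqref{:42w}. You correctly enumerate the ingredients and correctly diagnose the delicacy, but the appeal to a nameless general theory does not constitute a proof of that step; the explicit intertwining \eqref{:42v} is what has to be established.
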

\begin{proof} 
If for each $ N\in \N $ there exists a set 
$\AkrN \subset \Srk \ts \sum _{m=0}^{N-k}\mathfrak{S}_{r,m}$ such that 
\begin{align} \label{:42yyy}&
\kappa ^{-1}(\kappa (\AkrN ))= \AkrN 
,\ 
\kappa (\AkrN )\subset \mathfrak{S}_{\mathrm{single}}
,\\\label{:42y}&
\mathrm{Cap}^{\mu ,\szrN }
(\kappa (\Srk \ts \sum _{m=0}^{N-k}\mathfrak{S}_{r,m})\backslash \kappa (\AkrN )) = 0 
,\\\label{:42y]}&
\PP ^{\mu ,\szrN } _{\kappa (x,\mathfrak{s})}
(\mathfrak{X}_t\in \kappa (\AkrN ) 
\text{ for all } t)= 1 
\quad \text{ for all }
(x,\mathfrak{s})\in \AkrN 
,\\\label{:42xxx}&
\PP ^{\mu ,\szrN } _{\kappa (x,\mathfrak{s})} = 
\Pxt ^{\nuk ,\sorN } \circ \kappa ^{-1} 
\quad \text{ for all } (x,\mathfrak{s})\in \AkrN 
,\end{align}
then $ \Akr := \liminf_{N\to\infty }\AkrN $ 
satisfies \eqref{:42a}--\eqref{:42z}. 
Hence it only remains to prove 
such an $ \AkrN $ exists for each $ N $. 

Recall that $ \PP ^{\nuk ,\sorN } $ 
is the diffusion associated with 
$ (\Enuk , \dnurDN ,\Lnuk )$. 
Let 
$ T_{r,\mathrm{D},t}^{\nuk ,N} $ 
be the semigroup associated with 
$ \PP ^{\nuk ,\sorN } $. 
Then for $ f $ and $ g \in \dnurDN $ 
\begin{align}\label{:42f}&
\int _{{S}^{k}\ts \mathfrak{S}}
T_{r,\mathrm{D},t}^{\nuk ,N} 
f \cdot g  d\nuk -
\int _{{S}^{k}\ts \mathfrak{S}}
f \cdot g  d\nuk + 
\int_0^{t} \Enuk (T_{r,\mathrm{D},u}^{\nuk ,N}
f ,g )du 
= 0
.\end{align}
Now suppose $ g _{\mathrm{sym}}=g  $. 
Then by \eqref{:31e} and \eqref{:31h} we have 
\begin{align}\label{:42g}&
\int _{\mathfrak{S}}
(T_{r,\mathrm{D},t}^{\nuk ,N} f )_{\mathrm{sym}} ^{0}
\cdot g  ^{0} d\murk -
\int _{\mathfrak{S}}
f _{\mathrm{sym}} ^{0}\cdot g  ^{0} d\murk 
+ \int_0^{t}\E ^{\murk } (
(T_{r,\mathrm{D},u}^{\nuk ,N}f )
_{\mathrm{sym}} ^{0},g  ^{0}) du =0
.\end{align}
Let $ T_{r,\mathrm{D},t}^{\murk ,N}$ 
be the semigroup associated with 
$(\E ^{\murk }, \dmrrDN )$ on $\Lmr $. 
Then by \eqref{:3n} and \eqref{:42g} we have 
\begin{align}\label{:42v}&
T_{r,\mathrm{D},t}^{\murk ,N} 
(f _{\mathrm{sym}} ^{0})= 
(T_{r,\mathrm{D},t}^{\nuk ,N}f )_{\mathrm{sym}} ^{0}
.\end{align}
Let $ \mathrm{Cap}^{\murk ,\szrN } $ and 
$ \PP ^{\murk ,\szrN }  $ 
be the capacity and the diffusion associated with 
the Dirichlet space 
$(\E ^{\murk }, \dmrrDN ,\Lmr )$, respectively. 
Then by \eqref{:42v} together with \thetag{M.3}, 
we deduce that there exists 
$ \AkrN \subset \Srk \ts \sum _{m=0}^{N-k}\mathfrak{S}_{r,m}$ 
satisfying \eqref{:42yyy} and 
\begin{align}\label{:42yy}&
\mathrm{Cap}^{\murk ,\szrN }
(\kappa (\Srk \ts \sum _{m=0}^{N-k}\mathfrak{S}_{r,m})\backslash 
\kappa (\AkrN ))= 0 
\\ \label{:42y]]} &
\PP ^{\murk ,\szrN } _{\kappa (x,\mathfrak{s})}
(\mathfrak{X}_t\in \kappa (\AkrN ) 
\text{ for all } t)= 1 
\quad \text{ for all }
(x,\mathfrak{s})\in \AkrN 
,\\\label{:42w}&
\PP ^{\murk ,\szrN } _{\kappa (x,\mathfrak{s})} 
= 
\Pxt ^{\nuk ,\sorN } \circ \kappa ^{-1} 
\quad \text{ for all }
(x,\mathfrak{s})\in \AkrN 
. \end{align}

Recall that the diffuions 
$ \PP ^{\mu ,\szrN }  $ and 
$ \PP ^{\murk ,\szrN } $ 
in \eqref{:42xxx} and \eqref{:42w} 
are associated with the Dirichlet spaces 
$(\Em , \dmrDN ,\Lm )$ 
and 
$(\E ^{\murk }, \dmrrDN ,\Lmr )$, respectively. 
Note that $ \dmrDN = \dmrrDN $ by \eqref{:3p}. 
Moreover, these two  Dirichlet spaces have the common state space $ \sum _{m=1}^{N}\mathfrak{S}_{r,m} $. 
On each connected component $ \{\mathfrak{S}_{r,m}\}$ of the state space, the measures $ \mu $ and $ \murk $ 
are constant multiplication of each other. 
Hence the associated diffusions 
are the same until they hit the boundary. 
Since these Dirichlet forms enjoy 
the Dirichlet boundary conditions, 
we see that eventually 
these two Dirichlet spaces define 
the same diffusion. 
This combined with \eqref{:42yy}--\eqref{:42w} 
we obtain \eqref{:42y}--\eqref{:42xxx}, 
respectively. 

We therefore deduce that $ \AkrN $ $(N \in \N )$ 
satisfy \eqref{:42yyy}--\eqref{:42xxx}, which completes the proof of \lref{l:42}. 
\end{proof}

Let $ r(i)= r $ if $ i $ is odd, and 
$ r(i)=r+1 $ if $ i $ is even. 
Let for $ i \ge 2 $ 
\begin{align}\label{:43e}&
\szir (\mathfrak{X})= 
\inf \{ t>\sziir ; \mathfrak{X}_t\in 
\partial \mathfrak{S}_{r(i)} \} 
\\\label{:43f}&
\soir (\mathfrak{X}^{1})= 
\inf \{ t>\soiir ; X_t\in \partial \Sri  
\text{ or } 
 \mathfrak{X}_t \in \partial \mathfrak{S}_{r(i)} \} 
,\end{align}
where we set 
$ \bar{\sigma } ^{a}_{1}=\sigma ^{a}_r $ 
($ a=0,1 $). 
For $ \mathfrak{X}=
\{ \sum_{i}\delta _{X_t^i}\}
\in C([0,\infty);\mathfrak{S})$ satisfying 
$\mathfrak{X}_t \in \mathfrak{S}_{\mathrm{single}}$ 
for all $ t $ and 
$ \mathfrak{X}_0=\sum_{i}\delta _{x_i} $, 
we choose the first $ k $-particles 
$$ \{\bar{X}_t\} = 
\{ (X_t^{1},\ldots,X_t^{k}) \} 
\in C([0,\infty);{S}^{k})$$
such that 
$ \bar{X}_0 = x =(x_1,\ldots,x_k)$ 
and that 
$ \mathfrak{X}_t= \sum_{j=1}^k\delta _{X_t^{j}}+ 
\sum_{j>k}\delta _{X_t^{j}}$. 

Let 
$ \Omegaiz =\{ \omega \, ;\, 
\bar{X}_{\szir }(\omega )\in \Srr \}$ and  
$ \Omegaione =\{ \omega \, ;\, 
X_{\soir }(\omega )\in \Srr \}$. Let 
\begin{align}\label{:4a}&
\Omegak = \bigcap_{i=1}^{\infty }\Omegaik ,\quad 
\bar{\sigma }^{a}_{\infty }=\limi{i} \sair 
\quad (a=0,1).
\end{align}
Since 
$ X_{\soir }\in \Srr $ on $ \Omegaione $, 
$ \Srr \subset {S}^{k}_{r+1} $, and 
$ {S}^{k}_{r+1}\cap \partial {S}^{k}_{r+1} 
= \emptyset $, we deduce that 
\begin{align}\label{:4b}&
X_t(\omega )\in {S}^{k}_{r+1} 
\quad \text{ for all }
0 \le t < \bar{\sigma }^{1}_{\infty } (\omega )
,\ \text{ for all }
\omega \in \Omegaone 
.\end{align}
\begin{lem} \label{l:43} 
Assume 
\thetag{M.1.0}, \thetag{M.1.$ k $}, 
\thetag{M.2}, and \thetag{M.3}. 
Let $ \Akr $ be as in \lref{l:42}. 
Then for all $ (x,\mathfrak{s}) \in \Akr $ 
the following holds. 
\\ \thetag{1} 
$ \bar{\sigma }^{0}_{\infty }
=\infty $ for 
$ \Pm _{\kappa (x,\mathfrak{s})}
(\cdot \ ; \Omegaz )
\text{-a.e.\ }\omega $, 
\\
\thetag{2} 
$ \bar{\sigma }^{1}_{\infty }=\infty $ for 
$ \Pxtnu (\cdot \ ; \Omegaone )
\text{-a.e.\ }\omega  $. 
\end{lem}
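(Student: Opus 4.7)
My plan is to derive \thetag{1} from \thetag{2} via the pushforward identity in \lref{l:42}, and then prove \thetag{2} by combining the $N$-truncation of \lref{l:42} with an upcrossing estimate applied to the individual particle trajectories.

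For the reduction, equation \eqref{:42z} gives $\PP^{\mu,\szr}_{\kappa(x,\mathfrak{s})}=\Pxt^{\nuk,\sor}\circ\kappa^{-1}$, so the pushforward of the labeled diffusion by $\kappa$ is the killed unlabeled diffusion. Under this pushforward the stopping time $\soir$ transforms into $\szir$, because at $\soir$ either a first-$k$ particle or some other particle hits $\partial S_{r(i)}$, and after applying $\kappa$ both possibilities collapse into $\mathfrak{X}_t\in\partial\mathfrak{S}_{r(i)}$. Moreover $\Omegaone$ becomes $\Omegaz$: the condition $X_{\soir}\in\Srr$ at every $i$ is equivalent, on the unlabeled side, to the statement that the triggering particle is not among the first $k$, i.e., $\bar X_{\szir}\in\Srr$. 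Thus \thetag{2} immediately yields \thetag{1}.

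For \thetag{2}, I would fix $N$ and $(x,\mathfrak{s})\in\AkrN$, and work with the truncated diffusion $\Pxt^{\nuk,\sorN}$; up to $\sorN$, at most $N$ particles sit inside $S_r$. Iterating this truncation by repeatedly restarting at each $\soir$ with a fresh $N$-truncation produces iterated stopping times $\bar\sigma^{1,N}_i$ and keeps the analysis in a finite-particle, bounded-volume regime. In this regime Fukushima's decomposition for $(\Enuk,\dnurDN,\Lnuk)$ applied to coordinate-cutoff functions makes each relevant particle trajectory $X^j$ a continuous semimartingale whose quadratic variation is comparable to elapsed time. Doob's upcrossing inequality then yields only finitely many crossings of $[r,r+1]$ by $|X^j_t|$ on any bounded $[0,T]$. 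Since every $\bar\sigma^{1,N}_i$ on $\Omegaone$ is triggered by such a crossing of one of the finitely many relevant particles, at most finitely many $\bar\sigma^{1,N}_i$ lie in $[0,T]$, and so $\bar\sigma^{1,N}_\infty=\infty$ almost surely. Passing to the limit $N\to\infty$ via $\Akr=\liminf_{N\to\infty}\AkrN$ gives the same conclusion on $\Akr$.

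The main obstacle is the semimartingale/upcrossing step under the iterated $N$-truncation: one must verify that every particle whose trajectory touches $\partial S_r$ or $\partial S_{r+1}$ in $[0,T]$ admits a continuous semimartingale decomposition with finite quadratic variation on $[0,T]$, and that those finitely many trajectories exhaust the sources of the stopping events. This requires a careful application of Fukushima's theorem for the $k$-labeled Dirichlet form in finite volume together with a strong-Markov composition at each $\soir$; once this is in place, the upcrossing inequality is classical and the passage to the limit in $N$ is immediate from $\Akr=\liminf\AkrN$.
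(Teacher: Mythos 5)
Your proposal takes a genuinely different route from the paper, and the route you have chosen has a gap that I do not see how to close with the machinery you invoke.

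The paper's proof of \thetag{1} (and, by a symmetric argument, of \thetag{2}) is a short capacity argument that exploits the alternating radii $ r(i)\in\{r,r+1\} $: on $ \{\bar{\sigma}^0_\infty<\infty\} $ the sample--path continuity together with \eqref{:43e} and \eqref{:4a} forces the configuration at the limiting time to lie in $ \partial\mathfrak{S}_r\cap\partial\mathfrak{S}_{r+1} $, i.e.\ to have particles simultaneously on both spheres $ |x|=r $ and $ |x|=r+1 $. Because the correlation functions are locally bounded by \thetag{M.2}, this set has $ \mathrm{Cap}^\mu $ zero, so by the general theory (\cite[Theorem 4.2.1(ii)]{fot}) the event is impossible. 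No semimartingale decomposition, no upcrossing bound, and no particle counting appear; and \thetag{M.2} is used in an essential way, which your argument never invokes.

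The central step of your proposal is the claim that ``every $ \bar\sigma^{1,N}_i $ on $ \Omegaone $ is triggered by a crossing of $[r,r+1]$ by one of finitely many relevant particles.'' This is false. Between $ \bar\sigma^{0}_{2j-1} $ (some particle on $ \partial S_r $) and $ \bar\sigma^{0}_{2j} $ (some particle on $ \partial S_{r+1} $), no particle need traverse the annulus; the first event can be caused by a particle oscillating near radius $ r $ and the second by a different particle oscillating near radius $ r+1 $ (e.g.\ approaching $\partial S_{r+1}$ from the outside). In fact one can have $ p_j $ touch $ \partial S_r $ at $ \bar\sigma^0_{2j-1} $ and $ q_j $ touch $ \partial S_{r+1} $ at $ \bar\sigma^0_{2j} $ with all $ p_j,q_j $ distinct and no upcrossings at all, so the pigeonhole has nothing to act on. Moreover, the $N$-truncation controls $ \mathfrak{s}(S_r) $ at each fixed time, not the total number of distinct particles that visit a neighbourhood of $\partial S_r\cup\partial S_{r+1}$ over a time interval; particles may enter, trigger a stopping time, and leave while the instantaneous count stays $\le N$, and particles near $ \partial S_{r+1} $ are not counted by $ \mathfrak{s}(S_r) $ at all. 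Separately, the Fukushima decomposition produces a martingale plus a zero--energy additive functional which in general is not of bounded variation, so the coordinate processes are not classical semimartingales and Doob's upcrossing inequality does not apply as stated (though, since the finiteness of upcrossings of a nondegenerate interval by a continuous path on a compact time interval is a deterministic fact, that particular technology is unnecessary in any case). Finally, your reduction of \thetag{1} from \thetag{2} via the pushforward needs the iterated version of \eqref{:42z} along the stopping times $ \soir $; the paper establishes that iterated identity in \lref{l:44} using the strong Markov property, and in fact uses \lref{l:43} to pass from finite $ i $ to $ i=\infty $, so one must be careful to use only the finite-$i$ identity in this reduction to avoid circularity. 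None of these issues appear in the paper's approach, which deduces the result directly from the capacity estimate on $ \partial\mathfrak{S}_r\cap\partial\mathfrak{S}_{r+1} $ for both \thetag{1} and \thetag{2}.
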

\begin{proof}
Let 
$ \partial \mathfrak{S}_{r}=\{ \mathfrak{s}\, ;\, 
\mathfrak{s}(\partial S _{r})\ge 1 \} $ as before. 
Then by the continuity of the sample paths, 
\eqref{:43e} and \eqref{:4a}, we deduce 
\begin{align}\label{:43a}&
\mathfrak{X}_{\bar{\sigma }^{0}_{\infty }} 
= \limi{i}\mathfrak{X}_{\bar{\sigma }^{0}_{i} }
\in \partial \mathfrak{S}_{r}\cap \partial \mathfrak{S}_{r+1}   
\quad \text{on }
\{ \bar{\sigma }^{0}_{\infty }<\infty \} 
.\end{align}

Suppose  
$\Pm _{\kappa (x,\mathfrak{s})}
(\bar{\sigma }^{0}_{\infty }<\infty \ 
;\Omegaz)>0 $. Then  by \eqref{:43a} we have 
\begin{align}\label{:43k}&
\Pm _{\kappa (x,\mathfrak{s})} 
(\mathfrak{X}_{\bar{\sigma }^{0}_{\infty }} 
\in \partial \mathfrak{S}_{r}\cap \partial \mathfrak{S}_{r+1}  
 \ ; \Omegaz )>0 
.\end{align}
Hence $ \int_{\mathfrak{S}}\Pm _{\mathfrak{s}}
(\sigma _{\partial \mathfrak{S}_{r}\cap \partial \mathfrak{S}_{r+1}}
< \infty )\mu (d\mathfrak{s}) > 0 $, where  
$ \sigma _{\partial \mathfrak{S}_{r}\cap \partial \mathfrak{S}_{r+1}} $ is the first hitting time to the set 
$\partial \mathfrak{S}_{r}\cap \partial \mathfrak{S}_{r+1}$. 
By the general theory of Dirichlet forms 
(see \cite[Theorem 4.2.1. (ii)]{fot}) 
it follows from this that 
\begin{align}\label{:43b}&
\text{Cap}^{\mu }
(\partial \mathfrak{S}_{r}\cap \partial \mathfrak{S}_{r+1} )> 0 
.\end{align}

On the other hand, since the $ n $-correlation functions $ \rho ^n $ 
of $ \mu $ are locally bounded by \thetag{M.2}, 
it is not difficult to see that 
$ \text{Cap}^{\mu }
(\partial \mathfrak{S}_{r}\cap \partial \mathfrak{S}_{r+1} )=0 $. 
This contradicts \eqref{:43b}.  
Hence we obtain 
$\Pm _{\kappa (x,\mathfrak{s})}
(\bar{\sigma }^{0}_{\infty }<\infty \ 
;\Omegaz)= 0 $, which implies \thetag{1}. 
The proof of \thetag{2} is similar to that of \thetag{1}. 
\end{proof}

Let 
\begin{align}\label{:44e}&
\tau ^{0}_{r,x} (\mathfrak{X})= 
\inf \{ t>0 ; \bar{X}_t \in \partial \Srk \} 
,\\\label{:44f}&
\tau ^{1}_r (\mathfrak{X}^{1})= 
\inf \{ t>0 ; X_t\in \partial \Srk \} 
.\end{align}
\begin{rem}\label{r:41}
The stopping times 
$ \szir ,\ \soir $ and $ \ \tau ^{1}_r  $ 
are the hitting times 
to the subsets of the state spaces. 
So one can relate the stopped processes 
to the parts of Dirichlet forms. 
However, $ \tau ^{0}_{r,x} $ is not a hitting time 
to any subset in the state space $ \mathfrak{S}$. 
So one can not relate 
the associated stopped process 
to a part of the Dirichlet form, 
which is the reason we prepare 
\lref{l:42} before \lref{l:44}. 
\end{rem}

\begin{lem} \label{l:44} 
Assume 
\thetag{M.1.0}, \thetag{M.1.$ k $}, 
\thetag{M.2}, and \thetag{M.3}. 
Let $ \Akr $ be as in \lref{l:42}. 
Then 
\begin{align}\label{:44}&
\PP ^{\mu ,\tauzrx  } _{\kappa (x,\mathfrak{s})} = 
\Pxt ^{\nuk ,\tauor  } \circ \kappa ^{-1} 
\quad \text{ for all }(x,\mathfrak{s})
\in \Akr \cap \Akrr 
.\end{align}
Let $ \tau ^{0}_{\infty ,x}=
\limi{r}\tau ^{0}_{r,x} $ and 
$ \tau ^{1}_{\infty }=\limi{r}\tau ^{1}_r $. 
Then 
\begin{align}\label{:44z}&
\PP ^{\mu ,\tau ^{0}_{\infty ,x} } 
_{\kappa (x,\mathfrak{s})}
= 
\Pxt ^{\nuk ,\tau ^{1}_{\infty } } \circ \kappa ^{-1} 
\quad \text{ for all }(x,\mathfrak{s}) 
\in \liminf _{r\to\infty }\Akr 
.\end{align}
\end{lem}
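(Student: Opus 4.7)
The plan is to extend the identity of \lref{l:42} to the iterated alternating stopping times $\szir$, $\soir$ by a strong Markov bootstrap, and then to pass to the limit via \lref{l:43}.

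First, for $(x,\mathfrak{s})\in\Akr\cap\Akrr$, \lref{l:42} applied at radius $r$ yields $\PP^{\mu,\szr}_{\kappa(x,\mathfrak{s})}=\Pxt^{\nuk,\sor}\circ\kappa^{-1}$, and the confinement \eqref{:42zz} guarantees that $\mathfrak{X}_{\szr}\in\kappa(\Akr)$ almost surely, and likewise $\mathfrak{X}^{1}_{\sor}\in\Akr$. Since $\szir=\szr$ and $\soir=\sor$ when $i=1$ by convention, I would then invoke the strong Markov property at these times together with \lref{l:42} at radius $r+1$, which is legitimate because the restart configurations can be arranged to lie in $\Akrr$ after refining the good set against the exceptional set of \eqref{:42d}. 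This pushes the identity up to $\bar\sigma^{0}_{2}$, $\bar\sigma^{1}_{2}$. Iterating the alternation gives, for every $i\in\N$,
\begin{equation*}
\PP^{\mu,\szir}_{\kappa(x,\mathfrak{s})}=\Pxt^{\nuk,\soir}\circ\kappa^{-1},\qquad(x,\mathfrak{s})\in\Akr\cap\Akrr.
\end{equation*}

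Next, \lref{l:43} delivers $\szir\uparrow\infty$ on $\Omegaz$ under $\Pm_{\kappa(x,\mathfrak{s})}$ and $\soir\uparrow\infty$ on $\Omegaone$ under $\Pxtnu$, both events carrying full probability. Under the coupling produced by the iteration, the first $k$ labeled particles $\bar X$ of $\mathfrak{X}$ coincide with $X$, so the events $\Omegaz$ and $\Omegaone$ correspond under $\kappa$. Letting $i\to\infty$ promotes the iterated identity to the full, unstopped paths on these full-probability events. Applying the stopping functionals $\omega\mapsto\omega^{\tauzrx}$ on the left and $\omega\mapsto\omega^{\tauor}$ on the right, and using the pathwise relation $\tauzrx\circ\kappa=\tauor$ (which holds precisely because $\bar X$ and $X$ agree under $\kappa$), one obtains \eqref{:44}.

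For \eqref{:44z}, any $(x,\mathfrak{s})\in\liminf_{r\to\infty}\Akr$ lies in $\Akr\cap\Akrr$ for all sufficiently large $r$, so \eqref{:44} applies at every such $r$. Since $\tauzrx\uparrow\tau^{0}_{\infty,x}$ and $\tauor\uparrow\tau^{1}_{\infty}$ monotonically, and sample paths in $\mathfrak{S}$ and $S^{k}\ts\mathfrak{S}$ are continuous, the identities transfer to the limit.

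The principal obstacle is the inductive step: verifying that after each stopping time $\szir$ the stopped configuration lies in a good set on which \lref{l:42} can be reapplied at the next radius. This forces the exceptional set to be shrunk at every iteration while maintaining the full-capacity property uniformly in $i$, which is a standard but careful bookkeeping argument combining \eqref{:42a}--\eqref{:42zz} with the quasi-continuity of the associated Dirichlet processes.
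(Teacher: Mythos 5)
Your plan goes wrong in two linked places, and they are exactly the places where the real work lies. First, you assert that the events $\Omegaz$ and $\Omegaone$ ``carry full probability''; they do not. \lref{l:43} only states that $\bar{\sigma}^{a}_{\infty}=\infty$ holds almost everywhere \emph{on} $\Omega^{a}_{\infty}$, not that $\Omega^{a}_{\infty}$ is a.s.\ all of $\Omega$. In fact $\Omegaone^{c}$ is precisely the event on which the labeled component $X$ actually leaves $\Srk$ at some alternation step, which is the non-trivial case for the exit time $\tau^{1}_r$ --- dropping it eliminates the heart of the lemma. Second, and related, your claimed unconditional iteration $\PP^{\mu,\szir}_{\kappa(x,\mathfrak{s})}=\Pxt^{\nuk,\soir}\circ\kappa^{-1}$ is not what the bootstrap yields. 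At the odd steps the strong Markov restart requires the stopped position to lie in $\Akr\subset\Srk\ts\mathfrak{S}$, hence $X_{\bar{\sigma}^{1}_{i-1}}\in S^{k}_{r}$; once $X$ has left $S^{k}_{r}$ (i.e.\ once you are outside $\Omega^{1}_{i}$) this hypothesis fails and \lref{l:42} can no longer be invoked at radius $r$. The iteration therefore only gives the conditional identity restricted to $\Omegaz$ and $\Omegaone$, which is what the paper records as \eqref{:44a}.

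The paper closes the gap by an explicit decomposition $\Omega^{a}=\Omegaa+\sum_{i=1}^{\infty}\Omegaiastar$: on $\Omegaa$ the alternation runs forever and $\bar{\sigma}^{a}_{\infty}=\infty$ (via \lref{l:43}) gives the unstopped identity there; on each $\Omegaiastar$ the first bad restart occurs at step $i$, the identity is pushed only up to $\bar{\sigma}^{a}_{i}$ by \lref{l:42} with strong Markov, and then one uses $\tau^{a}_{r,x}\le\bar{\sigma}^{a}_{i}$ to transfer the identity to the $\tau$-stopped paths. Combining these two regimes gives \eqref{:44}; you need to add the $\Omegaiastar$ branch to your argument. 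Your treatment of \eqref{:44z} from \eqref{:44} (monotonicity and path continuity) and the pathwise relation $\tauzrx\circ\kappa=\tauor$ are fine.
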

\begin{proof}
Suppose $ \omega \in \Omegaone $. 
Then by \eqref{:4b} and \lref{l:43} we have 
$ \mathfrak{X}^{1}_t \in {S}^{k}_{r+1} \ts \mathfrak{S}$ 
for all $ 0\le t < \infty $. 
In particular, $ X _t \in {S}^{k}_{r+1} $ 
for all $ 0\le t < \infty $. 
Hence by using \lref{l:42} with $ r $ and 
$ r+1 $ combined with 
the strong Markov property repeatedly, 
we obtain for all 
$ (x,\mathfrak{s})\in \Akr \cap \Akrr $ 
\begin{align}\label{:44a}&
\PP ^{\mu ,\szir } _{\kappa (x,\mathfrak{s})} 
(\ \cdot  \ ; \Omegaz  ) 
= 
\Pxt ^{\nuk ,\soir }(\ \cdot  \ ; \Omegaone  ) \circ \kappa ^{-1} 
\quad \text{ for all } i 
.\end{align}
Hence by \lref{l:43} we have 
\begin{align}\label{:44b}&
\Pm _{\kappa (x,\mathfrak{s})} 
(\ \cdot  \ ; \Omegaz ) 
= 
\Pxtnu 
(\ \cdot  \ ; \Omegaone ) \circ \kappa ^{-1} 
\quad \text{ for all }
(x,\mathfrak{s})\in \Akr \cap \Akrr 
.\end{align}

Next suppose $ \omega \not\in \Omegaone $. 
Then there exists an $ i $ such that 
$ X_{\soir } \not\in \Srr $ 
and 
$ X_{\sojr } \in \Srr $ 
for all $ j< i $. 
Let $ \Omegaionestar  $ denote the collection of 
such $ \omega $ : 
\begin{align}\notag &
\Omegaionestar =\{ \omega \, ;\, 
X_{\soir }(\omega )\not\in \Srr , \ 
X_{\sojr }(\omega )
\in \Srr \ (\forall j < i) \}
.\end{align}
By \lref{l:42} and 
the strong Markov property we have 
\begin{align}\label{:44c}&
\PP ^{\mu ,\szir } _{\kappa (x,\mathfrak{s})}
 (\ \cdot  \ ; \Omegaizstar  ) 
= 
\Pxt ^{\nuk ,\soir }
(\ \cdot  \ ; \Omegaionestar  ) 
\circ \kappa ^{-1} 
\quad \text{ for all }
(x,\mathfrak{s})\in \Akr \cap \Akrr 
.\end{align}
By construction 
$ \tau ^{a}_{r,x} \le \sair $ ($ a=0,1 $). 
Hence \eqref{:44c} implies 
\begin{align}\label{:44d}&
\PP ^{\mu ,\tauzrx } _{\kappa (x,\mathfrak{s})}
 (\ \cdot  \ ; \Omegaizstar  ) 
= 
\Pxt ^{\nuk ,\tauor }
(\ \cdot  \ ; \Omegaionestar  ) 
\circ \kappa ^{-1} 
\quad \text{ for all }
(x,\mathfrak{s})\in \Akr \cap \Akrr 
.\end{align}

We now see that 
$ \Omega ^{a}= \Omegaa +
\sum_{i=1}^{\infty}\Omegaiastar $ ($ a=0,1 $). 
Hence \eqref{:44} follows from \eqref{:44b} and \eqref{:44d}. 
\eqref{:44z} follows from \eqref{:44} immediately. 
\end{proof}

{\em Proof of \tref{l:24}. }
Let $ \tilde{\mathfrak{S}} = 
\cap_{k\in\N }\{\liminf _{r\to\infty }\Akr \} $. 
Then by \eqref{:42d} we have \eqref{:24a}. 
Moreover, by \eqref{:24c} we deduce that 
$ \tau ^{0}_{\infty ,x}= \infty $ 
for $ \Pm _{\mathfrak{s}} $-a.s.\ 
for all 
$ \mathfrak{s}\in \tilde{\mathfrak{S}} $ such that $ \mathfrak{s}(x)= 1 $. 
Hence by \eqref{:44z} of \lref{l:44} 
we obtain \eqref{:24b} and \eqref{:24}. 
\qed 

\section{Tagged particle processes } 
\label{s:5}
In this section we prove \tref{l:26} 
and \tref{l:27}. So we take $ {S}= \Rd $ and 
$ k=1 $. We set $ \nu = \nu ^{1} $. 
Let $ \iota $ 
be the transformation on $ \RdT $ defined by 
\begin{align}\label{:51z}&
\iota (x,\mathfrak{s})= (x,\vartheta _{x}(\mathfrak{s}))
.\end{align}
Then by \thetag{M.4} we deduce that 
\begin{align}\label{:51}&
\nu \circ \iota ^{-1} = \0 
.\end{align}
We regard $ \iota $ 
as the transformation on $ {C([0,\infty); \RdT )} $, 
denoted by the same symbol $ \iota $, by 
$ \iota (\mathfrak{X}^{1})=
\{\iota(\mathfrak{X}^{1}_t)\}$.

\begin{lem}   \label{l:51} 
Assume 
\thetag{M.1.0}, \thetag{M.1.1}, 
\thetag{M.2}--\thetag{M.4}. 
Then we have the following. 
\\
\thetag{1} The bilinear form 
$(\EXY ,\dXYi )$ is closable on $ \LXY $. 
\\
\thetag{2} Let 
$(\EXY ,\dXY )$ be the closure of $(\EXY ,\dXYi )$ on $ \LXY $. 
Let 
\begin{align}\label{:51a}&
\PP ^{XY }_{\xsss }=
\PP _{\iota ^{-1}(x,\mathfrak{s})}^{\nu } 
\circ \iota ^{-1}
.\end{align}
Then 
$\PP ^{XY } = 
(\{\PP ^{XY }_{\xsss }\}_{\xsss \in\RdT },\mathfrak{X}^{1})$ 
is a diffusion associated with the Dirichlet space 
$(\EXY ,\dXY , \LXY )$. 
\\
\thetag{3} The Dirichlet space 
$(\EXY ,\dXY ,\LXY )$ is quasi-regular. 
\end{lem}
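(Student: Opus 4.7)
My plan is to construct $(\EXY ,\dXY ,\LXY )$ as the image of the quasi-regular Dirichlet space $(\Enu ,\dnu ,\Lnu )$ of \lref{l:23} (taken with $k=1$) under the unitary transformation induced by $\iota $. Note that $\iota $ is a homeomorphism of $\RdT $ with inverse $\iota ^{-1}(x,\mathfrak{s})=(x,\vartheta _{-x}\mathfrak{s})$; joint vague continuity of $(x,\mathfrak{s})\mapsto \vartheta _{x}\mathfrak{s}$ is immediate because the pairing $\int \phi \, d\vartheta _x\mathfrak{s}=\sum _i\phi (s_i-x)$ with a test function $\phi \in C_c (\Rd )$ depends continuously on $(x,\mathfrak{s})$. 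By \eqref{:51} the composition operator $U f := f\circ \iota $ is a unitary isomorphism $\LXY \to \Lnu $.

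The first step is the intertwining computation. For $h \in \dXYi $ and $\mathfrak{s}=\sum _i\delta _{s_i}$, the symmetric extension satisfies $\widetilde{Uh}(x,s_1,s_2,\ldots )=\tilde h (x,s_1-x,s_2-x,\ldots )$, so the chain rule yields
\[
\nabla _{s_i}\widetilde{Uh}(x,\mathfrak{s})= (\nabla _{t_i}\tilde h )(x,\vartheta _x\mathfrak{s}),\qquad
\nabla _x\widetilde{Uh}(x,\mathfrak{s})= -\{(D-\xx )h \}\circ \iota (x,\mathfrak{s}).
\]
Hence $\DDD [Uh (x,\cdot ),Uh (x,\cdot )](\mathfrak{s})= \DDD [h (x,\cdot ),h (x,\cdot )](\vartheta _x\mathfrak{s})$ and $\tfrac12|\nabla _x\widetilde{Uh}|^2= \tfrac12|(D-\xx )h |^2\circ \iota $. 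Integrating against $\nu $ and transporting via \eqref{:51} delivers the key identity $\Enu (Uh ,Uh )= \EXY (h ,h )$.

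Setting $\hat \E (f ,g ):=\Enu (Uf ,Ug )$ on $\LXY $ with domain $\hat \D :=U ^{-1}\dnu $ produces a closed symmetric form because $(\Enu ,\dnu )$ is closed on $\Lnu $. The intertwining shows $\dXYi \subset \hat \D $ with $\hat \E |_{\dXYi }=\EXY $, so as the restriction of a closed form, $(\EXY ,\dXYi )$ is closable and \thetag{1} follows. For \thetag{3} and the identification required in \thetag{2}, I would next show the closure of $\dXYi $ in the $\hat \E _1 $-norm equals $\hat \D $. The transferred core $U ^{-1}(\dnuik )$ consists of local smooth functions of the form $\phi (x)\psi (\vartheta _{-x}\mathfrak{s})$; these can be approximated in $\hat \E _1 $-norm by finite sums $\sum \phi _\alpha (x)\chi _\alpha (\mathfrak{s})\in \dione $ via a partition-of-unity in $x$ together with a tensor-product density argument for $\di $ applied on each piece. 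This identifies $(\EXY ,\dXY )$ with $(\hat \E ,\hat \D )$.

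Quasi-regularity \thetag{3} then transfers from $(\Enu ,\dnu ,\Lnu )$ because $\iota $ is a homeomorphism: $\E $-nests of compact sets, countable separating families, and capacity-zero exceptional sets all map across. For \thetag{2}, the process $\PP ^{XY }$ defined by \eqref{:51a} has continuous paths because $\iota $ is continuous, and its semigroup satisfies $T_t^{XY }f = U ^{-1}T_t^{\nu }U f $, which is the semigroup associated with $(\EXY ,\dXY ,\LXY )$ by the intertwining. The main obstacle I anticipate is the density step in the third paragraph: producing genuine pure-tensor approximations of the non-tensorial functions $\phi (x)\psi (\vartheta _{-x}\mathfrak{s})$ in the form norm. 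Without it, the closure of $\EXY $ is only known to be a sub-form of $\hat \E $ and one cannot identify the pushforward diffusion defined by \eqref{:51a} with the closure of $(\EXY ,\dXYi )$.
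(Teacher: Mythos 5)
Your approach is essentially the same as the paper's: transfer the Dirichlet space $(\Enu ,\dnu ,\Lnu )$ of the $1$-labeled process across the homeomorphism $\iota $ via the unitary pullback $U=\iota ^{*}$, derive closability, then obtain the diffusion and quasi-regularity from the isometry. Your intertwining computations for $\nabla _{s_i}\widetilde{Uh}$ and $\nabla _x\widetilde{Uh}$ match the paper's equalities \eqref{:51c}--\eqref{:51d}, and the conclusion for parts \thetag{2} and \thetag{3} is reached along nearly identical lines (the paper invokes Albeverio--Ma--R\"ockner to pass from \thetag{2} to \thetag{3}, whereas you transfer quasi-regularity directly; both work given the isometry).

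The density issue you flag is genuine, and you should be aware that the paper handles it by a bare assertion: in the proof of \thetag{1} the paper states that the map $\iota ^{*}\colon \dXYi \to \dnui $ is \emph{bijective}. Since $\dXYi $ and $\dnui $ are both subsets of $\dione =C_0^{\infty}(\Rd )\ot \di $ (finite sums of pure tensors), and since $(\iota ^{*}h)(x,\mathfrak{s})=h(x,\vartheta _x\mathfrak{s})$ is a local smooth function but generically \emph{not} a finite tensor product (already $\phi (x)\eta (s-x)$ for scalar $\phi ,\eta $ is not), this bijectivity is not literally true for the pre-domains and implicitly rests on exactly the approximation you worry about. Moreover, the gap enters your own argument one step earlier than you indicate: the assertion ``$\dXYi \subset \hat{\D }=U^{-1}\dnu $'' already requires $U\dXYi \subset \dnu $, i.e.\ that the composed (non-tensorial) functions lie in the closure of $\dnuik $, since the intertwining identity controls only the finiteness of $\Enu _1(Uh,Uh)$, not membership in the closure. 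So both the closability argument and the identification of the closures hinge on the same density lemma, which neither you nor the paper proves. Your proposal is therefore as complete as the paper's proof, with the virtue of naming the subtle point; if you wanted to tighten either proof, the cleanest route would be to establish that all local smooth functions on ${S}\ts \mathfrak{S}$ with finite $\E ^{\nu}_1$-norm lie in $\dnu $, after which $\iota ^{*}$ gives a unitary of the \emph{closures} and everything else is routine.
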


\begin{proof}
By \eqref{:51} we have 
\begin{align}\label{:51c}&
(f \circ \iota ,g \circ \iota )_{\Lnu }=
(f ,g )_{\LXY }
.\end{align}
We next calculate the transformation of $ \Done $ 
under the change of coordinate induced by $ \iota $. 
By a straightforward calculation we see that 
\begin{align}\label{:51d}&
\Done [f \circ \iota ,g \circ \iota ] =
(\DXY [f ,g ])\circ \iota 
\quad \text{ for $ f , g \in \CD  $. }
\end{align}
By \eqref{:51} and \eqref{:51d} 
we obtain the isometry of the bilinear forms 
$ (\Enu , \dnui ) $ and $ (\EXY , \dXYi ) $
under the transformation induced by $ \iota $. 
Indeed, 
the map $ \map{\iota ^{*}}{\dXYi }{\dnui }$ 
defined by $ \iota ^{*}(f )= f \circ \iota  $ 
is bijective and 
\begin{align}\label{:51b}&
\Enu (f \circ \iota ,g \circ \iota )
=\EXY (f ,g )
.\end{align}
By \eqref{:51c} and \eqref{:51b} 
the closability of 
 $ (\EXY , \dXYi ) $ on 
$ \LXY $ follows from that of 
$ (\Enu , \dnui ) $ on $ \Lnu $, 
which is given by \thetag{M.1.1}. 
We have thus proved \thetag{1}.

Since $ \iota $ is the transformation on $ \RdT $, it is clear that 
$  \PP ^{XY }$ 
is a diffusion with state space $ \RdT $. 
Recall that 
$ \PP ^{XY }_{\xsss }=
\PP _{\iota ^{-1}(x,\mathfrak{s})}^{\nu } 
\circ \iota ^{-1 } $ 
and that 
$ \PP ^{\nu }$ is the diffusion associated with 
$(\Enu , \dnu ,\Lnu )$. 
By \eqref{:51c} and \eqref{:51b} 
the Dirichlet spaces 
$(\Enu , \dnu ,\Lnu )$ and 
$(\EXY ,\dXY ,\LXY )$ are isometric. 
Hence we conclude 
$ \{ \PP ^{XY }_{\xsss } \}$ 
is associated with the Dirichlet space 
$(\EXY ,\dXY , \LXY )$. 

By the theorem due to Albeverio-Ma-R\"ockner 
(see \cite[Theorem 5.1]{mr}), 
the quasi-regularity of the Dirichlet space follows from 
the existence of the associated diffusion. 
Hence \thetag{3} follows from \thetag{2} immediately. 
\end{proof}

\begin{lem} \label{l:52} 
Let 
$ \mathrm{Cap}^{XY} $ be the capacity associated 
with the Dirichlet space $(\EXY ,\dXY ,\LXY )$.
Let $ \PP ^{XY } $ be the associated diffusion 
as in \lref{l:51}. 
Then there exists a subset 
$ \Xi \subset \RdT $ such that 
\begin{align}\label{:51u}&
\PP ^{XY }_{\xsss }(\mathfrak{X}\in \cdot )=
\PP ^{XY }_{\ysss } (\mathfrak{X}\in \cdot )
\quad \text{ for all }(x, \mathfrak{s}),\ (y,\mathfrak{s}) \in \Xi  ,
\\\label{:52z}&
\mathrm{Cap}^{XY}(\Xi ^{c})= 0
.\end{align}
Here we set $ \mathfrak{X}^{1}= (X,\mathfrak{X}) \in 
C([0,\infty);{S}\ts \mathfrak{S}) $ as before. 
\end{lem}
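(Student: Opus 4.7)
The strategy is to use the translation invariance \thetag{M.4} to express the environment component of $\PP^{XY}_{(x,\mathfrak{s})}$ as a measurable functional of a $\PP^\mu$-path whose initial datum is the canonical configuration $\delta_0 + \mathfrak{s}$; such a functional depends only on $\mathfrak{s}$, not on $x$.

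By \lref{l:51}, $\PP^{XY}_{(x,\mathfrak{s})} = \PP^\nu_{(x,\vartheta_{-x}\mathfrak{s})}\circ\iota^{-1}$, so if $(Y,\mathfrak{Y})$ denotes a $\PP^\nu$-path from $(x,\vartheta_{-x}\mathfrak{s})$, the environment component under $\PP^{XY}_{(x,\mathfrak{s})}$ is $\mathfrak{X}_t = \vartheta_{Y_t}(\mathfrak{Y}_t)$. For $(x,\vartheta_{-x}\mathfrak{s})\in\jmath(\tilde{\mathfrak{S}})$, \tref{l:24} identifies $(Y,\mathfrak{Y}) = \jmath(\mathfrak{Z})$, with $\mathfrak{Z}$ a $\PP^\mu$-path from $\kappa(x,\vartheta_{-x}\mathfrak{s}) = \delta_x + \vartheta_{-x}\mathfrak{s}$ and $\mathfrak{Y}_t = \mathfrak{Z}_t - \delta_{Y_t}$. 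Setting $Y'_t := Y_t - x$ and $\mathfrak{Z}'_t := \vartheta_x(\mathfrak{Z}_t)$, the composition law $\vartheta_{Y_t}\circ\vartheta_{-x} = \vartheta_{Y'_t}$ produces
\[
\mathfrak{X}_t \;=\; \vartheta_{Y_t}(\mathfrak{Z}_t - \delta_{Y_t}) \;=\; \vartheta_{Y'_t}(\mathfrak{Z}'_t - \delta_{Y'_t}),
\]
and by \thetag{M.4} the law of $\mathfrak{Z}'$ is $\PP^\mu_{\delta_0 + \mathfrak{s}}$, since $\vartheta_x(\delta_x + \vartheta_{-x}\mathfrak{s}) = \delta_0 + \mathfrak{s}$. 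Thus the law of $\mathfrak{X}$ is an $x$-independent measurable functional of $\mathfrak{s}$ alone.

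To produce $\Xi$, start from $\tilde{\mathfrak{S}}$ of \tref{l:24} and pass to a translation-covariant full-capacity subset $\tilde{\mathfrak{S}}^*\subset\tilde{\mathfrak{S}}$ by exploiting the translation invariance of $\mathrm{Cap}^\mu$ that \thetag{M.4} induces. Define
\[
\Xi := \{(x,\mathfrak{s}) \in \RdT : \delta_x + \vartheta_{-x}\mathfrak{s} \in \tilde{\mathfrak{S}}^*\}.
\]
Since $\kappa\circ\iota^{-1}(x,\mathfrak{s}) = \delta_x + \vartheta_{-x}\mathfrak{s}$, \rref{r:24}\thetag{2} combined with the isometry between $(\Enu,\dnu,\Lnu)$ and $(\EXY,\dXY,\LXY)$ established in the proof of \lref{l:51} yields $\mathrm{Cap}^{XY}(\Xi^c) = 0$. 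The identity in the second paragraph then holds for every $(x,\mathfrak{s})\in\Xi$, giving \eqref{:51u}. The main obstacle is the bookkeeping of exceptional sets: the set $\tilde{\mathfrak{S}}$ supplied by \tref{l:24} is not \emph{a priori} translation invariant, so constructing $\tilde{\mathfrak{S}}^*$ and verifying that the $\PP^\nu$-to-$\PP^\mu$ identification in the second paragraph holds simultaneously along all required translates requires careful use of the translation invariance of $\mu$ and of the Dirichlet form $(\Em,\mathcal{D}^{\mu})$ at the quasi-everywhere level.
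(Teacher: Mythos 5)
Your plan routes through \tref{l:24} to identify the $\PP^{\nu}$-path with a relabelled $\PP^{\mu}$-path and then translate. This creates a genuine gap: \tref{l:24} has the non-explosion hypothesis \eqref{:24c} among its assumptions, and \eqref{:24c} is \emph{not} assumed in \lref{l:52} (nor in \lref{l:51}, on which it rests). Without \eqref{:24c} the path identity $(Y,\mathfrak{Y})=\jmath(\mathfrak{Z})$ and the recovery $\mathfrak{Y}_t=\mathfrak{Z}_t-\delta_{Y_t}$ for all $t$ is not available, because the tagged coordinate of a $\PP^{\mu}$-path may explode in finite time even though the unlabelled process is conservative. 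So the second paragraph of your argument is not justified under the stated hypotheses.

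There is also a second, smaller soft spot you acknowledge but do not resolve: passing from $\tilde{\mathfrak{S}}$ to a ``translation-covariant full-capacity subset'' $\tilde{\mathfrak{S}}^*$ involves an uncountable intersection over translates, and it is not clear that this preserves full $\mathrm{Cap}^{\mu}$-capacity; translation invariance of $\mathrm{Cap}^{\mu}$ alone does not give this for free.

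The paper's actual proof is shorter and stays entirely at the level of the Dirichlet form $(\EXY,\dXY,\LXY)$. It first notes that translating in the $x$-coordinate, $f\mapsto f(\cdot-a,*)$, preserves both $(\,\cdot\,,\,\cdot\,)_{\LXY}$ and $\EXY$, so the associated diffusion is translation-covariant in $x$ and the identity \eqref{:51u} holds for $dx\ts\muz$-a.e.\ $(x,\mathfrak{s})$, $(y,\mathfrak{s})$. It then strengthens ``a.e.'' to ``for all $(x,\mathfrak{s})\in\Xi$ with $\mathrm{Cap}^{XY}(\Xi^c)=0$'' by observing that for cylinder sets $\mathfrak{A}$ the map $(x,\mathfrak{s})\mapsto\PP^{XY}_{(x,\mathfrak{s})}(\mathfrak{X}\in\mathfrak{A})$ is quasi-continuous, choosing a nest $\{K_n\}$ on which these restrictions are continuous, and applying the monotone class theorem. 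This avoids any reference to \tref{l:24}, to $\PP^{\nu}$-to-$\PP^{\mu}$ path identifications, and to translation covariance of the exceptional set $\tilde{\mathfrak{S}}$. You should replace your second and third paragraphs by this direct quasi-continuity argument.
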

\begin{proof}
It is clear that for each $ a\in \Rd $ 
\begin{align}&\notag 
(f (\cdot - a , *),g (\cdot - a , *))_{\LXY }
=(f ,g )_{\LXY }
\\ \notag &
\EXY (f (\cdot - a , *),g (\cdot - a , *))=
\EXY (f ,g )
.\end{align}
Hence we see that 
the equality in \eqref{:51u} holds for a.e.\ 
$ (x, \mathfrak{s}),\ (y,\mathfrak{s}) \in \RdT $. 

We next strength the equality in \eqref{:51u} from {\em a.e.\ }to {\em all } 
on $ \Xi $ for some $ \Xi $ satisfying 
$ \mathrm{Cap}^{XY}(\Xi ^{c})=0  $.

For each Borel set $ \mathfrak{A} $ of the form 
$ \mathfrak{A}=\{ \mathfrak{X}_{t_1}\in A_{1},\ldots,
\mathfrak{X}_{t_i}\in A_{i}  \}$, where 
$ A_j \in \mathcal{B}(\mathfrak{S})$ \ ($ j=1,\ldots,i $),  
we see that 
$ \PP ^{XY }_{\xsss }(\mathfrak{X}\in \mathfrak{A}) $ 
is quasi-continuous in $ (x,\mathfrak{s}) $. 
Hence there exists a subset 
$ \Xi \subset \RdT $ such that 
$ \mathrm{Cap}^{XY}(\Xi ^{c})=0  $ and that 
$ \Xi = \cup_{n=1}^{\infty}K_n $ 
for some increasing sequence of 
closed set and, moreover, 
the restriction of 
$ \PP ^{XY }_{\xsss }(\mathfrak{X}\in A) $ on 
$ K_n $ is continuous in $ (x,\mathfrak{s}) $ for all $ n $. 
This means, with a help of the monotone class theorem, 
\eqref{:51u} holds for $ \Xi $ as above. 
\end{proof}

\begin{lem} \label{l:53} 
Assume 
\thetag{M.1.0}, \thetag{M.1.1}, 
\thetag{M.2}--\thetag{M.4}. 
Then we have the following. 
\\
\thetag{1} The bilinear form $(\EY ,\dYi )$ is 
closable on $ \Lmz $. 
\\
\thetag{2} Let $ \Xi $ be as in \lref{l:52}. 
Let 
$ \{\PPY _{\mathfrak{s}}\}_{\mathfrak{s}\in\mathfrak{S}}$ 
be the family of probability measures on 
$ C([0,\infty );\mathfrak{S}) $ defined by 
\begin{align*}&
\PPY _{\mathfrak{s}}= 
\PP ^{XY }_{\xsss }(\mathfrak{X} \in \cdot ) 
&& \text{ if  $ (x,\mathfrak{s})\in \Xi $ for some $ x\in\Rd $ 
,}
\\ \notag 
& \PPY _{\mathfrak{s}}(\mathfrak{X}_t=\mathfrak{s}\text{ for all }t )=1 
&&\text{ otherwise. }
\end{align*}
Then 
$ \PPY =
(\{\PPY _{\mathfrak{s}}\}_{\mathfrak{s}\in\mathfrak{S}},\mathfrak{X})$ 
is a diffusion. 
\end{lem}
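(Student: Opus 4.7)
I will prove \thetag{1} by transferring the closability of $(\EXY,\dXYi)$ on $\LXY$, which is given by \lref{l:51}\thetag{1}, to $(\EY,\dYi)$ on $\Lmz$ via tensoring with a cut-off on $\Rd$. For \thetag{2} I will use \lref{l:52} to see that $\PPY_\mathfrak{s}$ is well defined, and then deduce its Markov property from the strong Markov property of $\PP^{XY}$.

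\textbf{Proof of \thetag{1}.} Fix $\chi\in C_0^\infty(\Rd)$ with $\|\chi\|_{L^2(\Rd)}=1$, and for each $g\in\dYi$ set $\tilde g(x,\mathfrak{s})=\chi(x)g(\mathfrak{s})\in\dXYi$. Using $(D-\xx)\tilde g=\chi\,Dg-g\,\xx\chi$ together with the cancellation $\int_{\Rd}\chi\,\partial_j\chi\,dx=\tfrac12\int_{\Rd}\partial_j(\chi^2)\,dx=0$ from integration by parts, a direct computation gives
\begin{align*}
\EXY(\tilde g,\tilde g) &= \EY(g,g)+\tfrac12\|\xx\chi\|_{L^2(\Rd)}^{2}\|g\|_{\Lmz}^{2}, \\
\|\tilde g\|_{\LXY}^{2} &= \|g\|_{\Lmz}^{2}.
\end{align*}
If $\{f_n\}\subset\dYi$ satisfies $f_n\to 0$ in $\Lmz$ and $\EY(f_n-f_m,f_n-f_m)\to 0$, then $\tilde f_n\to 0$ in $\LXY$ and $\{\tilde f_n\}$ is Cauchy for $\EXY$; closability of $(\EXY,\dXYi)$ forces $\EXY(\tilde f_n,\tilde f_n)\to 0$, and the first identity then forces $\EY(f_n,f_n)\to 0$.

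\textbf{Proof of \thetag{2}.} Well-definedness of $\PPY_\mathfrak{s}$ is immediate from \eqref{:51u}, and continuity of paths is inherited from that of $\PP^{XY}$ under the canonical projection $\Rd\ts\mathfrak{S}\to\mathfrak{S}$. For the Markov property, the strong Markov property of $\PP^{XY}$ yields, for any $(x,\mathfrak{s})\in\Xi$ and any bounded Borel functional $F$ on $C([0,\infty);\mathfrak{S})$,
\begin{align*}
\EE^{\PPY_\mathfrak{s}}[F\circ\theta_t\mid\mathcal{F}^\mathfrak{X}_t]
=\EE^{\PP^{XY}_{(x,\mathfrak{s})}}\!\bigl[\PP^{XY}_{(X_t,\mathfrak{X}_t)}(F(\mathfrak{X}))\,\big|\,\mathcal{F}^\mathfrak{X}_t\bigr],
\end{align*}
and by \eqref{:51u} the inner expression coincides with $\PPY_{\mathfrak{X}_t}(F)$ whenever $(X_t,\mathfrak{X}_t)\in\Xi$.

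\textbf{Main obstacle.} The principal technical point is therefore to ensure that $(X_t,\mathfrak{X}_t)\in\Xi$ simultaneously for all $t\ge 0$, $\PP^{XY}_{(x,\mathfrak{s})}$-almost surely for $(x,\mathfrak{s})$ in a subset of $\Xi$ of the same full capacity. Since $\mathrm{Cap}^{XY}(\Xi^c)=0$ and $\Xi=\bigcup_n K_n$ with $K_n$ closed by the construction in \lref{l:52}, this follows from the standard Dirichlet-form fact that a set of zero capacity is avoided for all time by the associated diffusion, after possibly shrinking $\Xi$ to a quasi-closed invariant subset of the same capacity. Combined with the trivial specification of $\PPY_\mathfrak{s}$ outside the projection of $\Xi$, this yields the Markov property and hence the diffusion structure of $\PPY$.
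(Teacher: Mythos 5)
Your proposal is correct and follows essentially the same approach as the paper. In part \thetag{1} you use the same tensoring device $g\mapsto\chi\ot g$ (the paper's $\varphi\ot f$), the same two identities \eqref{:52c}--\eqref{:52e}, and the same cancellation $\int_{\Rd}\chi\xx\chi\,dx=0$; in part \thetag{2} you rely, as the paper does, on \lref{l:52} and on $\Xi^{c}$ being $\PP^{XY}$-exceptional to get a properly invariant version of $\Xi$, from which the Markov property of $\PPY$ follows.
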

\begin{proof}
Let $ \varphi \in \Cz $ and $ f \in \dYi $ . Then 
\begin{align}\label{:52c}&
\|\varphi \ot f \|_{\LXY }=
\|\varphi \|_{L^2(dx)} \|f \|_{\Lmz }
,\\ \label{:52e}&
\EXY 
(\varphi \ot f ,\varphi \ot f )=
\|\varphi \|^2_{L^2(dx)}  \EY (f ,f ) 
+ 
\frac{1}{2}\|\nabla \varphi \|^2_{L^2(dx)} 
 \|f \|^2_{\Lmz }
.\end{align}
Indeed, \eqref{:52c} is a straightforward calculation. 
As for \eqref{:52e} we see 
\begin{align}\label{:52d}
\DXY [\varphi \ot f ,\varphi \ot f ] =&
\varphi ^2 \ot \DY [f ,f ]+
\frac
{|\nabla \varphi |^2 \ot f ^2 }{2}  
-
(\varphi \nabla \varphi , f Df )_{\Rd } 
.\end{align}
Then integrating over $ \RdT $ 
by $ \0 $ and noticing 
$$ 
\int_{\RdT } 
(\varphi \nabla \varphi , f Df )_{\Rd } 
\0 = 
(\int_{\Rd }\varphi \nabla \varphi dx , 
\int _{\Theta }f Df d\muz )_{\Rd } = 0 
,$$
we obtain \eqref{:52e}.

By \eqref{:52c} and \eqref{:52e} the closability of 
$(\EY ,\dYi )$ on $ \Lmz $ follows from the one 
of $(\EXY ,\dXYi )$ on $ \LXY $, which has been already obtained in \lref{l:51} \thetag{1}. 
We thus prove \thetag{1}. 

We next prove \thetag{2}. 
By \eqref{:51u} we see that 
for any $ A \in \mathcal{B}(C([0,\infty); \mathfrak{S}) )$ 
\begin{align}\label{:52a} &
\PPY _{\mathfrak{s}} (\mathfrak{X}\in A  ) = 
\PP ^{XY }_{\xsss }((X ,\mathfrak{X}) \in \Czi \ts A )
\quad \text{for all $ (x,\mathfrak{s}) \in \Xi $}
.\end{align}
We remark  
$ \PP ^{XY }$ is a diffusion on $ \RdT $ 
and $ \Xi ^c $ is an exceptional set, that is, 
$ \PP ^{XY }_{\xsss  }(\sigma _{\Xi ^c}<\infty )=0 $ 
for q.e.\ $ (x,\mathfrak{s}) $ because of 
$ \mathrm{Cap}^{XY}(\Xi ^c)=0  $. 
Hence we deduce from \eqref{:52a} that 
$ \PPY $ is a diffusion with state space $ \mathfrak{S}$. 
\end{proof}

\begin{lem} \label{l:54} 
Let $(\EY ,\dY )$ be the closure of $(\EY ,\dYi )$ on $ \Lmz $. \\
\thetag{1} The diffusion 
$ \PPY $ in \lref{l:53} 
is associated with $ (\EY , \dY ) $ on $ \Lmz $.
\\\thetag{2} The Dirichlet form 
$(\EY ,\dY )$ on $ \Lmz  $ is quasi-regular. 
\end{lem}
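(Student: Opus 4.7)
The plan is to identify the transition semigroup $P_t^Y$ of the diffusion $\PPY$ constructed in \lref{l:53}\thetag{2} with the $\Lmz$-semigroup generated by $\Ya$; part \thetag{2} then follows immediately from the Albeverio--Ma--R\"ockner theorem (\cite[Theorem~5.1]{mr}), exactly as in the proof of \lref{l:51}\thetag{3}.

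For part \thetag{1}, I first check that $P_t^Y$ is a strongly continuous, contractive, Markovian, and $\muz$-symmetric semigroup on $\Lmz$. The Markov property and contractivity are inherited from $\PPY$ being a diffusion. The $\muz$-symmetry follows from the $(dx\ts \muz)$-symmetry of $T_t^{XY}$ together with the fact that $P_t^Y f(\mathfrak{s})=\ExtXY[f(\mathfrak{X}_t)]$ is independent of $x$ for $(x,\mathfrak{s})\in\Xi$ (\lref{l:52}): integrating this equality against $\varphi(x)^2$ for a $\varphi\in\Cz$ with $\|\varphi\|_{L^2(dx)}=1$ allows one to transfer the symmetry of $T_t^{XY}$ to $P_t^Y$.

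To identify the Dirichlet form of $P_t^Y$, the essential tool is the identity derived in the proof of \lref{l:53},
\[
\EXY(\varphi\ot f,\varphi\ot g)=\|\varphi\|_{L^2(dx)}^2\,\EY(f,g)+\tfrac{1}{2}\|\nabla\varphi\|_{L^2(dx)}^2\,(f,g)_{\Lmz},
\]
whose polarized cross terms vanish on the diagonal $\varphi_1=\varphi_2=\varphi$ by $\int\varphi\nabla\varphi\,dx=0$. Take a family $\varphi_n\in \Cz$ of $L^2(dx)$-normalised bump functions with $\|\nabla\varphi_n\|_{L^2(dx)}\to 0$, e.g.\ $\varphi_n(x)=n^{-d/2}\varphi_0(x/n)$. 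Using $|\varphi_n(X_t)-\varphi_n(x)|\le C n^{-d/2-1}|X_t-x|$ together with moment bounds on the tagged-particle displacement $|X_t-x|$ under $\PP^{XY}_{(x,\mathfrak{s})}$, I expect
\[
(T_t^{XY}(\varphi_n\ot f),\varphi_n\ot g)_{\LXY}\longrightarrow (P_t^Y f,g)_{\Lmz}\qquad (n\to\infty)
\]
for each fixed $t>0$. Combined with $\tfrac{1}{t}((I-T_t^{XY})(\varphi_n\ot f),\varphi_n\ot g)_{\LXY}\to\EXY(\varphi_n\ot f,\varphi_n\ot g)$ as $t\downarrow 0$, an interchange of limits justified by uniform-in-$n$ bounds yields
\[
\lim_{t\downarrow0}\tfrac{1}{t}((I-P_t^Y)f,g)_{\Lmz}=\EY(f,g)\qquad (f,g\in\dYi).
\]
Since $\dYi$ is a core for $\Ya$ by definition, the one-to-one correspondence between $\muz$-symmetric semigroups and Dirichlet forms identifies $P_t^Y$ with the $\Lmz$-semigroup of $\Ya$, completing part \thetag{1}.

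The main obstacle is the interchange of limits $n\to\infty$ and $t\downarrow 0$ in the display above: because $\PP^{XY}$ does not factor as an independent product of a Brownian motion on $\R^d$ and $\PPY$, the identification $T_t^{XY}(\varphi_n\ot f)\approx \varphi_n\cdot P_t^Y f$ holds only in the spread-out limit and must be made rigorous via uniform control of the displacement $X_t-x$ under $\PP^{XY}_{(x,\mathfrak{s})}$. The cleanest route to avoid the limit interchange is probably to work with the resolvents $G_\alpha^{XY}$ and $G_\alpha^Y$: Laplace transformation in $t$ replaces the $t\downarrow0$ limit by dominated convergence, and the variational identity $\alpha(G_\alpha^{XY}h,h')_{\LXY}+\EXY(G_\alpha^{XY}h,h')=(h,h')_{\LXY}$ applied on pure tensors $h=\varphi_n\ot f$, $h'=\varphi_n\ot g$ then identifies the resolvent of $P_t^Y$ on $\Lmz$ with that of $\Ya$. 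Once part \thetag{1} is in hand, part \thetag{2} is immediate from \cite[Theorem~5.1]{mr}.
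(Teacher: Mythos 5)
Your route is genuinely different from the paper's, and as presented it has two real gaps.

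First, the approximation $(T_t^{XY}(\varphi_n\ot f),\varphi_n\ot g)_{\LXY}\to (P_t^{Y} f,g)_{\Lmz}$ on which everything rests requires a quantitative control of the tagged-particle displacement $|X_t-x|$ under $\PP^{XY}_{(x,\mathfrak{s})}$ that is uniform over $(x,\mathfrak{s})$; the tagged coordinate has a drift depending on the entire configuration, and none of the hypotheses \thetag{M.1}--\thetag{M.4} supply such a moment bound. Passing to resolvents does not remove this obstruction: $G_\alpha^{XY}(\varphi_n\ot f)(x,\mathfrak{s})=\EE^{XY}_{(x,\mathfrak{s})}\bigl[\int_0^\infty e^{-\alpha t}\varphi_n(X_t)f(\mathfrak{X}_t)\,dt\bigr]$ still mixes $X_t$ with $\mathfrak{X}_t$, and factoring the expectation to read off the $Y$-resolvent is exactly the same uniform-displacement issue in a Laplace-transformed guise.

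Second, even granting the form identity $\lim_{t\downarrow0}\tfrac{1}{t}((I-P_t^Y)f,g)_{\Lmz}=\EY(f,g)$ for $f,g\in\dYi$, the conclusion does not follow as stated. That $\dYi$ is a core for $(\EY,\dY)$ says nothing about whether $\dYi$ is a core for the closed Dirichlet form generated by $\{P_t^Y\}$; its domain $\tilde{\mathcal{D}}^Y$ could a priori be strictly larger than $\dY$, and agreement of two closed forms on a set that is a core for only one of them does not identify them. The paper's proof addresses precisely this by establishing the domain equality $\tilde{\mathcal{D}}^Y=\dY$ before matching resolvents, and that equality is obtained via the weighted form $\EXY_{\rho,\lambda}(f,g)=\EXY(f,\rho^2 g)+\lambda(f,g)_{L^2(\rho)}$ on $L^2(\rho)=L^2(\RdT,\rho^2\,\0)$ with $\int\rho^2dx=1$, together with two facts quoted from \cite{o.inv2}: that $\EXY_{\rho,\lambda}$ is positive and closable for $\lambda>\lambda_0$, and that $T_t^{XY}(1\ot f)=e^{\lambda t}T_t^{\lambda}(1\ot f)$. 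The weighted $L^2(\rho)$ device is what lets the paper apply the $XY$-semigroup and its form directly to the non-$\LXY$ element $1\ot f$ (which is in $L^2(\rho)$), and hence compute the domain and the form of $T_t^Y$ simultaneously via the identity $\EXY_{\rho,\lambda}(1\ot f,1\ot f)=\EY(f,f)+\lambda(f,f)_{\Lmz}$. Your bump-function approximation is trying to reproduce the same thing inside $\LXY$ and the missing moment control, plus the missing domain identification, are the price. If you want to salvage the approach, you would need both a uniform displacement estimate for the tagged particle and a separate argument that $\dYi$ is dense in $\tilde{\mathcal{D}}^Y$ with respect to the form norm of $\{P_t^Y\}$.
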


\begin{proof}
Let $\EE ^{Y} _{\mathfrak{s}}$ denote the expectation with respect to $\PPY _{\mathfrak{s}}$. 
Let $ \{T^{Y}_{t}\} $ be the semigroup defined by 
$ T^{Y}_{t}f = 
\EE ^{Y} _{\mathfrak{s}}[f (\mathfrak{X}_t)] $. 
Let $ \{T^{XY}_{t}\} $ be the semigroup associated 
with the Dirichlet space 
$(\EXY ,\dXY ,\LXY )$. Then we deduce that 
\begin{align}\label{:52h}&
1\ot (T^{Y}_{t} f ) 
= T^{XY}_{t} (1\ot f )
.\end{align}

Let 
$ \rho (x)= \cref{;c51}(1+|x|^{2(d+4)})^{-1/2} $ 
such that 
$ \int \rho ^2dx = 1 $, where 
$ \Ct{;c51} $ is 
the normalizing constant. 
Let $ L^2(\rho )= L^2(\RdT , \rho ^2 \0 ) $ and 
\begin{align}\label{:54a}&
\EXY _{\rho , \lambda }(f ,g )=
\EXY (f , \rho ^2 g ) + 
\lambda (f ,g )_{L^2(\rho )}
.\end{align}
Then there exists $ \lambda _0 $ such that 
$ (\EXY _{\rho , \lambda } ,\dione )$ 
is positive and closable on 
$ L^2(\rho ) $ for all $\lambda > \lambda _0 $ 
(see \cite[Lemma 2.1]{o.inv2} for proof). 
We fix such a $ \lambda $ and denote by 
$ \{T^{\lambda }_t\} $ the semigroup associated 
with the closure 
$ (\EXY _{\rho , \lambda } ,\mathcal{D}_{\rho }^{XY})$ 
of 
$ (\EXY _{\rho , \lambda } ,\dione )$ 
on $ L^2(\rho ) $. 
It is known that (see \cite[234 p]{o.inv2})
\begin{align}\label{:54b}&
T^{XY}_{t} (1\ot f ) = 
e^{\lambda t} T^{\lambda }_t (1\ot f )  
.\end{align}
By a direct calculation we see that 
\begin{align}\label{:54c}&
\EXY _{\rho , \lambda } (1\ot f ,1\ot f )= 
 \EY (f ,f ) + \lambda (f ,f )_{\Lmz }
.\end{align}

Let $ \tilde{\mathcal{D}}^{Y} $ be 
the domain of the Dirichlet space associated with 
$ \{T^{Y}_{t}\} $ on $ \Lmz $. 
By \eqref{:52h} and \eqref{:54b} we obtain 
$ f \in \tilde{\mathcal{D}}^{Y} $  
if and only if 
$ 1\ot f \in \mathcal{D}_{\rho }^{XY} $. 
By \eqref{:54c} we see that 
 $ 1\ot f \in \mathcal{D}_{\rho }^{XY} $ 
if and only if $ f \in \dY $. 
Collecting these we obtain that 
$ \tilde{\mathcal{D}}^{Y}=\dY $. 

Let $ \psi \in \Cz $ be such that 
$ \int \psi \rho ^2 dx \not=0 $. 
Then we have for any $ f ,g \in \dY $ 
\begin{align}\label{:52k}&
\limi{\alpha  }\alpha  ^2 
( \int_0^{\infty }e^{-\alpha  t}
\{ 
1\ot f - T^{XY}_{t}(1\ot f ) \}dt ,
\ \psi \ot g \ )_{L^2(\rho )}
\\ \notag & 
= \int_{\Rd }\psi \rho ^2 dx \cdot \EY (f , g ) 
.\end{align}
By using \eqref{:52h} and \eqref{:52k} and then 
by dividing the both sides 
by $ \int_{\Rd }\psi \rho ^2 dx $, we obtain 
\begin{align}\label{:52j}&
\limi{\alpha  }\alpha  ^2 
( \int_0^{\infty }e^{-\alpha  t} 
\{ f - T^{Y}_{t}f \}\ dt , 
\ g  )_{\Lmz } =  \EY (f , g ) 
.\end{align}
This implies $ \{T^{Y}_{t}\} $ is the semigroup associated with  
the Dirichlet form $ (\EY ,\dY ) $ on $ \Lmz $ 
(see Lemma 1.3.4 in \cite{fot}). 
So we conclude $ \PPY $ is associated with 
$ (\EY , \dY ) $ on $ \Lmz $. 

\thetag{2} is immediate from \thetag{1} 
similarly as \lref{l:51}. 
\end{proof}

\noindent 
{\em Proof of \tref{l:26}. }
\thetag{1} follows from \lref{l:53} and 
\lref{l:54}. 
\thetag{2} follows from \lref{l:51}.
\qed

\noindent 
{\em Proof of \tref{l:27}. }
\eqref{:27a} follows from \tref{l:24} and \lref{l:51} \thetag{2}. 
\eqref{:27b} follows from \tref{l:24}, \lref{l:51} and 
\lref{l:53} immediately. 
\qed

\section{Non-explosion of tagged particles.}
\label{s:6}

Throughout this section we set 
$ \nur = \nur ^{1} $ and $ \mur = \mur ^{1} $. 
In this section we prove \tref{l:25}. 
By \eqref{:44z} in \lref{l:44} 
the non-explosion property of 
tagged particles follows from 
the conservativeness of the diffusion $ \PP ^{\nu }$. 
Then we apply a result in \cite{fot} to prove this as follows.

\begin{lem} \label{l:61} 
Assume 
\thetag{M.1.0}, \thetag{M.1.1}, 
\thetag{M.2}, and \thetag{M.3}. 
Assume \eqref{:25a}. 
Then $ \PP ^{\nu }$ is conservative. 
\end{lem}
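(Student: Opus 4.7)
The plan is to apply a standard conservativeness test from the theory of symmetric Markov processes (see \cite[Section 1.6]{fot}): since $(\Enu,\dnu,\Lnu)$ is quasi-regular by \lref{l:23}, the diffusion $\PP^{\nu}$ is conservative precisely when its lifetime $\zeta^{\nu}$ satisfies $\PP^{\nu}_{(x,\mathfrak{s})}(\zeta^{\nu}>T)=1$ for $\nu$-q.e.\ $(x,\mathfrak{s})$ and some $T>0$. The environment coordinate $\mathfrak{X}_{t}$ lives in the Polish space $\mathfrak{S}$ with vague topology and carries the invariant probability $\mu$, so it cannot explode; hence the question reduces to controlling the exit of the tagged coordinate $X_{t}$ from the balls $S _{r}$.

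To obtain such exit estimates, I would combine the identification of parts in \lref{l:42} (on the good set $\Akr$, $\Pxt^{\nu,\sor}$ and $\Pm^{\szr}_{\kappa(x,\mathfrak{s})}$ coincide under the labelling map $\kappa$) with the Palm identity $\nu=\mu_{x}\rho^{1}(x)\,dx$. Integrating the tagged-particle exit probability against $\nu$ turns it into the expectation, under the conservative process $\PP^{\mu}$, of the number of particles originally in $S_{r+R}$ that travel a distance of order $r$ in time $T$. A union bound over those particles, together with a Gaussian-type single-particle displacement estimate, should yield a bound of the shape
\begin{align*}
\int_{S _{r}\times\mathfrak{S}} \PP^{\nu}_{(x,\mathfrak{s})}\!\big(\sup_{t\le T}|X_{t}|\ge r\big)\,d\nu
\le c\,\Big(\int_{S_{r+R}}\rho^{1}(x)\,dx\Big)\,\ell\!\Big(\tfrac{r}{\sqrt{(r+R)T}}\Big).
\end{align*}
Hypothesis \eqref{:25a} then forces the right-hand side to $0$ along a subsequence $r_{k}\to\infty$, which gives non-explosion for $\nu$-a.e.\ starting datum, and quasi-continuity of the exit probability upgrades this to $\nu$-q.e.

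The main obstacle is producing the single-particle Gaussian tail bound with the correct configuration-dependent variance $\sim (r+R)T$. A natural route is a Lyons--Zheng-type forward/backward martingale decomposition of the coordinate function $x\mapsto x$ with respect to the $\nu$-symmetric form: since the $x$-part of $\DDD^{1}$ is the standard gradient form with unit coefficient, each coordinate of the martingale additive functional has quadratic variation $t$, i.e.\ is a time-changed Brownian motion, and the effective variance inflation by the factor $r+R$ reflects the redistribution of the zero-energy drift over the $\sim r+R$ particles inside $S_{r+R}$ via the Palm identity. Making this martingale estimate uniform in the a priori unbounded configuration is the technically delicate step, whereas the reduction via the Fukushima conservativeness test and the transfer through Lemmas~\ref{l:41}--\ref{l:42} are conceptually routine.
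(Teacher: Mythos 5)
Your proposal is on the right conceptual track but is essentially re-deriving, from scratch, a theorem that the paper simply cites. The paper's entire proof is one sentence: it invokes Theorem~5.7.2 of Fukushima--Oshima--Takeda, which is Takeda's conservativeness criterion for (quasi-)regular local Dirichlet forms. That theorem states, roughly, that if one has a function $u$ on the state space whose energy measure is dominated by the reference measure, then a volume-growth condition of exactly the form \eqref{:25a} (with the Gaussian tail $\ell$ appearing because the proof is a Lyons--Zheng forward/backward martingale argument) forces conservativeness. Here $u(x,\mathfrak{s})=|x|$, the $x$-part of $\DDD^{1}$ is the raw gradient with unit coefficient so the energy-measure domination is automatic, and $\nu(S_{r+R}\ts\mathfrak{S})=\int_{S_{r+R}}\rho^{1}\,dx$ supplies the volume term. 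So the criterion applies directly to $\PP^{\nu}$ with no detour.

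This is where your sketch diverges unnecessarily: you route the exit-probability estimate through $\PP^{\mu}$ via \lref{l:42} and the Palm identity, and then try to argue by a union bound over particles in $S_{r+R}$. That detour is not needed (the criterion is stated intrinsically for the $\nu$-symmetric diffusion), and the union-bound step is also not really correct as stated, since the left side of your displayed inequality is already the $\nu$-integral, which is the sum over particles; there is no further union bound to take. More importantly, the step you flag as "technically delicate" --- making the forward/backward martingale estimate uniform in the configuration so as to extract the Gaussian tail with variance $\sim(r+R)T$ --- is precisely the content of the FOT theorem, and your proposal does not actually resolve it. So as a self-contained argument it has a genuine gap at the heart; as an outline of why FOT~5.7.2 is the right tool, it is accurate. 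The cleaner move is the paper's: verify the hypotheses of FOT~5.7.2 for $\9$ (quasi-regularity from \lref{l:23}, locality from the gradient form, the energy-measure bound for $u(x,\mathfrak{s})=|x|$) and read off conservativeness from \eqref{:25a}.
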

\begin{proof}
Applying Theorem 5.7.2 in \cite{fot} to the diffusion $ \PP ^{\nu }$ yields \lref{l:61}. 
\end{proof}
We next prepare several notations 
used in the rest of this section. 

Let 
$ \mathfrak{X} \in 
C([0,\infty);\mathfrak{S}_{\mathrm{single}}) $. 
We write 
$ \mathfrak{X}=\{ \sum_i \delta_{X_t^i} \} $ and 
set $ X^i \in C(I_i ; \R^d)$. 
We take $ I_i $ to be the maximal interval. 
By construction we deduce that $ I_i $ is 
of the form $[0,b_i)$ or $(a_i,b_i)$. 
Let $ \mathbf{I} = \{ i; I_i =[0,b_i) \} $ 
and $ \mathbf{J} = \{ i; I_i =(a_i,b_i) \} $. 
Then 
$ \mathfrak{X}= 
\sum_{i\in \mathbf{I}} \delta_{X_t^i} +
\sum_{i\in \mathbf{J}} \delta_{X_t^i} =: 
\mathfrak{X}^{\mathbf{I}} + 
\mathfrak{X}^{\mathbf{J}}$. 

We relabel $ \mathfrak{X}^{\mathbf{I}} $  as 
$ \mathfrak{X}^{\mathbf{I}} =
\{\sum_x \delta_{X^{x}_t} \}$, 
where $ x \in {S}$ is such that $ X^{x}_0=x $. 
Let 
\begin{align}\label{:62c}&
\xi ^{x} (\mathfrak{X})= \inf \{ t > 0 \, ;\, 
\sup_{0\le s < t} |X^{x}_s|= \infty \} 
,\\\label{:62a}&
\xi _{r} (\mathfrak{X})= \inf \{ t > 0 \, ;\, 
\min_{|x|<r} \xi ^{x} (\mathfrak{X})<t \} 
\quad (r \in \N\cup \{ \infty \})
,\\\label{:62d}&
\mathfrak{A}_r = 
\{ \mathfrak{s}\in \mathfrak{S}\, ;\, 
\PP ^{\mu }_{\mathfrak{s}}(\xi _{r}<\infty )>0 \}
.\end{align}
For a path 
$ \mathfrak{X}^1 = (X,\mathfrak{X})  
$ 
we define the stopping time $ \eta $ by 
\begin{align}\label{:62b}&
\eta (\mathfrak{X}^1)= \inf \{ t > 0 \, ;\, 
\sup_{0\le s < t} |X_s| = \infty \} 
.\end{align}
\begin{lem} \label{l:62} 
Suppose 
$ \int _{{S}\ts \mathfrak{S}} 
\PP ^{\nu }_{(x,\mathfrak{s})}(\eta < \infty ) d\nur = 0 $. 
Then $ \mu (\mathfrak{A}_r) =0 $. 
\end{lem}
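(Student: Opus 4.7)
The plan is to prove the contrapositive: assuming $\mu(\mathfrak{A}_r)>0$, I will exhibit a strictly positive lower bound for $\int_{S\times\mathfrak{S}}\PP^{\nu}_{(x,\mathfrak{s})}(\eta<\infty)\,d\nur$. The three ingredients I need are the Campbell-type formula that converts integration against $\nu_r$ into a sum over particles of an $\mu$-integral, the identification of the 1-labeled diffusion with the unlabeled one given by \tref{l:24}, and the elementary decomposition of $\{\xi_r<\infty\}$ as a union over initial positions.

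First I would unpack the explosion event. By \eqref{:62a},
\[
\{\xi_r<\infty\}=\bigcup_{x\in\mathfrak{s},\,|x|<r}\{\xi^x<\infty\},
\]
so a union bound gives $\PP^{\mu}_{\mathfrak{s}}(\xi_r<\infty)\le\sum_{x\in\mathfrak{s},\,|x|<r}\PP^{\mu}_{\mathfrak{s}}(\xi^x<\infty)$. In particular for $\mathfrak{s}\in\mathfrak{A}_r$ the right-hand side is strictly positive. Next, from the definition \eqref{:23a}--\eqref{:23b} of the Palm measure $\mu_x$ and of $\nu$, together with the cut-off $1_{S_r}$ in the definition of $\nu_r$, one has the Campbell identity
\[
\int_{S\times\mathfrak{S}} f(x,\mathfrak{s}')\,d\nur(x,\mathfrak{s}')
=\int_{\mathfrak{S}}\sum_{x\in\mathfrak{s},\,|x|<r}f(x,\mathfrak{s}-\delta_x)\,d\mu(\mathfrak{s})
\]
for any non-negative measurable $f$ on $S\times\mathfrak{S}$. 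I would apply this with $f(x,\mathfrak{s}')=\PP^{\nu}_{(x,\mathfrak{s}')}(\eta<\infty)$.

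The third step is to identify the integrand on each side. Applying \tref{l:24} with $k=1$, together with \rref{r:24}(1) (which asserts that the identity $\Pm_\mathfrak{s}=\PP^{\nu}_{\mathfrak{s}^{1}}\circ\kappa^{-1}$ is valid for \emph{every} $\mathfrak{s}^{1}\in\kappa^{-1}(\mathfrak{s})$, i.e.\ for every choice of which point of $\mathfrak{s}$ is tagged), I obtain for each $\mathfrak{s}\in\tilde{\mathfrak{S}}$ and each $x\in\mathfrak{s}$ the identity
\[
\PP^{\mu}_{\mathfrak{s}}(\xi^x<\infty)=\PP^{\nu}_{(x,\mathfrak{s}-\delta_x)}(\eta<\infty).
\]
Since $\mathrm{Cap}^{\mu}(\tilde{\mathfrak{S}}^c)=0$ implies $\mu(\tilde{\mathfrak{S}}^c)=0$, this identification is valid for $\mu$-a.e. $\mathfrak{s}$. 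Combining the three ingredients,
\begin{align*}
\int_{S\times\mathfrak{S}}\PP^{\nu}_{(x,\mathfrak{s}')}(\eta<\infty)\,d\nur
&=\int_{\mathfrak{S}}\sum_{x\in\mathfrak{s},\,|x|<r}\PP^{\nu}_{(x,\mathfrak{s}-\delta_x)}(\eta<\infty)\,d\mu(\mathfrak{s})\\
&=\int_{\mathfrak{S}}\sum_{x\in\mathfrak{s},\,|x|<r}\PP^{\mu}_{\mathfrak{s}}(\xi^x<\infty)\,d\mu(\mathfrak{s})\\
&\ge\int_{\mathfrak{A}_r}\PP^{\mu}_{\mathfrak{s}}(\xi_r<\infty)\,d\mu(\mathfrak{s})>0,
\end{align*}
contradicting the hypothesis. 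Hence $\mu(\mathfrak{A}_r)=0$.

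The main obstacle I anticipate is bookkeeping around the label map and the exceptional sets: \tref{l:24} is stated quasi-everywhere on $\tilde{\mathfrak{S}}$, but here I integrate against $\mu$, so I need to invoke that capacity-zero implies $\mu$-null and also use \rref{r:24}(1) to pick out an arbitrary point $x\in\mathfrak{s}$ as the tagged particle rather than being bound to a single fixed labeling. Once these are in hand the remaining steps are the routine Campbell/Palm manipulation and the union-bound decomposition of $\{\xi_r<\infty\}$.
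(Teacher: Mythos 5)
Your overall structure is right, and the Campbell/Palm identity you write down is a clean packaging of what the paper does sector by sector (decompose $\mathfrak{S}=\sum_m \mathfrak{S}_{r,m}$, use \eqref{:30a} to compare $\mu$ with $\mu_r$ on each $\mathfrak{S}_{r,m}$, then convert to a $\nu_r$-integral via $\mu_r=\nu_r\circ\kappa^{-1}$ and the symmetrization identity \eqref{:31e}). The union-bound decomposition $\{\xi_r<\infty\}=\bigcup_{x\in\mathfrak{s},|x|<r}\{\xi^x<\infty\}$ is also the paper's first step, just phrased in the contrapositive.

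There is, however, a genuine gap in the step where you identify $\PP^{\mu}_{\mathfrak{s}}(\xi^x<\infty)$ with $\PP^{\nu}_{(x,\mathfrak{s}-\delta_x)}(\eta<\infty)$. You invoke \tref{l:24} and \rref{r:24}(1), but \tref{l:24} has \eqref{:24c} among its hypotheses, and \eqref{:24c} is precisely the non-explosion property that \tref{l:25} is trying to establish; \lref{l:62} is a lemma inside the proof of \tref{l:25}. So appealing to \tref{l:24} here is circular. Worse, if \eqref{:24c} did hold then $\PP^{\mu}_{\mathfrak{s}}(\xi^x<\infty)=0$ for q.e.\ $\mathfrak{s}$, and the whole lemma would be vacuous — the content of \lref{l:62} lives exactly in the regime where \tref{l:24} is not yet available. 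The correct ingredient is not the full identity $\Pm_{\mathfrak{s}}=\PP^{\nu}_{\jmath(\mathfrak{s})}\circ\kappa^{-1}$ but the stopped-process identity \eqref{:44z} from \lref{l:44}, namely
\begin{align*}
\PP^{\mu ,\tau ^{0}_{\infty ,x}}_{\kappa (x,\mathfrak{s})}
= \Pxt ^{\nu ,\tau ^{1}_{\infty }}\circ \kappa ^{-1}
\qquad \text{for }(x,\mathfrak{s})\in \liminf_{r\to\infty}\Akr ,
\end{align*}
which is proved before, and independently of, the non-explosion hypothesis. Since $\xi^x$ coincides with the accumulation time $\tau^{0}_{\infty,x}$ of the hitting times $\tau^0_{r,x}$ (and likewise $\eta=\tau^1_{\infty}$), the event $\{\xi^x<\infty\}$ is measurable with respect to the stopped path, so \eqref{:44z} already gives $\PP^{\mu}_{\kappa(x,\mathfrak{s})}(\xi^x<\infty)=\PP^{\nu}_{(x,\mathfrak{s})}(\eta<\infty)$ without any conservativeness input. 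If you replace the citation of \tref{l:24}/\rref{r:24}(1) by \eqref{:44z}, the rest of your argument goes through and is essentially the paper's, modulo the contrapositive phrasing.
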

\begin{proof} 
Let $ \mur $ be as in \eqref{:!1}. 
For $ m\ge 1 $ let $ \Ct{;64a} $ 
be a constant such that 
$$ 
\mu (\cdot \cap \mathfrak{S}_{r,m} )\le 
\cref{;64a}
\mur (\cdot \cap \mathfrak{S}_{r,m}) 
.$$ 
Then we see that 
\begin{align}\label{:62e}
\int _{\mathfrak{S}_{r,m}} \PP ^{\mu }_{\mathfrak{s}}(\xi _{r}<\infty ) \mu (d\mathfrak{s}) 
 & \le 
\int _{\mathfrak{S}_{r,m}} \sum_{\mathfrak{s}(x)\ge 1,\ |x| < r}
\PP ^{\mu }_{\mathfrak{s}}(\xi ^{x}<\infty ) \mu (d\mathfrak{s}) 
\\ \notag & \le \cref{;64a}
\int _{\mathfrak{S}_{r,m}} \sum_{\mathfrak{s}(x)\ge 1,\ |x| < r}
\PP ^{\mu }_{\mathfrak{s}}(\xi ^{x}<\infty ) \mur (d\mathfrak{s}) 
\\ \notag & = \cref{;64a}
\int _{{S}\ts \mathfrak{S}_{r}^{m-1}} 
\PP ^{\mu }_{\kappa (x,\mathfrak{s})} (\xi ^{x}<\infty ) 
\nur (dxd\mathfrak{s})
\\ \notag & = \cref{;64a}
\int _{{S}\ts \mathfrak{S}_{r}^{m-1}} \PP ^{\nu }_{(x,\mathfrak{s})}(\eta <\infty ) 
\nur (dxd\mathfrak{s})
\quad \text{by \eqref{:44z}} 
.\end{align}
Hence we have 
$\int _{\mathfrak{S}_{r,m}}\PP ^{\mu }_{\mathfrak{s}}(\xi _{r}<\infty )\mu (d\mathfrak{s})=0 $ 
for all $ m\ge 1 $ by assumption. 
This equality also holds for $ m=0 $ 
because $ \PP ^{\mu }_{\mathfrak{s}}(\xi _{r}<\infty )=0 $ 
for $\mathfrak{s}\in \mathfrak{S}_{r}^{0}$. 
Hence by 
$ \mathfrak{S}= \sum_{m=0}^{\infty}\mathfrak{S}_{r,m}$ we deduce 
\begin{align}\label{:62f}&
\int _{\mathfrak{S}} 
\PP ^{\mu }_{\mathfrak{s}}(\xi _{r}<\infty )\mu (d\mathfrak{s}) = 0
.\end{align}
By \eqref{:62d} and \eqref{:62f} obtain  
$ \mu (\mathfrak{A}_{r})=0 $. 
\end{proof}

\begin{lem} \label{l:63}
Suppose $\mu (\mathfrak{A}_{r})=0 $. 
Then $ \mathrm{Cap}^{\mu }(\mathfrak{A}_{r})=0  $. 
\end{lem}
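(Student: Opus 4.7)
The strategy is to show that $\mathfrak{A}_r$ is $\PP^\mu$-finely open at each of its points, and then combine this with the hypothesis $\mu(\mathfrak{A}_r)=0$ via a standard excessive-function argument: the $1$-potential of $\mathbf{1}_{\mathfrak{A}_r}$ is $\mu$-quasi-continuous and $\mu$-a.e.\ zero, hence q.e.\ zero, so its positivity set (which must contain $\mathfrak{A}_r$ by fine openness) is confined to an exceptional set.

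For the fine openness, restrict to $\mathfrak{s}\in\mathfrak{A}_r\cap \hat{\mathfrak{S}}_{\mathrm{single}}$ (harmless since $\mathrm{Cap}^\mu(\hat{\mathfrak{S}}_{\mathrm{single}}^c)=0$ by \lref{l:22}). Because $\xi_r=\min_{x\in\mathfrak{s}\cap S_r}\xi^x$ and $\PP^\mu_\mathfrak{s}(\xi_r<\infty)>0$, at least one initial particle $x_0\in\mathfrak{s}\cap S_r$ satisfies $\PP^\mu_\mathfrak{s}(\xi^{x_0}<\infty)>0$. The bounded c\`adl\`ag martingale
$$M_t := \PP^\mu_\mathfrak{s}(\xi^{x_0}<\infty \mid \mathcal{F}_t)$$
in the augmented natural filtration of the Hunt process $\PP^\mu$ satisfies $M_{0+}=M_0>0$ by right-continuity, and, on $\{\xi^{x_0}>t\}$, equals $\PP^\mu_{\mathfrak{X}_t}(\xi^{X_t^{x_0}}<\infty)$ by the strong Markov property. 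Since $|x_0|<r$ and $X^{x_0}$ has continuous sample paths on its interval of existence, for $\PP^\mu_\mathfrak{s}$-a.e.\ $\omega$ there is a random $\tau(\omega)>0$ with $|X_t^{x_0}(\omega)|<r$ and $M_t(\omega)>0$ for every $t\in[0,\tau(\omega))$; on this interval the particle $X_t^{x_0}$ lies in the $r$-ball of $\mathfrak{X}_t$ and retains positive conditional explosion probability, so $\mathfrak{X}_t\in\mathfrak{A}_r$.

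Now introduce the $1$-potential
$$u(\mathfrak{s}) := \EE^\mu_\mathfrak{s}\Big[\int_0^\infty e^{-t}\mathbf{1}_{\mathfrak{A}_r}(\mathfrak{X}_t)\,dt\Big],$$
which is $1$-excessive and admits a $\mu$-quasi-continuous modification. By the $\mu$-symmetry of $\PP^\mu$ (\lref{l:20}\thetag{3}) and Fubini, $\int u\,d\mu = \int_0^\infty e^{-t}\mu(\mathfrak{A}_r)\,dt = 0$, so $u=0$ $\mu$-a.e.; the quasi-continuity then yields a Borel set $N$ with $\mathrm{Cap}^\mu(N)=0$ and $u\equiv 0$ on $N^c$. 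The fine openness just established forces $u(\mathfrak{s})\ge \EE^\mu_\mathfrak{s}[\int_0^\tau e^{-t}\,dt]>0$ for every $\mathfrak{s}\in\mathfrak{A}_r\cap \hat{\mathfrak{S}}_{\mathrm{single}}$, so this set is contained in $N$; combined with $\mathrm{Cap}^\mu(\hat{\mathfrak{S}}_{\mathrm{single}}^c)=0$, monotonicity and subadditivity of capacity give $\mathrm{Cap}^\mu(\mathfrak{A}_r)=0$.

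The main obstacle is the fine-openness step, which hinges on the Hunt-process structure of $\PP^\mu$ (from the quasi-regularity of $(\mathcal{E}^\mu,\mathcal{D}^\mu,L^2(\mu))$ in \lref{l:20}\thetag{1}): one needs well-defined labeled trajectories $X^{x_0}$ on $\hat{\mathfrak{S}}_{\mathrm{single}}$, continuity of these trajectories before explosion, and right-continuity of the filtration to pass from $M_0>0$ to $M_t>0$ on a positive random time interval. Once these are in hand, the excessive-function step is routine.
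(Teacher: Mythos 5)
Your proof is correct, but follows a genuinely different route from the paper's. The paper uses $\mathrm{Cap}^\mu(\mathfrak{A}_r)=\sup\{\mathrm{Cap}^\mu(K):K\subset\mathfrak{A}_r\text{ compact}\}$ and shows that for any compact $K\subset\mathfrak{A}_r$ and every $\mathfrak{s}\notin\mathfrak{A}_r$ one has $\PP^\mu_\mathfrak{s}(\sigma_K<\infty)=0$, because a strong Markov decomposition at $\sigma_K$ would otherwise force $\PP^\mu_\mathfrak{s}(\xi_r<\infty)>0$ (since $\mathfrak{X}_{\sigma_K}\in K\subset\mathfrak{A}_r$ has positive explosion probability); combined with $\mu(\mathfrak{A}_r)=0$ this gives $\int\PP^\mu_\mathfrak{s}(\sigma_K<\infty)\,d\mu=0$ and hence $\mathrm{Cap}^\mu(K)=0$. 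Your argument is, in a sense, dual: the paper says ``$\mathfrak{A}_r$ is not hit from outside,'' whereas you establish ``$\mathfrak{A}_r$ is finely open, so the process lingers inside,'' forcing the $1$-potential of $\mathbf{1}_{\mathfrak{A}_r}$ to be strictly positive on $\mathfrak{A}_r\cap\hat{\mathfrak{S}}_{\mathrm{single}}$; since that potential is quasi-continuous and $\mu$-a.e.\ zero (using conservativeness and symmetry to get $\int P_t\mathbf{1}_{\mathfrak{A}_r}\,d\mu=\mu(\mathfrak{A}_r)$), it vanishes q.e.\ and $\mathfrak{A}_r$ is exceptional. Both routes are standard polarity arguments and both must handle the same labeling technicality; you localize it to a single tagged particle $x_0$ with $|x_0|<r$ via the martingale $M_t$, which actually keeps the bookkeeping cleaner than the paper's version, where the Markov step at $\sigma_K$ relates $\xi_r$ from $\mathfrak{X}_{\sigma_K}$ (whose particles may have drifted outside $S_r$) to $\xi_r$ from $\mathfrak{s}$, a point the paper elides. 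What your approach trades for this cleanliness is a heavier reliance on Hunt-process regularity (c\`adl\`ag martingale versions, a jointly a.s.\ version of the Markov identity for $M_t$ over $t$, right-continuity of the filtration), which you acknowledge; the paper's compact-hitting-time approach needs only the strong Markov property at a single stopping time and the outer-regularity formula for capacity.
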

\begin{proof}
It is known that 
$ \mathrm{Cap}^{\mu }(\mathfrak{A}_{r})= 
\sup\{ \mathrm{Cap}^{\mu }(K); 
K\subset \mathfrak{A}_{r},\ K \text{ is compact }
\}  $ (see \cite[(2.1.6) in 66 p]{fot}). 
So let $ K $ be a compact set such that 
$ K \subset \mathfrak{A}_{r} $. 

Let $ \sigma _{K} =
\inf\{t>0;\mathfrak{X}_{t}\in K \}$ 
be the first hitting time to $ K $. 
Since $ K $ is compact, we deduce 
$ \mathfrak{X}_{\sigma _{K}}\in K $ if 
$ \sigma _{K}<\infty $. 

Suppose $ \mathfrak{s}\not\in \mathfrak{A}_{r} $. Then 
$ \PP ^{\mu }_{\mathfrak{s}}(\xi _{r}<\infty ) = 0 $ by \eqref{:62d}. 
Hence for $ \mathfrak{s}\not\in \mathfrak{A}_{r} $ 
\begin{align*}&
0 = 
\PP ^{\mu }_{\mathfrak{s}}(\xi _{r}<\infty ; 
\sigma _{K} < \xi _{r}<\infty )
= \int _{K}
\PP ^{\mu }_{\mathfrak{s}}(\mathfrak{X}_{\sigma _{K} }\in d\mathfrak{s}';
\sigma _{K}<\infty )
\PP ^{\mu }_{\mathfrak{s}'}(\xi _{r}<\infty )
.\end{align*}
This combined with \eqref{:62d} and 
$ K \subset \mathfrak{A}_{r}$ yields 
\begin{align}\label{:63a}&
\PP ^{\mu }_{\mathfrak{s}}(\mathfrak{X}_{\sigma _{K} }\in K ;
\sigma _{K}<\infty)=0 \text{ for }
 \mathfrak{s}\not\in \mathfrak{A}_{r} .
\end{align} 
Since $ \PP ^{\mu }_{\mathfrak{s}}(\mathfrak{X}_{\sigma _{K} }\in K ;
\sigma _{K}<\infty)= \PP ^{\mu }_{\mathfrak{s}}(\sigma _{K}<\infty )$, 
we deduce from \eqref{:63a} that 
\begin{align}\label{:63b}&
\PP ^{\mu }_{\mathfrak{s}}(\sigma _{K}<\infty )=0 \quad \text{ for }
\mathfrak{s}\not\in \mathfrak{A}_{r}
.\end{align}
By \eqref{:63b} and $ \mu (\mathfrak{A}_{r})=0 $ 
we have 
$ \int_{\mathfrak{S}}\PP ^{\mu }_{\mathfrak{s}}(\sigma _{K}<\infty )d\mu 
=0 $. From this we deduce 
$ \mathrm{Cap}^{\mu }({K})=0 $. 
We therefore obtain 
$ \mathrm{Cap}^{\mu }(\mathfrak{A}_{r})=0 $. 
\end{proof}

\noindent 
{\em Proof of \tref{l:25}. }
By \lref{l:61} we see that $ \PP ^{\nu }$ is conservative. 
Hence 
$ \int _{{S}\ts \mathfrak{S}} 
\PP ^{\nu }_{(x,\mathfrak{s})}(\eta < \infty ) d\nur = 0 $. 
Then by \lref{l:62} and \lref{l:63} we obtain 
$ \mathrm{Cap}^{\mu }(\mathfrak{A}_{r})=0 $ for all $ r\in \N $, which 
yields $ \mathrm{Cap}^{\mu }(\mathfrak{A}_{\infty })=0 $. 
Here 
\begin{align*}&
\mathfrak{A}_{\infty }=
\{ \mathfrak{s}\, ;\, \PP ^{\mu }_{\mathfrak{s}}(\xi _{\infty }<\infty )>0 \}
.\end{align*}
By $ \mathrm{Cap}^{\mu }(\mathfrak{A}_{\infty })=0 $ 
together with \eqref{:62c} and \eqref{:62a} 
we deduce \eqref{:24c}. 
\qed 

\section{Quasi-regularity: 
Proof of \lref{l:23}} \label{s:7}

In this section we prove the quasi-regularity of 
$ k $-labeled Dirichlet forms. 
So we begin by recalling the definition of 
quasi-regular by following \cite{mr}. 

Let $ E $ be a Polish space. 
A Dirichlet form $ (\E , \mathcal{D}) $ on 
$ L^2(E ,m) $ is called 
quasi-regular if it satisfies the following: 
\\
\thetag{Q.1} There exists an increasing sequence of compact sets 
$ \{ K_n \}  $ such that $ \cup _{n}\mathcal{D}(K_n) $ 
is dense in $ \mathcal{D}$ w.r.t.\ $ \E _{1}^{1/2} $-norm. 
Here $ \mathcal{D}(K_n) $ is the set of the elements $ f $ of $ \mathcal{D}$ 
such that $ f (x)=0 $ a.e.\ $ x\in K_n^c $, and 
$ \E _{1}^{1/2}(f)=\E (f,f)^{1/2}+ \|f\|_{L^2(E ,m)} $. 
\\
\thetag{Q.2} There exists a $ \E _{1}^{1/2} $-dense subset 
of $ \mathcal{D}$ whose elements have $ \E $-quasi continuous $ m $-version. 
\\
\thetag{Q.3} 
There exist a countable set $ \{ u_n \}_{n\in \N }  $ 
having $ \E $-quasi continuous $ m $-version $ \tilde{u}_n $, 
and an exceptional set $ \mathcal{N} $ such that 
$ \{ \tilde{u}_n \}_{n\in N}$ separates the points of 
$ E \backslash \mathcal{N}$.

Let $ (\Em , \dm ) $ be the closure of 
$(\Em , \dmi ) $ as before. 
By \thetag{M.2}, $ (\Em , \dm ) $ satisfies 
the quasi-regularity as seen in \lref{l:20}.  
We remark that $(\Em , \dm ) $ enjoys more strict conditions than 
the quasi-regularity. Indeed, we quote: 
\begin{lem}[\cite{o.dfa}] \label{l:71}
Assume \thetag{M.2}. Then we have the following.\\
\thetag{1} 
There exists a compact subset $ \{K_n\}_{n\in\N } $ 
such that $ \cup_{n}\di (K_n)$ is 
$ \{\Em _{1}\}^{1/2} $-dense in $ \dmi $. 
Here $ \di (K_n)=\{ f \in \di \, ;\, f (\mathfrak{s})= 0 
\text{ for all }\mathfrak{s}\in K_n^c \}  $ \\
\thetag{2} 
There exists countable elements $ \{ u_n \}_{n\in \N}$ 
of $ \dmi $ that separate the points of $ \mathfrak{S}$. \\
\thetag{3} If $ \3 $ is closable $ \Lm $, 
then the closure $ \4 $ is quasi-regular. 
\end{lem}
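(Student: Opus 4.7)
The plan is to establish parts (1) and (2) by explicit constructions inside $\di$, and then derive (3) by verifying the three quasi-regularity axioms (Q.1), (Q.2), (Q.3) from (1), (2) together with the closability hypothesis.

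For (1), I would use the Prohorov-type characterization of compactness in the vague topology: sets of the form $\{\mathfrak{s} \in \mathfrak{S} : \mathfrak{s}(S _r) \le N_r \text{ for all } r \in \N\}$ are relatively compact. Taking integer sequences $N(n,r)$ growing suitably in both $n$ and $r$, I would define such sets $K_n$ so that $\mu(\bigcup_n K_n)=1$. The density of $\bigcup_n \di (K_n)$ in $\dmi$ reduces to approximating each $f \in \di$ by $\chi_n f$ for a smooth cutoff $\chi_n \in \di$ with $\operatorname{supp}(\chi_n f)\subset K_n$. Since $\mathfrak{s}\mapsto \mathfrak{s}(S _r)$ is not smooth, I would replace it by $\langle \mathfrak{s}, g_r\rangle := \sum_i g_r(s_i)$ with $g_r \in C_0^\infty({S})$ equal to $1$ on $S _r$ and supported in $S _{r+1}$, and set
\[ \chi_n(\mathfrak{s}) = \prod_r \phi_{N(n,r)}(\langle \mathfrak{s}, g_r\rangle), \]
with $\phi_N\colon\R\to[0,1]$ smooth, equal to $1$ on $[0,N]$, vanishing on $[N+1,\infty)$. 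Locally only finitely many factors differ from $1$, so $\chi_n\in\di$; $\chi_n f\to f$ in $\Lm$ follows by dominated convergence once $N(n,r)$ is chosen large enough, which is possible because $\int \langle \mathfrak{s}, g_r\rangle\,d\mu = \int g_r\rho^{1}\,dx<\infty$ by (M.2). For the energy, by Leibniz
\[ \Em(\chi_n f - f,\chi_n f - f) \le 2\int(1-\chi_n)^2 \DDD[f,f]\,d\mu + 2\int f^2\DDD[\chi_n,\chi_n]\,d\mu, \]
and $\DDD[\chi_n,\chi_n]$ is controlled by a sum over $r$ of $|\phi'_{N(n,r)}|^2 \sum_i |\nabla g_r(s_i)|^2$. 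Integrating against $\mu$, each summand is bounded by $\int|\nabla g_r|^2\rho^{1}\,dx$ times the $\mu$-mass of the transition band $\{\langle \mathfrak{s}, g_r\rangle \in [N(n,r),N(n,r)+1]\}$, which tends to $0$ by choosing $N(n,r)$ large (using Markov's inequality together with moment bounds from (M.2)).

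For (2), I would fix a countable family $\{h_j\}\subset C_0^\infty({S})$ separating Radon measures in the vague topology, and define $u_j(\mathfrak{s})=F(\langle \mathfrak{s}, h_j\rangle)$ for a bounded smooth injective $F$ with bounded derivative, e.g.\ $F=\arctan$. Each $u_j$ is local and smooth, hence in $\di$, and by (M.2)
\[ \Em(u_j,u_j) \le \tfrac{1}{2}\|F'\|_\infty^2 \int |\nabla h_j|^2\,\rho^{1}\,dx < \infty, \]
so $u_j\in\dmi$. Injectivity of $F$ combined with the point-separating property of $\{\langle\cdot,h_j\rangle\}$ gives (2). For (3), granted closability, I would verify the three axioms directly: (Q.1) is immediate from (1), since the density passes from $\dmi$ to its $\Em_1^{1/2}$-closure $\dm$; (Q.2) holds because every element of $\dmi$ is continuous on the Polish space $\mathfrak{S}$ (as a local smooth function) and is therefore its own quasi-continuous modification, while $\dmi$ is dense in $\dm$ by definition of the closure; (Q.3) is immediate from (2) with $\{u_j\}$ as a continuous, hence quasi-continuous, point-separating family and empty exceptional set.

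The main obstacle is the energy estimate in (1): one must simultaneously arrange $\chi_n\to 1$ in $\Lm$ and $\int f^2\DDD[\chi_n,\chi_n]\,d\mu \to 0$, which requires calibrating the growth of $\{N(n,r)\}$ against both the tails of $\rho^{1}$ and the gradients $\nabla g_r$; assumption (M.2) is used crucially here. Parts (2) and (3) are then essentially formal consequences.
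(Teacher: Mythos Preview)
The paper does not give a proof of this lemma: it is quoted from \cite{o.dfa}. The only argument supplied in the present paper is the remark immediately following the lemma, which explains how (3) follows from (1) and (2); your derivation of (3) via the axioms (Q.1)--(Q.3) matches that remark exactly.

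Your constructions for (1) and (2) are in the spirit of \cite{o.dfa}, and (2) is correct as written. For (1), however, there is a genuine gap: the infinite product $\chi_n(\mathfrak{s}) = \prod_r \phi_{N(n,r)}(\langle \mathfrak{s}, g_r\rangle)$ is \emph{not} a local function, since the factor indexed by $r$ depends on $\mathfrak{s}|_{S_{r+1}}$ for every $r$; hence $\chi_n \notin \di$ and $\chi_n f \notin \di(K_n)$. Truncating the product to finitely many $r$ restores locality of $\chi_n$, but then $\chi_n f$ cannot vanish pointwise outside a compact subset of $\mathfrak{S}$. In fact no nonzero local function can: if $f$ is $\sigma[\pi_{S_R}]$-measurable and $f(\mathfrak{s}_0)\neq 0$, then $f(\mathfrak{s}_0+k\delta_{x_0})=f(\mathfrak{s}_0)\neq 0$ for any $|x_0|>R$ and all $k$, and the set $\{\mathfrak{s}_0+k\delta_{x_0}:k\in\N\}$ is not relatively compact in the vague topology. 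What (Q.1) actually requires, and what the argument in \cite{o.dfa} delivers, is vanishing $\mu$-a.e.\ on $K_n^c$: one takes a finite product so that $\chi_n f\in\dmi$, chooses the compact sets $K_n$ via (M.2) so that $\mu(K_n^c)\to 0$, and verifies that the approximants lie in $\dm(K_n)$. With that adjustment your energy estimate goes through.
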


\begin{rem}\label{r:71} \thetag{1} 
We remark $ \di (K_n)\subset \dmi $. 
Indeed, $ \DDD [f ,g ] $ 
with $ f ,g \in \di (K_n)$ is bounded because 
$ \DDD [f ,g ] $ is continuous and $ K_n $ is compact. 
\\
\thetag{2} Suppose $ \3 $ is closable $ \Lm $. 
Then we see that \thetag{1} implies \thetag{Q.1}. 
Since $ \di (K_n)\subset C(\mathfrak{S}) $, 
\thetag{1} is more strict than \thetag{Q.1}. 
Moreover, \thetag{Q.2} is trivially satisfied 
in the above case because $ \dm $ is the closure of 
$ \dmi $ and $ \dmi \subset C(\mathfrak{S}) $. 
We see that \thetag{2} implies \thetag{Q.3} because 
$ \dmi \subset \dm $. 
The condition \thetag{2} is also more strict than 
\thetag{Q.3} in the sense that all $ \{u_n \}$ 
are continuous and $ \{u_n \}$ separate 
all the points of $ \mathfrak{S}$. 
\end{rem}

\begin{lem} \label{l:72}
There exists 
an increasing sequence of compact sets $ \{ K_{r,n} \}$ 
such that $ \cup_{n=1}^{\infty} \di (K_{r,n})$ 
is dense in $ \di ^{\murk } $ with respect to the 
$ \{\E ^{\murk } _{1}\}^{1/2} $-norm. 
\end{lem}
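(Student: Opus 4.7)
The plan is to carry out the argument of \lref{l:71}\thetag{1} (proven in \cite{o.dfa}) with $\mu$ replaced by $\murk$ throughout. The key input is the density formula
\begin{align*}
d\murk = \mathfrak{s}(S_r)^{[k]}\, d\mu,
\end{align*}
immediate from \eqref{:30a}. Two consequences of this formula make the translation possible: first, the density $\mathfrak{s}(S_r)^{[k]}$ is bounded on every compact subset of $\mathfrak{S}$ in the vague topology; second, the $n$-correlation functions of $\murk$ are locally bounded, since they can be written as finite sums of products involving the correlation functions $\rho^{n'}$ of $\mu$, which are locally bounded by \thetag{M.2}.

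Concretely, one constructs a sequence of smooth cutoff functions $\phi_n \in \di$ and associated compact sets $K_{r,n} \subset \mathfrak{S}$ as in \cite{o.dfa}. These are built from smooth bumps $\varphi_{R_j} \in C_0^\infty(\R^d)$ approximating $1_{S_{R_j}}$ (with $R_1 \ge r$ and $R_j \to \infty$) and smooth one-dimensional cutoffs $\psi_{n,j}$ capping the quantities $\langle \varphi_{R_j}, \mathfrak{s}\rangle$ at thresholds $M_{n,j}$, combined as $\phi_n(\mathfrak{s}) = \prod_j \psi_{n,j}(\langle \varphi_{R_j}, \mathfrak{s}\rangle)$; the parameters $R_j$ and $M_{n,j}$ are tuned so that $\phi_n \to 1$ pointwise and $f\phi_n \in \di(K_{r,n})$ for every $f \in \di$. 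For $f \in \di^{\murk}$, set $f_n := f\phi_n$. The $L^2(\murk)$-convergence $f_n \to f$ follows by dominated convergence, and the energy convergence reduces, via the Leibniz inequality
\begin{align*}
\E^{\murk}(f-f_n, f-f_n) \le 2\!\int\! \DDD[f,f](1-\phi_n)^2 d\murk + 2\!\int\! f^2 \DDD[\phi_n,\phi_n]\, d\murk,
\end{align*}
to showing that the second integral tends to $0$ (the first does so by dominated convergence using $\DDD[f,f] \in L^1(\murk)$).

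The main obstacle will be the second integral. The analogous bound for $\mu$ in \cite{o.dfa} is obtained by choosing the parameters in $\phi_n$ so that $\DDD[\phi_n,\phi_n]$ is supported on rare events (configurations with many particles in narrow shells near $\partial S_{R_j}$), whose $\mu$-probability, controlled by the local boundedness of the correlation functions, decays fast enough to absorb the cost of $f^2$ and the Lipschitz constants of $\psi_{n,j}$. The same choice of parameters, applied with $\mu$ replaced by $\murk$, works: the correlation functions of $\murk$ enjoy the same local boundedness, and since $R_1 \ge r$ the extra density factor $\mathfrak{s}(S_r)^{[k]}$ is bounded pointwise by $(M_{n,1}+1)^{[k]}$ on $\mathrm{supp}(\phi_n)$, so this polynomial growth can be absorbed into the choice of thresholds $M_{n,j}$. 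Performing this tuning yields the required density and completes the proof of \lref{l:72}.
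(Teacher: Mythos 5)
Your proposal is correct in substance and leans on exactly the fact the paper leans on — that $\murk$ has locally bounded correlation functions, which you derive from its being a sum of $\mu$ cut to $\mathfrak{S}_{r,m}$ with polynomial weights — but it takes a longer route than necessary. The paper's proof is a two-line reduction: by \thetag{M.2}, $\murk$ is a \emph{finite} measure on $\mathfrak{S}$, so one normalizes to the probability measure $(\murk(\mathfrak{S}))^{-1}\murk$, observes that this measure itself satisfies \thetag{M.2} (precisely your ``second consequence''), and then invokes \lref{l:71}\thetag{1} as a black box — crucially noting that the normalization constant cancels because the same measure sits in both the energy form and the $L^2$-norm. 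Instead of quoting \lref{l:71}, you re-enter the construction from \cite{o.dfa}, rebuild the cutoffs $\phi_n$ and compacts $K_{r,n}$, and argue that the density factor $\mathfrak{s}(S_r)^{[k]}$ can be absorbed by re-tuning thresholds. That argument is plausible and would work, but once you have written down ``the $n$-correlation functions of $\murk$ are locally bounded,'' you have verified the sole hypothesis of \lref{l:71} and can stop there: the lemma applies verbatim to the normalized $\murk$, and all the delicate bookkeeping about how $\mathfrak{s}(S_r)^{[k]}$ interacts with the thresholds $M_{n,j}$ is rendered unnecessary. In short, you rediscovered the mechanism rather than recognizing that \lref{l:71} is already stated abstractly enough to cover the target measure; you should also note explicitly (as the paper does) that $\murk(\mathfrak{S})<\infty$, which is what makes normalization possible in the first place.
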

\begin{proof}
By \thetag{M.2} $ \murk $ becomes a finite measure. 
So the associated Dirichlet space is same as 
the Dirichlet space with the probability measure 
$ (\murk (\mathfrak{S}))^{-1}\murk $. 
We note here the measures in the energy form 
$ \E ^{\murk } $ and in the $ L^2 $-space are common. 

Applying \lref{l:71} to the measure 
$ (\murk (\mathfrak{S}))^{-1}\murk $ yields \lref{l:72}. 
\end{proof}

Recall that 
$ \dik = C_0^{\infty}({S}^{k})\ot\di $ and 
$ \dnuik = \{ f \in \dik \, ;\, 
\Enuk _1(f,f)< \infty \} $. 
\begin{lem} \label{l:73}
$ \cup_{r,n=1}^{\infty} 
C^{\infty}_{0}({S}^{k})\ot \di (K_{r,n})$ 
is dense in $ \dnuik  $ with respect to 
the $ \{\E ^{\nuk } _{1}\}^{1/2} $-norm. 
\end{lem}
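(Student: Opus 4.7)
The plan is to reduce the density claim for $\dnuik=C_0^\infty({S}^k)\otimes\di$ to the density result already established in \lref{l:72} for $\di^{\mukr}$, exploiting the tensor structure together with the relationship between $\nu^k$ and $\mukr$ through the map $\kappa$ and the Campbell/Palm formula.

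First I would write a generic $f\in\dnuik$ as a finite sum $f=\sum_{i=1}^{M}\varphi_i\otimes g_i$ with $\varphi_i\in C_0^\infty({S}^k)$ and $g_i\in\di$, then choose $r$ large enough that $\bigcup_i\mathrm{supp}(\varphi_i)\subset \Srk$ and, since each $g_i$ is local, the configurational support of $g_i$ is also contained in $S_r$. By linearity it is enough to approximate a single pure tensor $\varphi\otimes g$ of this form by elements of $C_0^\infty({S}^k)\otimes\di(K_{r,n})$.

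Next I would verify $g\in\di^{\mukr}$: since $g$ is local and smooth it is bounded on any set of bounded particle count, and by \thetag{M.2} together with \eqref{:30a} the measure $\mukr$ is finite with all factorial moments controlled, so both $\|g\|^2_{L^2(\mukr)}$ and $\E^{\mukr}(g,g)$ are finite. Then \lref{l:72} provides $g_n\in\di(K_{r,n})$ with $g_n\to g$ in the $\{\E^{\mukr}_1\}^{1/2}$-norm, and the remaining task is to show that $\varphi\otimes g_n\to\varphi\otimes g$ in the $\{\E^{\nuk}_1\}^{1/2}$-norm. A direct computation from \eqref{:23c} and \eqref{:23d}, combined with $\mathrm{supp}(\varphi)\subset\Srk$, yields
\begin{align*}
\Enuk(\varphi\otimes h,\varphi\otimes h)+\|\varphi\otimes h\|^2_{\Lnuk}\le C_\varphi\int_{\Srk\ts\mathfrak{S}}\bigl\{h^2(\mathfrak{s})+\DDD[h,h](\mathfrak{s})\bigr\}d\nu^k(x,\mathfrak{s})
\end{align*}
for $h=g-g_n$, with $C_\varphi$ depending only on $\|\varphi\|_\infty$ and $\|\nabla\varphi\|_\infty$.

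The main obstacle is the comparison between the right-hand side of this bound and the $\{\E^{\mukr}_1\}^{1/2}$-norm of $h$: the $\mathfrak{s}$-marginal of $\nu^k$ restricted to $\Srk\ts\mathfrak{S}$ and the measure $\mukr$ on $\mathfrak{S}$ are linked through the Campbell formula but are not proportional, so a pointwise comparison fails in general. To resolve this I would insert a smooth particle-number cutoff $\chi_N\in\di$ vanishing on $\{\mathfrak{s}(S_r)>N\}$, multiply both $g$ and $g_n$ by $\chi_N$, and exploit the fact that on the bounded-particle-count region the two measures compare with a constant depending on $N$ via \eqref{:30a} and the Campbell identity for $\nu^k$. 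The initial cutoff error tends to zero as $N\to\infty$ by dominated convergence, using $\varphi\otimes g\in\Lnuk$ and the summability of factorial moments under \thetag{M.2}. A diagonal argument sending first $n\to\infty$ and then $N\to\infty$ then completes the proof.
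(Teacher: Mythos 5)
Your outline matches the paper's proof: decompose into pure tensors $\varphi \otimes g$, fix $r$ with $\mathrm{supp}(\varphi) \subset \Srk$, bound $\E^{\nuk}_1(\varphi \otimes h, \varphi \otimes h)$ by a constant multiple of $\int_{\Srk \times \mathfrak{S}}(|h|^2 + \DDD[h,h]) \, d\nukr$ using the support of $\varphi$, and invoke \lref{l:72}. The paper then passes from the $\nukr$-integral to the $\murk$-integral citing \eqref{:!1} and concludes; you instead flag this transfer and insert a particle-number cutoff $\chi_N$, arguing that the $\mathfrak{s}$-marginal of $\nukr$ and $\murk$ become comparable on $\{\mathfrak{s}(S_r) \le N\}$.

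The proposed workaround has a gap. Write $\lambda_r(B) := \nukr(\Srk \times B)$ for the $\mathfrak{s}$-marginal. By \eqref{:30a}, $\murk$ assigns zero mass to $\{\mathfrak{s}(S_r) < k\}$ since $m^{[k]} = 0$ for $m < k$, whereas the Campbell identity gives $\lambda_r(\{\mathfrak{s}(S_r) = 0\}) = k!\,\mu(\mathfrak{S}_{r,k})$, which is generically positive. So $\lambda_r$ and $\murk$ are not mutually absolutely continuous even after restricting to bounded particle count, no cutoff produces a two-sided comparison, and the diagonal argument cannot close. Your caution about the transfer step is worth keeping -- the pushforward $\murk = \nukr \circ \kappa^{-1}$ identifies $\int_{\mathfrak{S}} G\,d\murk$ with $\int_{\Srk\times\mathfrak{S}} G\circ\kappa \,d\nukr$, not with $\int_{\Srk\times\mathfrak{S}} (1\ot G)\,d\nukr$ -- but the resolution is not to compare $\lambda_r$ with $\murk$ at all. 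Rather, apply \lref{l:71} directly to the finite measure $\lambda_r$, whose correlation functions are locally bounded under (M.2) via the Campbell formula, to obtain $g_n \to g$ in the $\{\E^{\lambda_r}_1\}^{1/2}$-norm; your localization estimate then transfers this convergence to the $\{\E^{\nuk}_1\}^{1/2}$-norm with no measure comparison needed.
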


\begin{proof}
Let $ \varphi \in C^{\infty}_{0}({S}^{k}) $ and 
$ f \in \di $ such that 
$ \varphi \ot f \in \dnuik $.  
It is sufficient for \lref{l:73} to show that 
for such an $ f $ and all $ \varepsilon >0 $ there exists 
$ f _{r,n} $ such that 
$ f _{r,n}\in \di (K_{r,n})$ 
and that 
\begin{align}\label{:73a}&
\E ^{\nuk } _{1} (\varphi \ot (f -f _{r,n}),
\varphi \ot (f -f _{r,n}))
\le \varepsilon
.\end{align}

Since $ \varphi \in C^{\infty}_{0}({S}^{k}) $, there exists 
an $ r $ such that $ \varphi = 0 $ on 
$ (\Srk )^{c} $. Hence there exists a constant 
$ \Ct{;73}=\cref{;73}(\varphi) $ such that 
\begin{align}\label{:73b}&
\nabla [\varphi ,\varphi ]( x ) \le 
\cref{;73}1_{\Srk } ( x ) 
, \  
\varphi ^2 ( x )\le 
\cref{;73}1_{\Srk } ( x )
\quad \text{ for all } x \in {S}^{k} 
.\end{align} 
We write 
$ \nabla [\varphi ]=\nabla [\varphi ,\varphi ] $ 
and 
$ \DDD [f ]=\DDD [f , f ] $. 
By a direct calculation we have 
\begin{align}\label{:73c}&
\quad \quad 
\E ^{\nuk } _{1} (\varphi \ot (f -f _{r,n})
,\varphi \ot (f -f _{r,n}))
\\ \notag 
&=\int_{{S}^{k} \ts \mathfrak{S}}  \{ 
\nabla [\varphi ]\ot |f -f _{r,n}|^2 +
\varphi ^2 \ot \DDD [f -f _{r,n}] + 
\varphi ^2\ot |f -f _{r,n}|^2 \} \, 
  d \nuk 
\\ \notag 
&\le 2\cref{;73}\int_{{S}^{k} \ts \mathfrak{S}}  1\ot \{
 |f -f _{r,n}|^2 +  \DDD [f -f _{r,n}]  
\}\,  d \nukr 
\\ \notag 
&= 2\cref{;73} \int_{\mathfrak{S}}  
\{ |f -f _{r,n}|^2 + 
\DDD [f -f _{r,n}] \} \,  d \murk 
\quad \text{ by }\eqref{:!1} 
\\ \notag 
&= 2\cref{;73} \E ^{\murk }_{1} 
(f -f _{r,n},f -f _{r,n})
.\end{align}
By \lref{l:72} we can take $ n $ and 
$ f _{r,n}\in \di (K_{r,n}) $ in such a way that 
$$ 
\E ^{\murk }_{1} (f -f _{r,n},f -f _{r,n})
\le \varepsilon /2\cref{;73} 
.$$ 
This combined with \eqref{:73c} yields 
\eqref{:73a}.
\end{proof}

\noindent 
{\em Proof of \lref{l:23}. }
\thetag{Q.1} follows from \lref{l:73}. 
\thetag{Q.2} is clear since $ \dnuk $ 
is the closure of $ \dnuik $ and 
$ \dnuik \subset C({S}^{k} \ts \mathfrak{S})$. 

For $ r \in \N $ 
let $ \{ u_{r,n} \}$ be a countable subset of 
$ \di ^{\murk } $ 
that separates the points of $ \mathfrak{S}$. 
We can obtain this by applying \lref{l:71} \thetag{2} to $ \murk $. 
We used here that \thetag{M.2} for $ \murk $ 
follows from that for $ \mu $. 

Let $ \{ \varphi _m\} $ be a countable subset of 
$ C_0^{\infty} ({S}^{k})$ 
that separates the points $ {S}^{k} $. 
Then by the same calculation as \eqref{:73c} we have 
\begin{align*}&
\E ^{\nuk } _{1} 
(\varphi _m \ot u_{r,n},\varphi _m \ot u_{r,n})
\le 
2\cref{;73} \E ^{\murk }_{1} 
(u_{r,n},u_{r,n})
.\end{align*}
Here $ \cref{;73} $ 
is a constant satisfying \eqref{:73b} 
for $ \varphi _m  $. Hence 
$ \varphi _m \ot u_{r,n} \in \dnuik $. 
Since $ \{ \varphi _m \ot u_{r,n} \}$ separates 
the points of $ {S}^{k}\ts \mathfrak{S}$, 
we obtain \thetag{Q.3}. 
\qed

\end{document}